\newcommand\reallywidehat[1]{%
\savestack{\tmpbox}{\stretchto{%
  \scaleto{%
    \scalerel*[\widthof{\ensuremath{#1}}]{\kern-.6pt\bigwedge\kern-.6pt}%
    {\rule[-\textheight/2]{1ex}{\textheight}}%WIDTH-LIMITED BIG WEDGE
  }{\textheight}% 
}{0.5ex}}%
\stackon[1pt]{#1}{\tmpbox}%
}
\tikzset{
  symbol/.style={
    draw=none,
    every to/.append style={
      edge node={node [sloped, allow upside down, auto=false]{$#1$}}}
  }
} %%used to allow connections in tikz-cd besides arrows
\newcommand{\Z}{\mathbb{Z}}
\newcommand{\R}{\mathbb{R}}
\newcommand{\BC}{\mathbb{C}}
\newcommand{\A}{{\mathbb A}}
\newcommand{\inv}{^{-1}}
\newcommand{\SL}{\mathrm{SL}}
\newcommand{\GL}{\mathrm{GL}}
\newcommand{\SO}{\mathrm{SO}}
\newcommand{\Sp}{\mathrm{Sp}}
\newcommand{\St}{\mathrm{St}}
\newcommand{\Rep}{\underline{\mathrm{Rep}}}
\newcommand{\half}[1]{\frac{#1}{2}}
\newcommand{\comment}[1]{}
\newcommand{\EE}{\mathcal{E}}
\newcommand{\FF}{\mathcal{F}}
\newtheorem{thm}{Theorem}[section]
\newtheorem{cor}[thm]{Corollary}
\newtheorem{lemma}[thm]{Lemma}
\newtheorem{prop}[thm]{Proposition}
\newtheorem {conj}[thm]{Conjecture}
\newtheorem {ques/conj}[thm]{Question/Conjecture}
\newtheorem{defn}[thm]{Definition}
\newtheorem{remark}[thm]{Remark}
\newtheorem{algo}[thm]{Algorithm}
\DeclareMathOperator{\supp}{supp}
\numberwithin{equation}{section}
\let\oldbullet\bullet
\renewcommand{\bullet}{{\vcenter{\hbox{\tiny$\oldbullet$}}}}
\begin{document}
\renewcommand{\theequation}{\arabic{equation}}
\numberwithin{equation}{section}

\title[Enhanced Shahidi conjecture]
{On the enhanced Shahidi conjecture and global applications}

\author{Alexander Hazeltine
}
\address{Department of Mathematics\\
University of Michigan\\
Ann Arbor, MI 48109, USA}
\email{ahazelti@umich.edu}

\author{Baiying Liu}
\address{Department of Mathematics\\
Purdue University\\
West Lafayette, IN, 47907, USA}
\email{liu2053@purdue.edu}

\author{Chi-Heng Lo}
\address{Department of Mathematics\\
Purdue University\\
West Lafayette, IN, 47907, USA}
\email{lo93@purdue.edu}

%    General info
\subjclass[2020]{Primary 11F70, 22E50; Secondary 11F85, 22E55}

%\date{\today}

%\dedicatory{}

\keywords{Local Arthur Packets, Local Arthur Parameters, Intersection of Local Arthur Packets, Enhanced Shahidi Conjecture, Clozel Conjecture}

\thanks{The research of the second-named author has been partially supported by the NSF Grants DMS-1702218, DMS-1848058}

\begin{abstract}
In this paper, applying the intersection theory of local Arthur packets, for symplectic and split odd special orthogonal groups $\mathrm{G}_n$, we give the first complete proof of the enhanced Shahidi conjecture on generic representations in local Arthur packets. We also classify unramified representations of Arthur type for $\mathrm{G}_n$, and show that they lie in exactly one local Arthur packet, which is anti-generic. Then, we discuss the global applications of these results.
\end{abstract}

\maketitle

%\tableofcontents

\section{Introduction}

Let $k$ be a number field and $\mathbb{A}_k$ be the ring of adeles. 
Let $\mathrm{G}$ be a connected reductive group defined over $k$.
A main theme in the theory of automorphic forms is to study the discrete spectrum of $\mathrm{G}(\mathbb{A}_k)$. In \cite[\S 6, \S 8]{Art89}, Arthur conjectured that there are theories of local and global Arthur packets for $\mathrm{G}$, parameterized by local and global Arthur parameters, such that the discrete spectrum of $\mathrm{G}(\mathbb{A}_k)$ is a union of global Arthur packets.

There are many conjectures related to the local Arthur packets. In this paper, 
applying the results in \cite{HLL22} on the intersection theory of local Arthur packets, 
we focus on the following problems for symplectic groups and quasi-split odd special orthogonal groups.
\begin{enumerate}
    \item The enhanced Shahidi conjecture on generic representations in local Arthur packets for quasi-split connected reductive groups and its global applications. 
    \item Classification of unramified representations of Arthur type and its global applications.  
\end{enumerate}

\subsection{Global and local Arthur packets}
To proceed with the discussion, let us recall the notation for global and local Arthur packets. Let $L_k$ be the hypothetical Langlands group. The set of global Arthur parameters $\Psi(\mathrm{G}(\A_k))$ consists of admissible homomorphisms
\begin{equation*}
    \psi: L_k \times \mathrm{SL}_2^A(\mathbb{C}) \rightarrow {}^L\mathrm{G}(k),
\end{equation*}
where ${}^L\mathrm{G}(k)=\widehat{\mathrm{G}}(\mathbb{C}) \rtimes W_k$ is the global Langlands $L$-group of $\mathrm{G}$, $\widehat{\mathrm{G}}(\mathbb{C})$ is the Langlands dual group of $\mathrm{G}$ and $W_k$ is the Weil group of $k$.  
Denote the conjectural global Arthur packet corresponding to a global Arthur parameter $\psi$ by $\Pi_{\psi}$. For classical groups, the precise description of $\Psi(\mathrm{G}(\A_k))$ without referring to the hypothetical Langlands group is given in \S \ref{sec global Arthur packets}.

For each local place $v$ of $k$, the set of local Arthur parameters $\Psi^+(\mathrm{G}(k_v))$ obtained from restricting global Arthur parameters to 
$$L_{k_v} = \begin{cases}
   W_{k_v}, & v \text{ Archimedean,}\\
W_{k_v} \times \mathrm{SL}_2^D(\mathbb{C}), & v \text{ non-Archimedean,}
\end{cases}$$
where $W_{k_v}$ is the Weil group of $k_v$. Thus, $\Psi^+(\mathrm{G}(k_v))$ consists of admissible homomorphisms
\begin{equation*}
    \psi_v: L_{k_v} \times \mathrm{SL}_2^A(\mathbb{C}) \rightarrow {}^L\mathrm{G}(k_v),
\end{equation*}
where ${}^L\mathrm{G}(k_v)=\widehat{\mathrm{G}}(\mathbb{C}) \rtimes W_{k_v}$ is the local Langlands $L$-group of $\mathrm{G}$. If we write 
\begin{align}\label{eq psi_v intro}
    \psi_v |_{W_{k_v}} = \bigoplus_i \phi_i|\cdot|^{x_i},
\end{align}
where
$\phi_i$ is an irreducible representation of $W_{k_v}$ with bounded image consisting of semi-simple elements and $x_i \in \R$, then $|x_i|<\half{1}$. Let $\Psi(\mathrm{G}(k_v))$ be the subset of local Arthur parameters with $x_i=0$ for all $i$ and let $\Pi_{\psi_v}$ denote the conjectural local Arthur packet corresponding to  $\psi_v$.

We need more notations for local Arthur parameters. A local Arthur parameter $\psi_v$ is called {\it generic} if $\psi_v|_{\mathrm{SL}_2^A(\mathbb{C})}$ is trivial; and is called {\it tempered} if additionally $x_i=0$ for all $i$ in \eqref{eq psi_v intro}. For each local Arthur parameter $\psi_v$, Arthur associated a local $L$-parameter $\phi_{\psi_v}$ as follows
\begin{equation}\label{apequ1}
\phi_{\psi_v}(w, x) = \psi_v\left(w, x, \begin{pmatrix}
        |w|^{\frac{1}{2}} & 0 \\
        0 & |w|^{-\frac{1}{2}}\\
\end{pmatrix}\right).
\end{equation}
An irreducible admissible representation of $\mathrm{G}(k_v)$ is of {\it Arthur type} if it lies in $\Pi_{\psi_v}$ for some $\psi_v \in \Psi^+(\mathrm{G}(k_v))$.

Arthur's conjectures state that the discrete spectrum of automorphic representations of $\mathrm{G}(\A_k)$ is partitioned into global Arthur packets. Given any global Arthur parameter $\psi$, we have its localization $\psi_v \in \Psi^+(\mathrm{G}(k_v))$ via restriction to $L_{k_v}$ for any place $v$. Then, for any $\pi \in \Pi_{\psi}$, with $\pi=\otimes_v \pi_v$, we have $\pi_v \in \Pi_{\psi_v}$. In \cite{Art13}, Arthur proved these global conjectures and the existence of local Arthur packets for symplectic and quasi-split special orthogonal groups.

\subsection{The enhanced Shahidi conjecture}\label{sec: enhanced Shahidi conjecture intro}

Let $\mathrm{G}$ be a quasi-split connected reductive group defined over a non-Archimedean local field $k_v$ and let $G=\mathrm{G}(k_v)$. 
The well-known Shahidi conjecture states that tempered local $L$-packets of $G$ have generic members (\cite[Conjecture 9.4]{Sha90}). Recently, Shahidi made an enhanced conjecture as follows.

\begin{conj}[{\cite[Conjecture 1.5]{LS22}}, Enhanced Shahidi Conjecture]\label{Enhanced Shahidi conjecture intro}
For any quasi-split connected 
 reductive group $G$,
assume that there is a theory of local Arthur packets for $G$ as conjectured in \cite[\S 6]{Art89}. 
\begin{enumerate}
    \item For any local Arthur parameter $\psi \in \Psi(G)$, the local Arthur packet $\Pi_{\psi}$ is tempered if and only if it has a generic member.
    \item For any local Arthur parameter $\psi \in \Psi^+(G)$, the local Arthur packet $\Pi_{\psi}$ is generic if and only if it has a generic member.
\end{enumerate}
\end{conj}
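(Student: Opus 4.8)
The plan is to dispose of the two ``only if'' implications using known results, and to reduce both ``if'' implications to one rigidity statement whose proof is powered by the intersection theory of \cite{HLL22}. For $\psi\in\Psi^+(\mathrm{G}_n)$ let $M(\phi_\psi)$ be the standard module attached to $\phi_\psi$ and $L(\phi_\psi)$ its Langlands quotient; recall that $L(\phi_\psi)\in\Pi_\psi$. If $\psi$ is tempered, then $\Pi_\psi=\Pi_{\phi_\psi}$ is a tempered $L$-packet, which has a generic member by the Shahidi conjecture (known for $\mathrm{G}_n$), giving the ``only if'' of (1). If $\phi_\psi$ is a generic parameter, then by the standard module conjecture for $\mathrm{G}_n$ the module $M(\phi_\psi)$ is irreducible; its tempered support carries a generic tempered $L$-packet, so $M(\phi_\psi)$ is induced from a generic tempered representation and is thus generic, whence $L(\phi_\psi)=M(\phi_\psi)$ is the generic member of $\Pi_{\phi_\psi}$ and lies in $\Pi_\psi$; this gives the ``only if'' of (2).

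\textbf{Reduction of the ``if'' directions.} It suffices to prove the following claim: \emph{if $\pi\in\Pi_\psi$ is generic, then $\pi=L(\phi_\psi)$ and $\phi_\psi$ is a generic parameter.} This is exactly the ``if'' of (2). For the ``if'' of (1), note that for $\psi\in\Psi(\mathrm{G}_n)$ the packet $\Pi_\psi$ is tempered precisely when $\psi$ is tempered, i.e.\ when $\psi|_{\mathrm{SL}_2^A}$ is trivial; and when $\psi$ is bounded but $\psi|_{\mathrm{SL}_2^A}$ is nontrivial, a direct computation with the adjoint $L$-function shows that $L(s,\phi_\psi,\mathrm{Ad})$ has a pole at $s=1$, so $\phi_\psi$ is not generic, and by the claim $\Pi_\psi$ has no generic member. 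Hence, for $\psi\in\Psi(\mathrm{G}_n)$, the packet $\Pi_\psi$ has a generic member only if $\psi$ is tempered.

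\textbf{The claim and the main obstacle.} Let $\pi\in\Pi_\psi$ be generic. Then $\pi$ lies in its $L$-packet $\Pi_{\phi_\pi}$, which is therefore a generic $L$-packet, so $\phi_\pi$ is a generic parameter and, by the standard module conjecture, $\pi=M(\phi_\pi)=L(\phi_\pi)$ is an irreducible standard module; it remains to show $\phi_\pi=\phi_\psi$. Here I would combine Moeglin's explicit construction of $\Pi_\psi$ with the intersection-theoretic results of \cite{HLL22} to establish that $L(\phi_\psi)$ is the unique member of $\Pi_\psi$ whose Langlands parameter is maximal, in the closure order, among $\{\phi_{\pi'}:\pi'\in\Pi_\psi\}$, and that every other member of $\Pi_\psi$ has a strictly smaller parameter, whose standard module is consequently reducible. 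Since $M(\phi_\pi)$ is irreducible, $\pi$ must be $L(\phi_\psi)$, and so $\phi_\pi=\phi_\psi$ is generic. The main obstacle is precisely this maximality/rigidity input: the inclusion $L(\phi_\psi)\in\Pi_\psi$ and the bound $\phi_{\pi'}\preceq\phi_\psi$ for $\pi'\in\Pi_\psi$ do not by themselves preclude a strictly smaller parameter from having an irreducible standard module, so one must exploit the finer information in \cite{HLL22} — describing exactly which representations occur in which local Arthur packets, together with the associated Moeglin data — to force that, within $\Pi_\psi$, irreducibility of the standard module singles out $L(\phi_\psi)$, equivalently that $\phi_\psi$ is generic. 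I expect this to go through a reduction to the good-parity case followed by a case analysis over the non-tempered summands of $\psi$, and to be the most technical part; the remaining ingredients (the Shahidi and standard module conjectures for $\mathrm{G}_n$, and the adjoint $L$-function criterion for genericity of parameters) are by now standard.
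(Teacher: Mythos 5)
Your outer scaffolding (the two easy directions, the reduction to good parity, the adjoint $L$-function criterion) is reasonable, but the step you yourself flag as the main obstacle is not just unproven — the rigidity statement you propose to extract from \cite{HLL22} is false, so the route as formulated cannot be completed. You want: within $\Pi_\psi$, irreducibility of the standard module singles out $L(\phi_\psi)$, i.e.\ every other member has reducible standard module (equivalently, a member with generic $L$-parameter and irreducible standard module must satisfy $\phi_\pi=\phi_\psi$). Non-tempered local Arthur packets of $G_n$ routinely contain \emph{tempered} members, and these are counterexamples. Already for $G_2=\Sp_4(F)$, take the Saito--Kurokawa type parameter $\psi=\sigma\otimes S_1\otimes S_2\oplus 1\otimes S_1\otimes S_1$ with $\sigma$ a $2$-dimensional irreducible symplectic representation of $W_F$; then $\Pi_\psi=\{\pi^+,\pi^-\}$, where $\pi^+=L(\phi_\psi)$ and $\pi^-=\pi(\sigma\otimes S_2\oplus 1\otimes S_1,\varepsilon)$ with $\varepsilon$ nontrivial is a discrete series representation. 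This $\pi^-$ is its own standard module (hence irreducible), and its parameter $\phi_{\pi^-}=\sigma\otimes S_2\oplus 1$ is tempered, hence a generic parameter in your adjoint $L$-function sense — yet $\pi^-\neq L(\phi_\psi)$ and $\phi_{\pi^-}\neq\phi_\psi$. The point is that after invoking the standard module conjecture and the Gross--Prasad--Rallis criterion you retain only ``generic parameter $+$ irreducible standard module'' from the genericity of $\pi$, and that information cannot distinguish a generic member from a non-generic tempered member of the packet such as $\pi^-$; the genericity of the representation itself must enter the argument in an essential way, and no amount of ``maximality of $\phi_\psi$ in the closure order'' plus standard-module reducibility can supply it.

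For comparison, the paper never passes through standard modules. In the good parity case it takes a generic $\sigma\in\Pi_\psi$, writes its $L$-data via the classification of generic representations (the $G_n$-generic condition on the segments, Theorem \ref{thm generic sequence}), writes $\sigma=\pi(\EE)$ in Atobe's reformulation, and compares the two by counting highest derivatives ($D_{\rho|\cdot|^y}$ against the multi-sets $\Omega(\EE)$ and the bounds of \cite[Proposition 8.3]{Xu17b}) to force temperedness of $\sigma$; the non-good-parity part is handled by noting that genericity of the Speh factors forces $b_i=1$. Temperedness of the \emph{packet} (not just of $\sigma$) then comes from the operator criterion of Theorem \ref{thm tempered one A-packet}: a generic tempered representation has trivial character, so none of the operators $ui_{i,j}$, $dual_i^{\pm}$, $dual\circ ui_{i,j}\circ dual$ applies, and it lies in exactly one packet, which must be $\Pi_\psi$. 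If you want to keep a parameter-theoretic route, the correct replacement for your rigidity claim is a closure-ordering statement that genuinely uses genericity of $\pi$ (this is essentially the later approach of \cite{HLLZ22}), not irreducibility of the standard module.
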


In \cite{LS22}, the second-named author and Shahidi proved Conjecture \ref{Enhanced Shahidi conjecture intro} for symplectic and quasi-split special orthogonal groups, applying the matching method of endoscopic liftings, assuming properties of the wavefront sets of certain bitorsor representations. 

Let $\mathrm{G}_n$ be the symplectic or the split odd special orthogonal group of rank $n$ and let $G_n=\mathrm{G}_n(k_v)$. 
As an application of the authors' results on the intersection theory of local Arthur packets in \cite{HLL22} and the classification of generic representations as in \cite{Mui98, JS04, Liu11}, we prove Conjecture \ref{Enhanced Shahidi conjecture intro} for $G_n$, without any assumptions.

\begin{thm}[Theorems \ref{proof of enh sha conj} and \ref{thm non-unitary Enhanced Shahidi}]
\label{proof of enh sha conj intro}
 Conjecture \ref{Enhanced Shahidi conjecture intro} is valid for $G_n$. 
\end{thm}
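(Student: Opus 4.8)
The plan is to reduce Conjecture~\ref{Enhanced Shahidi conjecture intro} for $G_n$ to a combinatorial statement about local Arthur parameters, then settle that statement using the intersection theory of local Arthur packets from \cite{HLL22} together with the explicit classification of generic representations via the Langlands quotient construction and the Mui\'c--Jiang--Soudry--Liu criterion (\cite{Mui98, JS04, Liu11}). For part (1), one direction is classical: if $\psi$ is tempered then $\Pi_\psi$ contains the generic member of the tempered $L$-packet $\Pi_{\phi_\psi}$, so the content is the converse. For part (2), the reduction to part (1) should be essentially formal by a twisting/deformation argument: a parameter $\psi \in \Psi^+(G)$ has $\psi|_{W_{k_v}}$ of the form $\bigoplus_i \phi_i|\cdot|^{x_i}$ with $|x_i| < \tfrac12$, and the generic members of $\Pi_\psi$ should be Langlands quotients built from the generic members attached to the ``central'' parameter $\psi_0 \in \Psi(G_{n'})$ of a Levi; genericity of the induced representation is detected by non-vanishing of the relevant standard module (equivalently the $L$-parameter being the one associated to a generic representation), and this transfers the problem to the tempered-support parameter. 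So the crux is part (1)'s hard direction.

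For the hard direction of part~(1), I would argue by contradiction: suppose $\psi \in \Psi(G_n)$ is \emph{not} tempered, i.e.\ $\psi|_{\mathrm{SL}_2^A}$ is nontrivial, yet $\Pi_\psi$ contains a generic representation $\pi$. The key input from \cite{HLL22} is a precise description of which representations lie in $\Pi_\psi$ and, crucially, of the set of all local Arthur parameters $\psi'$ with $\pi \in \Pi_{\psi'}$ (the ``intersection'' structure). A generic representation of $G_n$ is, by \cite{Mui98, JS04, Liu11}, the unique generic constituent of a standard module, and its $L$-parameter $\phi_\pi$ is determined: writing the generic representation via its Langlands data, $\phi_\pi$ has a prescribed shape with the tempered part being itself the parameter of a generic tempered representation. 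I would then compare $\phi_\pi$ with $\phi_{\psi}$ (given by \eqref{apequ1}): if $\pi \in \Pi_\psi$, general properties of local Arthur packets force $\phi_\pi$ to be ``no more tempered than'' $\phi_\psi$ in a suitable sense (e.g.\ the multiset of exponents of $\pi$ is dominated by that coming from the $\mathrm{SL}_2^A$-action on $\psi$), and conversely genericity forces $\phi_\pi$ to be the $L$-parameter of a generic representation, which via the Rodier-type / Jiang--Soudry characterization is incompatible with a nontrivial $\mathrm{SL}_2^A$-factor. Making this incompatibility precise is where the explicit combinatorics of Jordan blocks and the results of \cite{HLL22} on when a fixed representation belongs to a given packet do the real work.

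The main obstacle I anticipate is controlling the generic member inside a general (not necessarily tempered-supported) local Arthur packet precisely enough: one must know not merely that \emph{some} generic representation could a priori appear, but exactly which representations in $\Pi_\psi$ are generic, and match this against the classification of generics by their Langlands quotient data. This is exactly the point where \cite{HLL22} is indispensable — it should supply, for each $\psi$, either the generic member of $\Pi_\psi$ (when $\psi$ is generic) or a proof that no constituent has the Langlands data of a generic representation (when $\psi$ is non-generic). A secondary technical point is the passage between $\Psi(G)$ and $\Psi^+(G)$ in part~(2): one must check that the deformation $x_i \to$ generic values preserves both membership in the packet and genericity of the relevant constituent, which relies on the compatibility of local Arthur packets with parabolic induction and on the irreducibility of the standard module attached to a generic parameter. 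Once these pieces are in place, Theorems~\ref{proof of enh sha conj} and~\ref{thm non-unitary Enhanced Shahidi} follow by assembling the two parts.
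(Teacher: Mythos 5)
Your proposal correctly identifies the coarse shape of the argument (argue by contradiction using the Langlands-data classification of generic representations together with the intersection machinery of \cite{HLL22}, then reduce part~(2) to part~(1)), but the crucial step producing the contradiction is missing, and the mechanism you gesture at is not the one that actually works. You write that if $\pi\in\Pi_\psi$ then ``general properties of local Arthur packets force $\phi_\pi$ to be no more tempered than $\phi_\psi$'' and that ``the multiset of exponents of $\pi$ is dominated by that coming from the $\mathrm{SL}_2^A$-action,'' and you frame the conclusion as an incompatibility of Langlands data with a nontrivial Arthur-$\SL_2$. This is not what the paper does, and it is not clear such a domination statement is available in the generality you need. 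The paper's actual argument is a sharper counting argument in terms of the extended multi-segment $\EE$ with $\pi=\pi(\EE)$: writing $\sigma=\Delta_{\rho_1}[x_1,-y_1]\times\cdots\rtimes\pi(\phi,1)$, one picks $\rho$, sets $y=\max\{y_i : \rho_i\cong\rho\}$, and in the case $z\le y$ compares three numbers attached to $\EE_\rho=\{([A_k,B_k]_\rho,l_k,\eta_k)\}$: by \cite[Theorem 4.10]{HLL22} the order of the highest $\rho|\cdot|^y$-derivative of $\sigma$ equals $\#\{k:A_k=y\}=\#\{k:A_k\ge y\}$; by \cite[Proposition 8.3]{Xu17b} that order is $\le \#\{k:B_k=y\}$; and since $B_k\le A_k$ one has $\#\{k:A_k\ge y\}\ge\#\{k:B_k=y\}$. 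Forcing equality shows every segment of $\EE_\rho$ with $A_k\ge y$ is $[y,y]_\rho$, and then an explicit structural lemma (Lemma~\ref{lem gen observation}(b)) about $\pi(\EE)$ when $\EE_\rho$ ends in such rows gives that all $y_i<y$ in the Langlands data, contradicting the choice of $y$. The remaining case $z>y$ is handled by peeling off derivatives $D_{\rho|\cdot|^x}$ for $x=z,z-1,\dots,y+1$ until one lands in the first case. None of this three-way counting, nor the derivative reduction, nor Lemma~\ref{lem gen observation}(b), appears in your sketch, and without them the contradiction does not materialize.

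Two smaller points. First, you suggest \cite{HLL22} ``should supply, for each $\psi$, either the generic member of $\Pi_\psi$ or a proof that no constituent has the Langlands data of a generic representation''; that is not how the intersection theory is used here — it supplies the algebra of operators $R_k,ui_{i,j},dual_k^{\pm}$ and the support/derivative formulas, which have to be combined with the genericity classification by hand. Second, for part~(2) you invoke a ``twisting/deformation'' argument; the paper instead uses the decomposition $\psi=\psi_{nu,>0}+\psi_{np}+\psi_{gp}+\psi_{np}^\vee+\psi_{nu,>0}^\vee$ (Theorem~\ref{thm red from nu to gp}) and the fact that if $\tau\rtimes\sigma$ is irreducible and generic then so are $\tau$ and $\sigma$, reducing to the good-parity case and forcing each Speh factor of $\tau_{\psi_{np}}$ and $\tau_{\psi_{nu,>0}}$ to be a single column, i.e.\ $b_i=1$. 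That reduction is concrete and should be stated as such rather than as a deformation. Finally, part~(ii) of Theorem~\ref{proof of enh sha conj}, that a tempered generic representation lies in a unique packet, is an instance of a separate criterion (Theorem~\ref{thm tempered one A-packet}) derived directly from the applicability conditions of the operators; your sketch does not address this component at all.
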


We remark that the proof of Theorem \ref{proof of enh sha conj intro} is extracted from the first arXiv version of \cite{HLL22} and is the {\it first complete} proof of the enhanced Shahidi conjecture. The method used here is different from that in \cite{LS22}.
Later, in \cite{HLLZ22}, jointly with Zhang, we gave another proof for $G_n$, applying the closure ordering relations among the local $L$-parameters of representations in local Arthur packets, which is geometric in nature.

Since generic local Arthur packets are disjoint, Theorem \ref{proof of enh sha conj intro} implies that a (local) generic representation of Arthur type lies in exactly one local Arthur packet, whose parameter is generic. As an application of this observation, we obtain the following result on local components of automorphic representations in global Arthur packets with a generic local component at some finite place. 
This result has its own interest and is expected by experts. 

\begin{prop}[Proposition \ref{thm partial Clozel 4}]\label{thm partial Clozel 4 intro}
    Suppose that $\pi=\otimes\pi_v$ lies in a global Arthur packet $\Pi_\psi$ of $\mathrm{G}_n(\mathbb{A}_k)$ and that there exists a finite place $v_0$ such that $\pi_{v_0}$ is generic. Then the following holds:
    \begin{enumerate}
        \item The global Arthur parameter is of the form $\psi=\boxplus_{i=1}^m \mu_i\otimes S_1,$ where $\mu_i$'s are irreducible cuspidal representations of general linear groups.
        \item For any place $v,$  $\psi_v$ is generic. In particular, $\pi_{v}$ is generic for almost all finite places $v$.
        \item Furthermore, for almost all places $v$, the  local Arthur parameter $\psi_v$ has trivial restrictions to both the Deligne-$\SL_2(\mathbb{C})$ and Arthur-$\SL_2(\mathbb{C}).$
    \end{enumerate}
\end{prop}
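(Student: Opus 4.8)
The plan is to deduce everything from the enhanced Shahidi conjecture (Theorem~\ref{proof of enh sha conj intro}) applied at the place $v_0$, together with the classification of global Arthur parameters for $\mathrm{G}_n$ recalled in \S\ref{sec global Arthur packets}. Since $\pi_{v_0}\in\Pi_{\psi_{v_0}}$ with $\psi_{v_0}\in\Psi^+(\mathrm{G}_n(k_{v_0}))$ the localization of $\psi$, the representation $\pi_{v_0}$ is of Arthur type; being generic, part (2) of Conjecture~\ref{Enhanced Shahidi conjecture intro} (valid by Theorem~\ref{proof of enh sha conj intro}) forces $\psi_{v_0}$ to be a generic local Arthur parameter, i.e.\ $\psi_{v_0}|_{\mathrm{SL}_2^A(\mathbb{C})}$ is trivial. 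Write $\psi=\boxplus_{i=1}^m\mu_i\otimes S_{d_i}$ in the standard form of \S\ref{sec global Arthur packets}, with the $\mu_i$ irreducible self-dual cuspidal automorphic representations of general linear groups. Localization gives $\psi_{v_0}=\boxplus_{i=1}^m\phi_{\mu_{i,v_0}}\otimes S_{d_i}$, where $\phi_{\mu_{i,v_0}}$ is the local $L$-parameter of $\mu_{i,v_0}$, so the restriction of $\psi_{v_0}$ to $\mathrm{SL}_2^A(\mathbb{C})$ is $\bigoplus_i(\dim\phi_{\mu_{i,v_0}})\,S_{d_i}$. This is trivial precisely when every $d_i=1$, which gives part (1).

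For part (2), at an arbitrary place $v$ the localization is $\psi_v=\boxplus_{i=1}^m\phi_{\mu_{i,v}}\otimes S_1$, which is again trivial on $\mathrm{SL}_2^A(\mathbb{C})$; hence $\psi_v$ is generic for every $v$. At almost every finite place $v$ both $\pi_v$ and $\psi_v$ are unramified, so $\pi_v$ is the unique unramified member of $\Pi_{\psi_v}$, and for the generic parameter $\psi_v$ this packet is the Langlands packet of the unramified $L$-parameter $\phi_{\psi_v}$. Writing $\psi_v|_{W_{k_v}}=\bigoplus_j\phi_j|\cdot|^{x_j}$, one has $|x_j|<\tfrac12$ since $\psi_v\in\Psi^+(\mathrm{G}_n(k_v))$; this bound rules out all reducibility of the unramified principal series of $\mathrm{G}_n(k_v)$ attached to $\phi_{\psi_v}$, so its unramified constituent is the full irreducible induced representation and is therefore generic. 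Thus $\pi_v$ is generic for almost all finite $v$. For part (3), at almost every place each $\mu_{i,v}$ is an unramified principal series of a general linear group, so $\phi_{\mu_{i,v}}$ is a sum of unramified characters and is trivial on the Deligne $\mathrm{SL}_2(\mathbb{C})$; hence $\psi_v|_{\mathrm{SL}_2^D(\mathbb{C})}$ is trivial, while $\psi_v|_{\mathrm{SL}_2^A(\mathbb{C})}$ is trivial by part (1).

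The one step that is not purely formal is the genericity of $\pi_v$ at the unramified places: it rests on the bounds toward the generalized Ramanujan conjecture for cuspidal representations of general linear groups, packaged in the constraint $|x_j|<\tfrac12$, which is exactly what prevents the relevant principal series from degenerating. A variant that sidesteps the explicit reducibility analysis is to note that, by part (1), $\psi=\boxplus_i\mu_i$ is a generic global Arthur parameter, so $\Pi_\psi$ contains a globally generic member; its local components agree with the unique unramified member $\pi_v$ of $\Pi_{\psi_v}$ at almost all $v$ and are hence generic. Everything else is an immediate consequence of Theorem~\ref{proof of enh sha conj intro} and the structure of global Arthur parameters, so I do not anticipate any further obstacle.
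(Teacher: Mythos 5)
Your strategy for Parts (1) and (3) is exactly the paper's: Part (1) follows by applying Theorem \ref{thm non-unitary Enhanced Shahidi}(i) (equivalently Conjecture \ref{Enhanced Shahidi conjecture intro}(2), only-if direction) at $v_0$ to force $\psi_{v_0}$ trivial on Arthur--$\SL_2$, then reading off $b_i = 1$ from the localization formula; Part (3) is the observation that $(\mu_i)_v$ unramified forces trivial Deligne--$\SL_2$. These are correct.

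The gap is in the genericity of $\pi_v$ in Part (2). You claim that the $\Psi^+$ bound $|x_j| < \tfrac12$ ``rules out all reducibility of the unramified principal series of $\mathrm{G}_n(k_v)$ attached to $\phi_{\psi_v}$,'' and conclude the unramified constituent is the full induced representation. This is false for $\mathrm{G}_n$. That bound does control reducibility coming from the $\GL$-type shifts (and indeed makes $\tau_{\psi_{nu,>0}} \rtimes \pi_u$ in \eqref{eq def packet +} irreducible, which is what \cite[Proposition 5.1]{Moe11b} gives), but for symplectic and odd orthogonal groups the \emph{tempered} part of the unramified principal series---those $\phi_j|\cdot|^{x_j}$ with $x_j = 0$---can already be reducible, with constituents parametrized by a nontrivial $R$-group. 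In that case the unramified member is a proper subquotient, not the whole induced module, and the argument ``full induced representation $\Rightarrow$ generic'' does not apply. What one needs instead, and what the paper does, is the fact that for almost all $v$ the character of $\pi_v$ in $\widehat{\mathcal{S}}_{\psi_v}$ is trivial; then Theorem \ref{thm Arthur generic} shows the tempered piece $\pi_{gp}$ corresponding to the trivial character is generic, and by irreducibility of $\tau_{\psi_{nu,>0}}\times\tau_{\psi_{np}}\rtimes\pi_{gp}$ together with Theorem \ref{thm generic sequence} the whole $\pi_v$ is generic. Your closing ``variant'' (using the globally generic member of $\Pi_\psi$) is the right idea and is really the trivial-character argument in disguise; lead with that rather than with the irreducibility claim, which is simply not available for classical groups.
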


We conjecture that Part (2) of the above proposition holds for general connected reductive groups.

\begin{conj}
\label{conj generic almost all intro}
    Let $\pi$ be an automorphic representation in the discrete spectrum of a connected reductive group $\mathrm{G}(\mathbb{A}_k)$. Suppose there exists a finite place $v_0$ of $k$ such that $\pi_{v_0}$ is generic, then $\pi_v$ is generic for almost all places.
\end{conj}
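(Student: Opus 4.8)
The plan is to combine Proposition~\ref{thm partial Clozel 4 intro} with the global descent / functorial transfer to general linear groups, reducing Conjecture~\ref{conj generic almost all intro} for general $\mathrm{G}$ to the already-established case of classical groups and, ultimately, to known genericity properties of isobaric automorphic representations of $\GL_N$. First I would recall that, by the theory of endoscopic classification (Arthur for classical groups, and its extensions), an automorphic representation $\pi=\otimes_v\pi_v$ in the discrete spectrum of $\mathrm{G}(\mathbb{A}_k)$ is attached to a global Arthur parameter $\psi$, which we may write as $\psi=\boxplus_i\,\mu_i\otimes S_{d_i}$ with $\mu_i$ cuspidal on some $\GL_{n_i}(\mathbb{A}_k)$ and $S_{d_i}$ the $d_i$-dimensional representation of the Arthur $\SL_2(\mathbb{C})$. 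The local parameter $\psi_v$ has trivial restriction to the Arthur $\SL_2$ exactly when every $d_i=1$, i.e.\ when $\psi$ is \emph{generic} as a global parameter; so the heart of the matter is to show that a single generic local component $\pi_{v_0}$ forces all $d_i=1$.

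The key step is a local-global argument at the place $v_0$: the local Arthur parameter $\psi_{v_0}$ is the localization of $\psi$, hence its Arthur-$\SL_2$ factor is the "same" $\boxplus_i S_{d_i}$ twisted by the local components $\mu_{i,v_0}$. Now invoke the enhanced Shahidi conjecture (Theorem~\ref{proof of enh sha conj intro}) \emph{in the form already proved for $G_n$}, together with the observation recorded in the paper that a generic representation of Arthur type lies in a \emph{unique} local Arthur packet whose parameter is generic. Since $\pi_{v_0}\in\Pi_{\psi_{v_0}}$ is generic, $\psi_{v_0}$ must itself be a generic local parameter, which forces every $d_i=1$. For a classical group this is immediate from Theorem~\ref{proof of enh sha conj intro}; for a general quasi-split $\mathrm{G}$ one would either assume the enhanced Shahidi conjecture there (as in its statement, which is conditional on the existence of local Arthur packets) or pass through a $z$-extension and the functorial transfer of $\psi$ to an isobaric representation $\Pi=\boxplus_i\mu_i\boxtimes|\det|^{\bullet}$ of a general linear group, where the condition $d_i=1$ is equivalent to $\Pi$ being tempered at $v_0$ after an unramified twist, and then use the fact that a cuspidal $\mu_i$ is tempered at every place (Ramanujan at the level already known, or generic without temperedness) to conclude genericity of $\psi_v$ for all $v$.

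Once all $d_i=1$, genericity of $\psi_v$ for \emph{every} $v$ is formal: $\psi_v|_{\SL_2^A}$ is trivial by construction, so $\psi_v\in\Psi^+(\mathrm{G}(k_v))$ is generic, and hence its packet $\Pi_{\psi_v}$ has a generic member by Part~(2) of the enhanced Shahidi conjecture. To upgrade "$\psi_v$ generic'' to "$\pi_v$ generic for almost all $v$'', I would use that for almost all $v$ the representation $\pi_v$ is unramified and $\psi_v$ is unramified, and an unramified representation in a generic unramified local Arthur packet is the unramified constituent of a generic principal series, hence generic --- this is exactly the mechanism behind Part~(2) of Proposition~\ref{thm partial Clozel 4 intro}, applied verbatim.

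The main obstacle is the lack, in general, of the two ingredients that make the classical-group case unconditional: the existence of local Arthur packets and the enhanced Shahidi conjecture for arbitrary quasi-split $\mathrm{G}$. So a fully unconditional proof of Conjecture~\ref{conj generic almost all intro} is out of reach with present technology; what one can prove cleanly is the conditional statement "assuming the existence of local Arthur packets and the enhanced Shahidi conjecture for $\mathrm{G}$, Conjecture~\ref{conj generic almost all intro} holds'', together with the \emph{unconditional} case $\mathrm{G}=\mathrm{G}_n$ which is precisely Proposition~\ref{thm partial Clozel 4 intro}. A secondary subtlety is handling groups whose discrete spectrum is not governed by a clean endoscopic classification (e.g.\ exceptional groups), where even the shape "$\psi=\boxplus\mu_i\otimes S_{d_i}$'' is conjectural; there I would phrase everything in terms of the hypothetical Langlands group $L_k$ and the formal properties of Arthur parameters, flagging these as conjectural inputs.
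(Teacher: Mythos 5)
The statement you were asked to ``prove'' is \emph{Conjecture}~\ref{conj generic almost all intro}, and the paper does not prove it: it is recorded as an open conjecture, with only the $\mathrm{G}_n$ case established unconditionally as Proposition~\ref{thm partial Clozel 4 intro} (=\,Proposition~\ref{thm partial Clozel 4}). You correctly diagnose this situation, and the core of your sketch---write the global parameter as $\psi=\boxplus_i\mu_i\otimes S_{d_i}$, use the enhanced Shahidi conjecture at $v_0$ to force every $d_i=1$, then observe that $\psi_v$ is generic for all $v$ and the local character is trivial for almost all $v$---is exactly the mechanism behind the proof of Proposition~\ref{thm partial Clozel 4}, and is the natural conditional route for general $\mathrm{G}$. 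So as a commentary on what would be needed, your proposal is on target.

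Two points worth flagging in your discussion of the general case. First, you write that a cuspidal $\mu_i$ on $\GL_{n_i}$ ``is tempered at every place (Ramanujan at the level already known)''---this is not correct; the Ramanujan conjecture for $\GL_n$ is open, and indeed the paper keeps it as a separate hypothesis (Conjecture~\ref{conj Ramanujan}) invoked only in Propositions~\ref{thm conj Clozel 2} and~\ref{thm stronger condition}. What \emph{is} known and what Proposition~\ref{thm partial Clozel 4} actually uses is that $(\mu_i)_v$ is generic, not tempered, and this suffices for genericity of $\psi_v$ once all $d_i=1$; temperedness is only needed for the stronger Proposition~\ref{thm stronger condition intro}. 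Second, the appeal to $z$-extensions and ``functorial transfer of $\psi$ to an isobaric representation of a general linear group'' does not by itself reduce the genericity problem for an arbitrary reductive $\mathrm{G}$ to the $\GL$ case: without an endoscopic classification for $\mathrm{G}$ there is no a priori well-defined localization $\psi_v$ or local packet $\Pi_{\psi_v}$ to compare against, and the shape $\psi=\boxplus_i\mu_i\otimes S_{d_i}$ is itself conjectural. You do flag this at the end, which is the honest conclusion: a clean proof exists only conditionally (granting local Arthur packets and the enhanced Shahidi conjecture for $\mathrm{G}$), and the unconditional content is precisely the paper's Proposition~\ref{thm partial Clozel 4 intro} for $\mathrm{G}_n$.
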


In the proof of Proposition \ref{thm partial Clozel 4 intro}, we do not assume the Ramanujan conjecture of $\GL_n$. On the other hand, if we do assume the Ramanujan conjecture of $\GL_n$, then we obtain a stronger conclusion. 

\begin{prop}[Proposition \ref{thm stronger condition}]\label{thm stronger condition intro}
    Assume the Ramanujan conjecture for $\GL_n$. Suppose that $\pi=\otimes\pi_v$ lies in a global Arthur packet $\Pi_\psi$ of $\mathrm{G}_{n}(\A_k)$ and that there exists a finite place $v_0$ such that $\pi_{v_0}$ is generic. Then $\pi_v$ is tempered for all places $v$. 
\end{prop}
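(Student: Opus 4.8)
The plan is to combine Proposition~\ref{thm partial Clozel 4 intro} with the generalized Ramanujan conjecture; the upshot is that under these hypotheses every localization $\psi_v$ is already a tempered parameter, and tempered local Arthur packets consist of tempered representations.

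First I would invoke Proposition~\ref{thm partial Clozel 4 intro}(1): since $\pi_{v_0}$ is generic at the finite place $v_0$, the global Arthur parameter has trivial restriction to the Arthur $\SL_2$, so $\psi = \boxplus_{i=1}^m \mu_i \otimes S_1$ with each $\mu_i$ a unitary cuspidal automorphic representation of a general linear group $\GL_{n_i}(\A_k)$. Now fix an arbitrary place $v$ of $k$. The localization $\psi_v$ then has trivial restriction to the Arthur $\SL_2$, and its restriction to $L_{k_v}$ is $\bigoplus_{i=1}^m \phi_{\mu_{i,v}}$, where $\phi_{\mu_{i,v}}$ denotes the local Langlands parameter of the local component $\mu_{i,v}$. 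By the Ramanujan conjecture (applied to each relevant $\GL_{n_i}$), $\mu_{i,v}$ is tempered, hence $\phi_{\mu_{i,v}}$ has bounded image; therefore $\psi_v|_{L_{k_v}}$ is bounded. In other words, $\psi_v \in \Psi(\mathrm{G}_n(k_v))$ is a tempered local Arthur parameter, and the associated $L$-parameter $\phi_{\psi_v}$ — which, by \eqref{apequ1} together with the triviality on the Arthur $\SL_2$, is just $\psi_v|_{L_{k_v}}$ — is tempered.

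It remains to recall that for a tempered local Arthur parameter the local Arthur packet $\Pi_{\psi_v}$ coincides with the tempered $L$-packet $\Pi_{\phi_{\psi_v}}$, and that every member of a tempered $L$-packet is a tempered representation; this is part of Arthur's local classification for $\mathrm{G}_n$, valid at both the non-Archimedean and the Archimedean places. Since $\pi_v \in \Pi_{\psi_v}$, we conclude that $\pi_v$ is tempered, and as $v$ was arbitrary, $\pi_v$ is tempered for all places $v$.

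Given Proposition~\ref{thm partial Clozel 4 intro}, I do not expect a genuine obstacle here: the argument is essentially formal. The one step that deserves a precise citation is the identification, for a tempered parameter, of its Arthur packet with the tempered $L$-packet, together with the temperedness of the members of that packet. It is worth emphasizing that it is Proposition~\ref{thm partial Clozel 4 intro}(1) — the vanishing of the Arthur $\SL_2$-factor — that does the essential work: the Ramanujan conjecture on its own only forces $\psi_v$ to be bounded on $W_{k_v}\times\SL_2^D(\BC)$, and one still needs triviality on the Arthur $\SL_2$ in order to conclude that $\psi_v$ is tempered.
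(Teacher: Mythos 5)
Your proposal is correct and follows the same route as the paper's proof: invoke Proposition~\ref{thm partial Clozel 4 intro}(1) to get $\psi=\boxplus_i\mu_i\otimes S_1$, then use the Ramanujan conjecture (the paper packages this as Lemma~\ref{lem Ramanujan}) to conclude each $\psi_v$ is bounded, hence tempered, and finally that $\pi_v\in\Pi_{\psi_v}$ is tempered. Your extra remark that the Ramanujan conjecture alone does not suffice and that the vanishing of the Arthur $\SL_2$-factor from Proposition~\ref{thm partial Clozel 4 intro} is essential is exactly the right emphasis.
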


This result is towards the generalized Ramanujan problem, as posted by Sarnak in \cite{Sar05}, to characterize the failure of temperedness of local components of general automorphic representations.
 Note that, applying Langlands functoriality, it is known that the Ramanujan conjecture for $\GL_n$ implies the generalized Ramanujan conjecture for quasi-split classical groups $\mathrm{G}$, that is, if $\pi=\otimes_v \pi_v$ is a globally generic cuspidal automorphic representation of $\mathrm{G}(\A_k)$, then $\pi_v$ are all tempered (see \cite[(6.6)]{Sha11}). Proposition \ref{thm stronger condition intro} improves the results in \cite[Theorem 6.1 and Conjecture 6.5]{Sha11}, where locally generic (at every local place) condition is assumed. 
 % We remark that Ciubotaru and Harris has considered the generalized Ramanujan conjecture for function fields (see \cite{CH23}). 

We remark that the following conjecture of Clozel is a direct corollary of Proposition \ref{thm stronger condition intro} under the assumption of the Ramanujan conjecture for $\GL_n$.

\begin{conj}[{\cite[Conjecture 5]{Clo07}}]
\label{conj Clozel 5 intro}
     Let $\pi$ be an automorphic representation in the discrete spectrum of a reductive group $\mathrm{G}(\A_k)$. Suppose there exists a finite place $v_0$ of $k$ such that $\pi_{v_0}$ is a Steinberg representation, then $\pi_v$ is tempered for all places $v$.
\end{conj}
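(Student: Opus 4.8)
The plan is to deduce Conjecture~\ref{conj Clozel 5 intro} for $\mathrm{G}=\mathrm{G}_n$ as an immediate consequence of Proposition~\ref{thm stronger condition intro}; since that proposition (together with Arthur's endoscopic classification) is available for symplectic and split odd special orthogonal groups, this is the generality in which we establish Clozel's conjecture here. First I would recall, via \cite{Art13}, that the discrete automorphic spectrum of $\mathrm{G}_n(\A_k)$ decomposes as a union of global Arthur packets; hence the given representation $\pi=\otimes_v\pi_v$ in the discrete spectrum lies in $\Pi_\psi$ for some global Arthur parameter $\psi$. This places us precisely in the setting of Proposition~\ref{thm stronger condition intro}, provided we can exhibit a finite place at which $\pi$ is generic.

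The one local input needed is that the Steinberg representation $\St$ of $\mathrm{G}_n(k_{v_0})$ is generic. This is classical: $\St$ is a discrete series representation, hence in particular tempered, and it admits a Whittaker model --- it is the generic constituent of the unramified principal series, equivalently the generic member of the tempered $L$-packet attached to the discrete parameter whose restriction to the Deligne--$\SL_2$ is principal, as in Theorem~\ref{proof of enh sha conj intro}. Consequently the hypothesis of Clozel's conjecture, namely that $\pi_{v_0}=\St$ at the finite place $v_0$, forces $\pi_{v_0}$ to be generic.

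I would then simply invoke Proposition~\ref{thm stronger condition intro}: under the Ramanujan conjecture for $\GL_n$, the existence of a finite place $v_0$ with $\pi_{v_0}$ generic implies that $\pi_v$ is tempered for every place $v$ --- which is exactly the assertion of Conjecture~\ref{conj Clozel 5 intro}. There is no essential obstacle in this final step; all of the substance has been absorbed into Proposition~\ref{thm stronger condition intro}, which in turn rests on Proposition~\ref{thm partial Clozel 4 intro} and on the intersection theory of local Arthur packets of \cite{HLL22}. The only point requiring any care is the standard-but-indispensable observation that the Steinberg representation is generic, so that the coarser hypothesis appearing in Clozel's conjecture implies the genericity hypothesis actually used in Proposition~\ref{thm stronger condition intro}.
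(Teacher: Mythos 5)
Your proposal matches the paper's own treatment: the paper states Conjecture \ref{conj Clozel 5 intro} only as a direct corollary of Proposition \ref{thm stronger condition intro} for $\mathrm{G}_n$ under the Ramanujan conjecture for $\GL_n$, using exactly the decomposition of the discrete spectrum into global Arthur packets from \cite{Art13} and the classical fact (recorded in Remark \ref{rmk Steinberg}) that the Steinberg representation is generic. Your restriction to $\mathrm{G}_n$ and to the conditional setting is the same scope the paper claims, so the argument is correct and essentially identical in approach.
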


\subsection{Unramified representations of Arthur type and global applications}
Considering the importance of unramified representations in the theory of automorphic forms and automorphic representations, it is desirable to study more closely the unramified representations of Arthur type. 
The following dual version of the enhanced Shahidi conjecture is expected by experts. For quasi-split classical groups, this is a consequence of
\cite[Proposition 6.4]{Moe09b}. 

\begin{conj}\label{conj unram intro}
Let $\mathrm{G}$ be a connected reductive group defined over a non-Archimedean local field $k_v$ and let $G=\mathrm{G}(k_v)$. Assume that there is a theory of local Arthur packets for $G$ as conjectured in \cite[\S 6]{Art89}. Then the following holds.
\begin{enumerate}
    \item [(i)] Any unramified representation of $G$ of Arthur type lies in exactly one local Arthur packet. Moreover, it lies in the $L$-packet associated to an anti-generic local Arthur packet.
    \item [(ii)]$($\cite[Conjecture 2A]{Clo07}$)$  Let $\psi \in \Psi^+(G)$. If $\phi_\psi$ is unramified, then the local Arthur packet $\Pi_\psi$ contains a unique unramified representation. More specifically, the unramified representation is the one associated to $\phi_\psi$ via the Satake isomorphism $($\cite{Sat63}$)$.
\end{enumerate}

\end{conj}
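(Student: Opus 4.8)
The plan is to realize every unramified representation of Arthur type as the Aubert dual of the generic member of a generic local Arthur packet, and to play this off against the enhanced Shahidi conjecture (Theorem~\ref{proof of enh sha conj intro}) and the explicit classification of generic representations.

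\emph{Structure of the relevant parameters.} First I would record that if $\psi\in\Psi^+(G_n)$ has $\phi_\psi$ unramified, then $\psi$ is trivial on the Deligne $\mathrm{SL}_2(\mathbb{C})$ and $\psi|_{W_{k_v}}$ is unramified; this is immediate from \eqref{apequ1}, since a nontrivial Deligne-$\mathrm{SL}_2$ part, or a ramified part of $\psi|_{W_{k_v}}$, would survive into $\phi_\psi$. Thus $\psi=\bigoplus_j \tau_j\boxtimes S_{a_j}$, where $S_{a_j}$ is the $a_j$-dimensional representation of the Arthur $\mathrm{SL}_2(\mathbb{C})$ and each $\tau_j$ is an unramified character with $|\cdot|$-exponent in $(-\tfrac12,\tfrac12)$. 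In particular $\psi$ is anti-generic, and its Aubert dual $\widehat\psi$ (interchange the two $\mathrm{SL}_2(\mathbb{C})$-factors) is a generic parameter satisfying $\widehat\psi\in\Psi^+(G_n)$, with $\widehat\psi=\bigoplus_j\tau_j\boxtimes S_{a_j}$ now read as an $L$-parameter.

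\emph{Existence and the shape of the unramified member.} By Mœglin--Waldspurger, the Aubert involution carries $\Pi_{\widehat\psi}$ bijectively onto $\Pi_\psi$. Since $\widehat\psi$ is generic, Theorem~\ref{proof of enh sha conj intro}(2) provides a generic $\pi^{\mathrm{gen}}\in\Pi_{\widehat\psi}$; by the classification of generic representations of $G_n$ in \cite{Mui98,JS04,Liu11}, $\pi^{\mathrm{gen}}$ is the Langlands quotient of a standard module assembled from generalized Steinberg representations $\St(\tau_j,a_j)$ of general linear groups together with a generic tempered representation of a smaller classical group. I would then compute the Aubert dual term by term: the Aubert dual of $\St(\tau_j,a_j)$ is the Langlands quotient of $\tau_j|\cdot|^{-(a_j-1)/2}\times\cdots\times\tau_j|\cdot|^{(a_j-1)/2}$, which is unramified because $\tau_j$ is, and (using Mœglin--Waldspurger's formulas for the Aubert involution on classical groups) the tempered core is likewise carried to an unramified representation; hence $\widehat{\pi^{\mathrm{gen}}}$ is the Langlands quotient of a standard module with unramified inducing data, i.e.\ the unramified representation $\pi(\phi_\psi)$ attached to $\phi_\psi$ via the Satake isomorphism. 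This shows $\pi(\phi_\psi)\in\Pi_\psi$. Uniqueness is then the easy half: every member of $\Pi_\psi$ has infinitesimal character that of $\phi_\psi$, and an unramified representation is determined by its infinitesimal character (equivalently its Satake parameter), so $\Pi_\psi$ contains at most one unramified representation. Together with the previous step this gives exactly one, namely $\pi(\phi_\psi)$, proving (ii).

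For (i): if $\pi$ is unramified and of Arthur type, choose $\psi\in\Psi^+(G_n)$ with $\pi\in\Pi_\psi$; then $\phi_\psi$ is the (unramified) $L$-parameter of $\pi$, so by the structure above $\psi$ is anti-generic, and by the uniqueness just proved $\pi=\pi(\phi_\psi)=\widehat{\pi^{\mathrm{gen}}}$ for the generic $\pi^{\mathrm{gen}}\in\Pi_{\widehat\psi}$. Hence $\widehat\pi=\pi^{\mathrm{gen}}$ is a generic representation of Arthur type, which by Theorem~\ref{proof of enh sha conj intro} (together with the disjointness of generic local Arthur packets) lies in exactly one local Arthur packet; applying the Aubert involution again, $\pi$ lies in exactly one local Arthur packet, which is the anti-generic $\Pi_\psi$. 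Finally $\pi=\pi(\phi_\psi)$ is, by definition, the member of the $L$-packet $\Pi_{\phi_\psi}$ attached to $\phi_\psi$ via the Satake isomorphism, which is the $L$-packet associated to the anti-generic packet $\Pi_\psi$; this gives the remaining assertion of (i). The crux of the whole argument is the Aubert-dual computation in the existence step: one must push the Langlands-quotient and parabolic-induction combinatorics through Mœglin--Waldspurger's description of the Aubert involution for symplectic and odd special orthogonal groups and verify that the tempered core of $\pi^{\mathrm{gen}}$ dualizes to an unramified representation; once this is in place, everything else is formal.
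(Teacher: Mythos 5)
Your proposal has the right flavor---reduce to the tempered side via the Aubert involution and invoke the enhanced Shahidi theorem---but the crucial existence step is false, and the paper explicitly warns against exactly the shortcut you take.

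The central claim you need is that the Aubert dual of the generic member $\pi^{\mathrm{gen}}\in\Pi_{\widehat\psi}$ is the unramified representation, equivalently that the Aubert dual of the unramified member of $\Pi_\psi$ is generic. This is \emph{not} true in general. Remark \ref{rmk Steinberg}'s companion, Remark 5.7(2) of the paper, says precisely this: for unramified $\pi\in\Pi_\psi$ of Arthur type, $\widehat\pi$ is generic if and only if the sign character $\varepsilon_\psi^{MW/W}$ of \cite[Definition 8.1]{Xu17b} is trivial, and this character can fail to be trivial. Your ``term by term'' computation never touches the real source of trouble: since $\widehat\psi$ is tempered in the good-parity case, $\pi^{\mathrm{gen}}$ \emph{is} its own tempered core (there are no $\mathrm{GL}$-factors in the standard module), so the whole content of the existence step is the assertion that $\widehat{\pi_{\mathrm{temp}}}$ is unramified---and that is precisely the unjustified claim. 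Concretely, take $\psi = 1\otimes S_1\otimes S_1 \oplus \chi\otimes S_1\otimes S_1 \oplus \chi\otimes S_1\otimes S_3$ for $\mathrm{Sp}_4(F)$, with $\chi$ the unramified quadratic character: computing $dual(\EE)$ for $\EE=\{([0,0]_1,0,1)\}\cup\{([0,0]_\chi,0,1),([1,1]_\chi,0,1)\}$ via Definition \ref{dual segment} produces $\eta'_{\chi,i}=-1$ on both $\chi$-rows (since $\alpha_i+\beta_i$ is odd), so $\widehat{\pi(\widehat\psi,1)}$ corresponds to a nontrivial character and is \emph{not} the unramified representation $\pi(\EE')$ with $\eta\equiv 1$ of \eqref{eq EE unramified}. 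Thus $\widehat\pi\neq\pi^{\mathrm{gen}}$, and your argument for both the existence part of (ii) and the whole of (i) (which presupposes $\widehat\pi=\pi^{\mathrm{gen}}$) collapses.

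The paper sidesteps this by \emph{not} requiring $\widehat\pi$ to be generic. It first classifies the unramified representations of Arthur type explicitly in terms of an extended multi-segment $\EE$ of the shape \eqref{eq EE unramified} (Theorem \ref{prop unram L-data}, using Algorithm \ref{alg Arthur type}), then computes $dual(\EE)$ explicitly and shows the resulting \emph{tempered but possibly non-generic} $\widehat\pi$ satisfies the two conditions of Theorem \ref{thm tempered one A-packet}, which gives $|\Psi(\widehat\pi)|=1$ independently of genericity. That criterion (Theorem \ref{thm tempered one A-packet}) is the right tool here, and is more flexible than the generic-uniqueness statement of Theorem \ref{thm non-unitary Enhanced Shahidi}(ii) that you invoke. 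A secondary remark: your uniqueness argument in (ii) relies on the assertion that all members of $\Pi_\psi$ have the cuspidal support of $\phi_\psi$; this is true but not obvious, and you should either cite it or argue it---the paper's route through the explicit $\EE$-classification avoids needing it.
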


In \S \ref{sec unram classification}, we give a classification of unramified representations of $G_n$ of Arthur type in terms of their $L$-data (see Theorem \ref{prop unram L-data}), making use of Algorithm \ref{alg Arthur type}. Then, we give a new proof of Conjecture \ref{conj unram intro} for $G_n$ (see Proposition \ref{cor unram dual} and Corollary \ref{cor unram at most one}). We remark that Conjecture \ref{conj unram intro} directly implies the following expectation of Arthur and Clozel (see \cite[Conjecture 6.1]{Sha11}).

\begin{cor}\label{thm unram intro 3}
Let $\pi=\otimes_v \pi_v$ be an automorphic representation of $\mathrm{G}_n(\mathbb{A}_k)$ in the discrete spectrum with global Arthur parameter $\psi \in \Psi(\mathrm{G}_n(k))$. 
Then, for almost all finite places $v$, we have $\pi_v\in\Pi_{\phi_{\psi_v}}.$ 
\end{cor}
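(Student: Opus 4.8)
The plan is to deduce this statement directly from Conjecture \ref{conj unram intro}(ii), which the paper establishes for $G_n$ (cited as Proposition \ref{cor unram dual} and Corollary \ref{cor unram at most one}). First, I would recall the standard fact that for a global Arthur parameter $\psi \in \Psi(\mathrm{G}_n(k))$, at almost all finite places $v$ the localization $\psi_v$ is unramified; that is, $\psi_v$ factors through the unramified quotient $W_{k_v}/I_{k_v}$ on the Weil factor, is trivial on the Deligne $\SL_2$, and the group $\mathrm{G}_n$ is unramified (split, in fact) at $v$. This is because $\psi$ is built out of finitely many cuspidal automorphic representations $\mu_i$ of general linear groups together with the $S_{n_i}$ factors from the Arthur $\SL_2$, and each $\mu_i$ is unramified at almost all places. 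Consequently $\phi_{\psi_v}$, which is obtained from $\psi_v$ by \eqref{apequ1}, is also an unramified $L$-parameter at such $v$.

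Next, I would invoke the discrete spectrum decomposition along global Arthur packets (Arthur's theorem, \cite{Art13}): since $\pi = \otimes_v \pi_v$ lies in the discrete spectrum with global Arthur parameter $\psi$, we have $\pi \in \Pi_\psi$, and therefore $\pi_v \in \Pi_{\psi_v}$ for every place $v$. At almost all finite places, $\pi_v$ is additionally unramified — this is part of the definition of a restricted tensor product / of $\pi$ being an automorphic representation. So at almost all finite places $v$, the representation $\pi_v$ is an unramified member of the local Arthur packet $\Pi_{\psi_v}$, and $\psi_v$ has the property that $\phi_{\psi_v}$ is unramified.

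Now I would apply Conjecture \ref{conj unram intro}(ii) (proved for $G_n$ in the paper): when $\phi_{\psi_v}$ is unramified, the packet $\Pi_{\psi_v}$ contains a \emph{unique} unramified representation, namely the one attached to $\phi_{\psi_v}$ via the Satake isomorphism, which is precisely the Langlands quotient $\pi_{\psi_v}$ associated to $\phi_{\psi_v}$ — the unique unramified member of $\Pi_{\phi_{\psi_v}}$. Since $\pi_v$ is an unramified member of $\Pi_{\psi_v}$ at almost all finite places, uniqueness forces $\pi_v$ to be exactly this representation, hence $\pi_v \in \Pi_{\phi_{\psi_v}}$ at almost all finite places, as claimed.

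The only genuine point requiring care — and the step I expect to be the main obstacle to a fully rigorous write-up — is the first one: verifying that "almost all $\psi_v$ are unramified with $\phi_{\psi_v}$ unramified" for a global Arthur parameter in the sense of \S \ref{sec global Arthur packets}. This needs the explicit description of $\Psi(\mathrm{G}_n(k))$ as formal sums $\boxplus_i \mu_i \otimes S_{n_i}$ of (conjugate) self-dual cuspidal representations tensored with $\SL_2$-representations, together with the facts that each $\mu_i$ is unramified at almost all finite places and that $\mathrm{G}_n$ is split (so unramified at every finite place). Everything else is a formal consequence of Arthur's multiplicity formula and the already-established Conjecture \ref{conj unram intro}(ii) for $G_n$.
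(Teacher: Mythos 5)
Your proof is correct, but it takes a genuinely different route from the paper's. The paper's own argument (Corollary \ref{cor global app unram}) does not start from the a priori unramifiedness of $\pi_v$; instead it chooses the finite set $\mathcal{S}$ so that outside $\mathcal{S}$ the character attached to $\pi_v$ in $\widehat{\mathcal{S}}_{\psi_v}$ is trivial (a consequence of Arthur's global multiplicity formula), observes that $\psi_v$ is anti-generic there because each $(\mu_i)_v$ is unramified, and then uses the fact that for anti-generic parameters the map $\Pi_{\psi_v}\to\widehat{\mathcal{S}}_{\psi_v}$ is a bijection. Combined with \cite[Theorem 1.5.1(a)]{Art13}, which says the unramified member of $\Pi_{\phi_{\psi_v}}$ sits inside $\Pi_{\psi_v}$ and corresponds to the trivial character, this pins down $\pi_v$ and simultaneously proves it is unramified. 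Your approach instead takes the unramifiedness of $\pi_v$ at almost all finite places as a given (from the restricted tensor product structure), and then invokes the local statement of Conjecture \ref{conj unram intro}(ii) — the uniqueness of the unramified member of $\Pi_{\psi_v}$ established via Corollary \ref{cor unram at most one} and Proposition \ref{cor unram dual} — to conclude $\pi_v$ is the Satake representation of $\phi_{\psi_v}$. The trade-off: the paper leans on the global triviality-of-characters input plus anti-generic bijectivity, and gets the unramifiedness of $\pi_v$ for free as an output; your version leans on the purely local uniqueness result from \S\ref{sec unram classification}, which is cleaner in spirit but requires importing the unramifiedness of $\pi_v$ as a separate (standard) input. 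Both routes are rigorous and rely on facts the paper has already established.
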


Clozel also made the following deep conjectures on local components of automorphic representations in the discrete spectrum of connected reductive groups $\mathrm{G}$, towards 
 the generalized Ramanujan problem as posted by Sarnak.
 
\begin{conj}[{\cite[Conjectures 2, 4]{Clo07}}]\label{conj Clozel 2 intro}
Let $\mathrm{G}$ be a connected reductive group defined over a number field $k$. 
Let $\pi=\otimes_v \pi_v$ be an automorphic representation of $\mathrm{G}(\mathbb{A}_k)$ in the discrete spectrum.
\begin{enumerate}
    \item For any finite place $v_0$ such that $\pi_{v}$ is unramified, the Satake parameter of $\pi_v$ is of the form $\phi_{\psi_v}(\mathrm{Frob}_{v}) $ for some $\psi_v \in \Psi(\mathrm{G}(k_v))$. Moreover, write the multi-set of the absolute value of eigenvalues of $\phi_{\psi_v}(\mathrm{Frob}_v)$ as $\{q_v^{w_{v,1}},\ldots, q_v^{w_{v, N}}\}$, where $q_v$ is the cardinality of the residue field of $k_{v}$. Then the multi-set $\{w_{v,1},\ldots, w_{v, N}\}$ is independent of the unramified place $v$.
    \item If there exists a finite place ${v_0}$ such that $\pi_{v_0}$ is unramified and tempered, then every component of $\pi$ is tempered.
\end{enumerate}
\end{conj}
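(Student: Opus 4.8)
The plan is to prove Conjecture~\ref{conj Clozel 2 intro} for $\mathrm{G}=\mathrm{G}_n$ (the case of general $\mathrm{G}$ being out of reach at present), by feeding the classification of unramified representations of Arthur type (Theorem~\ref{prop unram L-data}) and the dual enhanced Shahidi conjecture for $G_n$ (Conjecture~\ref{conj unram intro}, established here as Proposition~\ref{cor unram dual} and Corollary~\ref{cor unram at most one}) into the temperedness--propagation results, Propositions~\ref{thm partial Clozel 4 intro} and~\ref{thm stronger condition intro}; as in the latter, the sharper statements --- independence of the weight multiset in Part~(1) and all of Part~(2) --- will be obtained under the Ramanujan conjecture for $\GL_n$. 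So let $\pi=\otimes_v\pi_v$ lie in the discrete spectrum of $\mathrm{G}_n(\A_k)$; by \cite{Art13} it has a global Arthur parameter $\psi=\boxplus_{i=1}^r\mu_i\otimes S_{a_i}\in\Psi(\mathrm{G}_n(k))$ with the $\mu_i$ self-dual unitary cuspidal representations of $\GL_{n_i}(\A_k)$, and $\pi_v\in\Pi_{\psi_v}$ for every place $v$. Now fix a finite place $v$ at which $\pi_v$ is unramified. Since $\pi_v$ is unramified of Arthur type, Conjecture~\ref{conj unram intro}(i) for $G_n$ places it in the $L$-packet $\Pi_{\phi_{\psi_v}}$, which forces $\phi_{\psi_v}$ to be unramified; then Conjecture~\ref{conj unram intro}(ii) for $G_n$ shows that $\Pi_{\psi_v}$ has a unique unramified member, namely the representation attached to $\phi_{\psi_v}$ via the Satake isomorphism. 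As $\pi_v$ is unramified and lies in $\Pi_{\psi_v}$, it is that representation, so its Satake parameter equals $\phi_{\psi_v}(\mathrm{Frob}_v)$ with $\psi_v\in\Psi(\mathrm{G}_n(k_v))$; this is the first assertion of Part~(1).

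For the second assertion of Part~(1), unwinding \eqref{apequ1} shows that the eigenvalues of $\phi_{\psi_v}(\mathrm{Frob}_v)$ are the products $\alpha\,q_v^{\frac{a_i-1}{2}-j}$ as $i$ ranges over $1,\dots,r$, $j$ over $0,\dots,a_i-1$, and $\alpha$ over the Satake eigenvalues of $\mu_{i,v}$. Writing $|\alpha|=q_v^{t}$, the multiset of absolute-value exponents is thus $\bigsqcup_{i=1}^r\bigl\{\,t+\tfrac{a_i-1}{2}-j\,\bigr\}$; under the Ramanujan conjecture for $\GL_n$ every such $t$ vanishes, so this multiset equals $\bigsqcup_{i=1}^r\bigl\{\,\tfrac{a_i-1}{2}-j:0\le j\le a_i-1\,\bigr\}$ with each entry repeated $n_i$ times, which depends only on $\psi$ and not on $v$.

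For Part~(2), suppose $\pi_{v_0}$ is unramified and tempered at a finite place $v_0$. By the first paragraph $\pi_{v_0}$ is the Satake representation of $\phi_{\psi_{v_0}}$, so its $L$-parameter is $\phi_{\psi_{v_0}}\cong\bigoplus_{i=1}^r\bigoplus_{j=0}^{a_i-1}\phi_{\mu_{i,v_0}}\otimes|\cdot|^{\frac{a_i-1}{2}-j}$, where $\phi_{\mu_{i,v_0}}$ is the local $L$-parameter of $\mu_{i,v_0}$, and temperedness of $\pi_{v_0}$ says this parameter is bounded. Since the exponents of $\phi_{\mu_{i,v_0}}$ have absolute value $<\tfrac{1}{2}$ by Jacquet--Shalika, the summand with $j=0$ is bounded only if $\tfrac{a_i-1}{2}<\tfrac{1}{2}$, i.e.\ $a_i=1$; hence $\psi=\boxplus_{i=1}^r\mu_i\otimes S_1$. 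Then $\phi_{\psi_v}=\bigoplus_i\phi_{\mu_{i,v}}$ and $\pi_v\in\Pi_{\psi_v}=\Pi_{\phi_{\psi_v}}$ for every place $v$, so $\pi_v$ is tempered exactly when every $\mu_{i,v}$ is tempered; under the Ramanujan conjecture for $\GL_n$ this holds at all $v$, so every local component of $\pi$ is tempered. (Equivalently, $\pi_{v_0}$ is then the generic member of a tempered $L$-packet, and Propositions~\ref{thm partial Clozel 4 intro} and~\ref{thm stronger condition intro} apply to $\pi$ directly.)

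The main obstacle is not packet-theoretic but arithmetic: propagating local temperedness across places for $\GL_n$, i.e.\ the Ramanujan conjecture for $\GL_n$, is exactly what is needed to pass from ``$t=0$ at one place'' in Part~(1), and from ``$\mu_{i,v_0}$ tempered'' in Part~(2), to the corresponding statement at every place, and it is not available unconditionally (it is already open for Maass forms on $\GL_2$). The unconditional, and genuinely new, ingredient is the identification of $\pi_v$ with the Satake representation attached to $\phi_{\psi_v}$ in the first paragraph: this rests on the explicit classification of unramified representations of Arthur type for $G_n$ and on the dual enhanced Shahidi conjecture, both obtained here from the intersection theory of local Arthur packets of \cite{HLL22}.
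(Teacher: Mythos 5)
Your route is essentially the paper's: the statement is proved (for $\mathrm{G}_n$, conditionally on the Ramanujan conjecture for $\GL_n$) in Proposition \ref{thm conj Clozel 2}, and, like the paper, you run everything through Proposition \ref{cor unram dual}: an unramified $\pi_v\in\Pi_{\psi_v}$ lies in the $L$-packet $\Pi_{\phi_{\psi_v}}$, so $\phi_{\psi_v}$ is unramified and the Satake parameter is $\phi_{\psi_v}(\mathrm{Frob}_v)$; your weight computation is the paper's computation. In Part (2) you deviate slightly: the paper deduces from unramifiedness and temperedness that $\pi_{v_0}$ corresponds to the trivial character, hence is generic (Theorem \ref{thm Arthur generic}), and then invokes Proposition \ref{thm partial Clozel 4}(1) to force $\psi=\boxplus_{i}\mu_i\otimes S_1$, whereas you extract $a_i=1$ directly from boundedness of $\phi_{\psi_{v_0}}$ together with the Jacquet--Shalika bound $|x|<\tfrac{1}{2}$ on the exponents of $\mu_{i,v_0}$. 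Your variant is correct and somewhat more economical at this step, since it does not route through the enhanced Shahidi input (which is of course still needed for Proposition \ref{thm partial Clozel 4} itself).

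One claim should be corrected: you present the first assertion of Part (1), that the Satake parameter equals $\phi_{\psi_v}(\mathrm{Frob}_v)$ with $\psi_v\in\Psi(\mathrm{G}_n(k_v))$, as unconditional (``the unconditional\ldots ingredient''). Your argument only gives $\psi_v\in\Psi^{+}(\mathrm{G}_n(k_v))$: unramifiedness of $\phi_{\psi_v}$ does not bound $\psi_v|_{W_{k_v}}$ (an unramified character $|\cdot|^{x}$ with $x\neq 0$ is unramified but unbounded), and the localizations $(\mu_i)_v$ are a priori only known to have exponents of absolute value $<\tfrac{1}{2}$. The paper explicitly flags membership in $\Psi$ rather than $\Psi^{+}$ as the key point of Part (1), and this is exactly where Conjecture \ref{conj Ramanujan} enters, via Lemma \ref{lem Ramanujan}. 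What is unconditional is only the weaker statement that $\pi_v\in\Pi_{\phi_{\psi_v}}$ (Proposition \ref{cor unram dual}, Corollaries \ref{thm unram intro 3} and \ref{cor global app unram}), with $\psi_v\in\Psi^{+}$. Since you assume the Ramanujan conjecture for the remaining assertions anyway, the fix is simply to assume it from the outset, as in Proposition \ref{thm conj Clozel 2}; with that adjustment your proof is complete.
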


We remark that a key point of Part (1) is that in the statement, $\psi_v \in \Psi(\mathrm{G}(k_v))$, not $\Psi^+(\mathrm{G}(k_v))$. 
As applications of the results in \S \ref{sec unram classification}, we show that the Ramanujan conjecture of $\GL_n$ implies Conjecture \ref{conj Clozel 2 intro} for the groups $\mathrm{G}_n$ (see Proposition \ref{thm conj Clozel 2}). This illustrates the depth and difficulties of these conjectures. Clozel originally made these conjectures for $L^2$-automorphic representations, here for simplicity, we only consider the case of discrete spectrum. Note that by Langlands' theory of Eisenstein series, the corresponding conjectures for $L^2$-automorphic representations reduce to those for the discrete spectrum.

Following is the structure of this paper. In \S \ref{sec nota and prel}, we recall necessary notation and preliminaries. In \S \ref{sec atob refor}, we recall Atobe's reformulation on M{\oe}glin's construction of local Arthur packets and certain results from the intersection theory of local Arthur packets. 
In \S \ref{section enhanced Shahidi conjecture}, we study the existence of generic representations in local Arthur packets and prove Conjecture \ref{Enhanced Shahidi conjecture intro} for $\mathrm{G}_n$. 
In \S \ref{sec unram classification}, we classify unramified representations of $\mathrm{G}_n$ of Arthur type in terms of their $L$-data, and we verify Conjecture \ref{conj unram intro} for $\mathrm{G}_n$. In \S \ref{sec global}, we discuss the global implications of the results in \S \ref{section enhanced Shahidi conjecture} and \S \ref{sec unram classification}.

\subsection*{Acknowledgements} 
The authors would like to thank Dihua Jiang and Freydoon Shahidi for their interests, constant support, helpful discussions, and for bringing our attention on the topics of unramified representations in \S \ref{sec unram classification} and \S \ref{sec global}. 
The authors also would like to thank Wee Teck Gan, Tasho Kaletha, Anantharam Raghuram, David Renard, Bin Xu, and Lei Zhang for helpful comments and suggestions.

\section{Notation and preliminaries}\label{sec nota and prel}

Let $F$ be a local non-Archimedean field of characteristic $0$ with normalized absolute value given by $|\cdot|.$ We also regard $|\cdot|$ as a character of $\GL_n(F)$ via composition with the determinant. Set $G_n$ to be the split group $\SO_{2n+1}(F)$ or $\Sp_{2n}(F).$ We write $\Pi(G)$ for the set of equivalence classes of irreducible smooth representations of a group $G.$ We assume that every representation is smooth. 

Suppose $\Pi_1, \Pi_2$ are representations of finite length. We let $[\Pi_1]$ denote the image of $\Pi_1$ in the Grothendieck group. We write $\Pi_1 \geq \Pi_2$ if $[\Pi_1]-[\Pi_2]$ is a non-negative linear combination of irreducible representations.

For a multi-set $X$ and $a\in X$, we let $m_X(a)$ denote the multiplicity of $a$ in $X.$ Let $Y$ be another multi-set. We define the following multi-sets $Z$ by specifying the multiplicity $m_Z(a)$ for each $a$ such that $m_X(a)+m_{Y}(a)>0$.
\begin{enumerate}
    \item [$\oldbullet$] The sum of multi-sets $X+Y$: $m_{X+Y}(a):=m_X(a)+m_Y(a) $.
    \item [$\oldbullet$] The union of multi-sets $X\cup Y$: $m_{X\cup Y}(a):=\max(m_X(a),m_Y(a)).$
    \item [$\oldbullet$] The difference of multi-sets $X\setminus Y$: $m_{X\setminus Y}(a):=\max(m_X(a)-m_Y(a),0).$
    \item [$\oldbullet$] The intersection of multi-sets $X \cap Y$: $m_{X\cap Y}(a):=\min(m_X(a),m_Y(a)).$
    \item [$\oldbullet$] The symmetric difference of multi-sets $X\Delta Y:= (X\cup Y)\setminus(X\cap Y).$
\end{enumerate}

\subsection{Langlands classification}
In this subsection, we recall the Langlands classification for the groups $\GL_n(F)$ and $G_n$ (see \cite{Kon03} for a general setting).

First, we consider $\GL_n(F)$. Fix a Borel subgroup of $\GL_n(F)$. Let $P$ be a standard parabolic subgroup of $\GL_n(F)$ with Levi subgroup $M\cong \GL_{n_1}(F)\times\cdots\times\GL_{n_r}(F).$ Let $\tau_i\in \Pi(\GL_{n_i}(F))$ for $i=1,2,\dots,r.$ We set
$$
\tau_1\times\cdots\times\tau_r := \mathrm{Ind}_{P}^{\GL_n(F)}(\tau_1\otimes\cdots\otimes\tau_r)
$$
to be the normalized parabolic induction. A segment $[x,y]_\rho$ is a set of supercuspidal representations of the form
$$
[x,y]_\rho=\{\rho|\cdot|^x, \rho|\cdot|^{x-1},\dots,\rho|\cdot|^y\},
$$
where $\rho$ is an irreducible supercuspidal representation of $\GL_n(F)$ and $x,y\in\mathbb{R}$ such that $x-y$ is a non-negative integer. Let $\Delta_{\rho}[x,y]$ be the Steinberg representation attached to the segment $[x,y]_\rho$, which is the unique irreducible subrepresentation of $\rho|\cdot|^x\times\cdots\times\rho|\cdot|^y.$ It is an essentially discrete series representation of $\GL_{n(x-y+1)}(F).$ We also let $Z_\rho[y,x]$ denote the unique irreducible quotient of $\rho|\cdot|^x\times\cdots\times\rho|\cdot|^y.$ If $y=x+1,$ we set $\Delta_\rho[x,x+1]=Z_\rho[x+1,x]$ to be the trivial representation of $\GL_0(F).$

The Langlands classification for $\GL_n(F)$ states that any irreducible representation $\tau$ of $\GL_n(F)$ can be realized as a unique irreducible subrepresentation of a parabolic induction of the form
\[\Delta_{\rho_1}[x_1,y_1]\times\cdots\times\Delta_{\rho_r}[x_r,y_r],\]
where $\rho_i$ is an irreducible unitary supercuspidal representation of $\GL_{n_i}(F),$ $[x_i,y_i]_{\rho_i}$ is a segment, and $x_1+y_1\leq\cdots\leq x_r+y_r.$ In this setting, we write
$$
\tau=L(\Delta_{\rho_1}[x_1,y_1],\dots,\Delta_{\rho_r}[x_r,y_r]).
$$

Let $(x_{i,j})_{1\leq i\leq s, 1\leq j \leq t}$ be real numbers such that $x_{i,j}=x_{1,1}-i+j.$ Define a \emph{(shifted) Speh representation} to be the irreducible representation given by
$$
\begin{pmatrix}
x_{1,1} & \cdots & x_{1,t} \\
\vdots & \ddots & \vdots \\
x_{s,1} & \cdots & x_{s,t}
\end{pmatrix}_{\rho}:=L(\Delta_{\rho}[x_{1,1},x_{s,1}],\dots,\Delta_{\rho}[x_{1,t}, x_{s,t}]).
$$

Fix a Borel subgroup of $G_n$ and let $P$ be a standard parabolic subgroup of $G_n$ with Levi subgroup $M\cong\GL_{n_1}(F)\times\cdots\times\GL_{n_r}(F)\times G_{m}.$ Let $\tau_i$ be a representation of $\GL_{n_i}(F)$ for $i=1,2,\dots,r$ and $\sigma$ be a representation of $G_{m}$. We set
$$
\tau_1\times\cdots\times\tau_r\rtimes\sigma := \mathrm{Ind}_{P}^{G_n}(\tau_1\otimes\cdots\otimes\tau_r\otimes\sigma)
$$
to be the normalized parabolic induction. 

The Langlands classification for $G_n$ states that every irreducible representation $\pi$ of $G_n$ is a unique irreducible subrepresentation of 
\[\Delta_{\rho_1}[x_1,y_1]\times\cdots\times\Delta_{\rho_r}[x_r,y_r]\rtimes\pi_{temp},\]
where $\rho_i$ is an irreducible unitary supercuspidal representation of $\GL_{n_i}(F),$ $x_1+y_1\leq\cdots\leq x_r+y_r<0,$ and $\pi_{temp}$ is an irreducible tempered representation of $G_{m}.$ In this case, we write
$$
\pi=L(\Delta_{\rho_1}[x_1,y_1],\dots,\Delta_{\rho_r}[x_r,y_r];\pi_{temp}),
$$
and call $(\Delta_{\rho_1}[x_1,y_1],\dots,\Delta_{\rho_r}[x_r,y_r];\pi_{temp})$ the Langlands data, or $L$-data, of $\pi.$ In \S \ref{sec tempered parametrization}, We give more detailed parametrization of the tempered representation $\pi_{temp}$ using Arthur's theory. 

\subsection{Derivatives and socles}
\label{sec: der and socle}

Let $\pi$ be a smooth representation of $G_n$ of finite length. We let $Jac_{P}(\pi)$ be the Jacquet module of $\pi$ with respect to a parabolic subgroup $P$ of $G_n.$  Note that the semisimplification of $Jac_{P}(\pi)$ is given by $[Jac_{P}(\pi)].$

\begin{defn}
Let $P_d$ be a standard parabolic subgroup of $G_n$ with Levi subgroup isomorphic to $\GL_{d}(F)\times G_{n-d},$ $x\in\mathbb{R},$ and $\rho$ be an irreducible unitary self-dual supercuspidal representation of $\GL_d(F).$ Define the $\rho|\cdot|^x$-derivative of $\pi$, denoted $D_{\rho|\cdot|^x}(\pi),$ to be a semisimple representation satisfying
$$
[Jac_{P_d}(\pi)]=\rho|\cdot|^x\otimes D_{\rho|\cdot|^x}(\pi) + \sum_i \tau_i\otimes\pi_i,
$$
where the sum is taken over all irreducible representations $\tau_i$ of $\GL_d(F)$ such that $\tau_i\not\cong\rho|\cdot|^x$ and  $\pi_i$ are some representations of $G_{n-d}$.
\end{defn}

Set $D_{\rho|\cdot|^{x}}^{(0)}(\pi)=\pi$ and for any positive integer $k$, define $D_{\rho|\cdot|^{x}}^{(k)}(\pi)$ recursively by
$$
D_{\rho|\cdot|^{x}}^{(k)}(\pi)=\frac{1}{k}D_{\rho|\cdot|^{x}}\circ D_{\rho|\cdot|^{x}}^{(k-1)}(\pi).
$$
If $D_{\rho|\cdot|^{x}}^{(k)}(\pi)\neq 0$, but $D_{\rho|\cdot|^{x}}^{(k+1)}(\pi)=0,$ then $D_{\rho|\cdot|^{x}}^{(k)}(\pi)$ is called the \emph{highest $\rho|\cdot|^{x}$-derivative} of $\pi$. If $D_{\rho|\cdot|^{x}}(\pi)=0,$ then $\pi$ is called \emph{$\rho|\cdot|^{x}$-reduced}.

We also need the notion of derivatives for $\GL_n(F).$ However, in this situation, we must distinguish between left and right derivatives. We follow \cite[\S5]{Xu17a}.

\begin{defn}
Let $P_d$ $($resp. $Q_d$$)$ be a standard parabolic subgroup of $\GL_n(F)$ with Levi subgroup isomorphic to $\GL_{d}(F)\times \GL_{n-d}(F)$ $($resp. $\GL_{n-d}(F)\times \GL_{d}(F)$$)$, $x\in\mathbb{R},$ $\sigma$ be a smooth representation of $\GL_n(F)$, and $\rho$ be an irreducible unitary self-dual supercuspidal representation of $\GL_d(F).$ Define the left $($resp. right$)$ $\rho|\cdot|^x$-derivative of $\sigma$, denoted $D_{\rho|\cdot|^x}(\sigma)$ $($resp. $D_{\rho|\cdot|^x}^{op}(\sigma)$$)$, to be a semisimple representation satisfying
$$
[Jac_{P_d}(\sigma)]=\rho|\cdot|^x\otimes D_{\rho|\cdot|^x}(\sigma) + \sum_i \tau_i\otimes\sigma_i,
$$

$$
\left( \text{resp.  } [Jac_{Q_d}(\sigma)]= D_{\rho|\cdot|^x}^{op}(\sigma)\otimes\rho|\cdot|^x + \sum_i \sigma_i\otimes\tau_i, \right),
$$
where the sum is taken over all irreducible representations $\tau_i$ of $\GL_d(F)$ such that $\tau_i\not\cong\rho|\cdot|^x$ and $\sigma_i$ are some representations of $\GL_{n-d}(F).$
\end{defn}

Note that the right derivative defined in \cite[\S5]{Xu17a} uses the contragredient; however, for our purposes, we are only concerned with derivatives for self-dual $\rho.$ These derivatives satisfy the following Leibniz rules.

\begin{lemma}[{\cite[\S5]{Xu17a}}] \label{lem Leibniz rule}
Let $\rho$ be an irreducible unitary self-dual supercuspidal representation of $\GL_d(F)$ and $x\in\mathbb{R}.$
\begin{enumerate}
    \item [1.] For  $\sigma,\sigma_1,\sigma_2 \in \Pi(\GL_n(F)), \ \tau \in \Pi(G_m)$, we have
    \[ D_{\rho|\cdot|^x}( \sigma \rtimes \tau)= D_{\rho|\cdot|^x}(\sigma) \times \tau + D_{\rho|\cdot|^{-x}}^{op}(\sigma) \times \tau + \sigma \rtimes D_{\rho|\cdot|^x}(\tau),\]
    \begin{align*}
    D_{\rho|\cdot|^x}(\sigma_1 \times \sigma_2)&= D_{\rho|\cdot|^x}(\sigma_1) \times \sigma_2+ \sigma_1 \times D_{\rho|\cdot|^x}(\sigma_2),\\
    D_{\rho|\cdot|^{x}}^{op}(\sigma_1 \times \sigma_2)&= D_{\rho|\cdot|^{x}}^{op}(\sigma_1) \times \sigma_2+ \sigma_1 \times D_{\rho|\cdot|^{x}}^{op}(\sigma_2).
\end{align*} 
\item [2.] For $a\geq b$,
\begin{align*}
      D_{\rho|\cdot|^x}(\Delta_{\rho}[a,b])= \begin{cases} 0 & \text{ if } x \neq a,\\
\Delta_{\rho}[a-1,b] & \text{ if }x=a.\end{cases} \ \ \  &
      D_{\rho|\cdot|^x}^{op}(\Delta_{\rho}[a,b])= \begin{cases} 0 & \text{ if } x \neq b,\\
\Delta_{\rho}[a,b+1] & \text{ if }x=b.\end{cases}\\
D_{\rho|\cdot|^x}(Z_{\rho}[b,a])= \begin{cases} 0 & \text{ if } x \neq b,\\
Z_{\rho}[b+1,a] & \text{ if }x=b.\end{cases} \ \ \  &
      D_{\rho|\cdot|^x}^{op}(Z_{\rho}[b,a])= \begin{cases} 0 & \text{ if } x \neq a,\\
Z_{\rho}[b,a-1] & \text{ if }x=a.\end{cases} 
\end{align*}
\item [3.] $D_{\rho|\cdot|^{x}}$ commutes with $D_{\rho|\cdot|^{y}}$ if $|x-y|>1$. 
\end{enumerate}
\end{lemma}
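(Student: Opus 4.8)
The plan is to reduce all four assertions to computations of semisimplified Jacquet modules along maximal parabolic subgroups, followed by extraction of supercuspidal-isotypic components. The two engines are the geometric lemma of Bernstein--Zelevinsky for the groups $\GL_n(F)$ and its analogue for $G_n$, Tadi\'{c}'s structure formula for the total Jacquet functor $\mu^*$ (taken with respect to the parabolics whose Levi is $\GL_k(F)\times G_{n-k}$). The recurring bookkeeping device is supercuspidal support: as $\rho$ is supercuspidal, $\rho|\cdot|^x$ is an irreducible supercuspidal representation of $\GL_d(F)$, so any $\GL_d(F)$-block of a Jacquet module isomorphic to $\rho|\cdot|^x$ has supercuspidal support the singleton $\{\rho|\cdot|^x\}$, and this pins down which summands produced by the geometric lemma actually contribute. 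I would dispatch Part~2 first, since it is self-contained: by definition $\Delta_\rho[a,b]$ is the unique irreducible subrepresentation of $\rho|\cdot|^a\times\cdots\times\rho|\cdot|^b$, so by Zelevinsky's theory its iterated Jacquet module down to a product of copies of $\GL_d(F)$ is the single term $\rho|\cdot|^a\otimes\cdots\otimes\rho|\cdot|^b$; dually, since $Z_\rho[b,a]$ is the corresponding Langlands quotient, its iterated Jacquet module is the single term $\rho|\cdot|^b\otimes\cdots\otimes\rho|\cdot|^a$, in the opposite order. Transitivity of Jacquet functors then forces $[Jac_{P_d}(\Delta_\rho[a,b])]=\rho|\cdot|^a\otimes\Delta_\rho[a-1,b]$ (the left $\GL_d(F)$-block must be $\rho|\cdot|^a$, and the remaining factor, having iterated Jacquet module $\rho|\cdot|^{a-1}\otimes\cdots\otimes\rho|\cdot|^b$, must be $\Delta_\rho[a-1,b]$), and symmetrically for the right Jacquet module of $\Delta_\rho[a,b]$ and for $Z_\rho[b,a]$; reading off the $\rho|\cdot|^x$-isotypic part gives the four formulas.

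For Part~1, in the $\GL$ case the geometric lemma identifies $[Jac_{P_d}(\sigma_1\times\sigma_2)]$ with the degree-$d$ left component of the factorwise product of the total left Jacquet functors applied to $\sigma_1$ and $\sigma_2$. Extracting the $\rho|\cdot|^x\otimes(-)$ part, the supercuspidal-support constraint forces the block $\rho|\cdot|^x$ to come entirely from the left of $\sigma_1$ or entirely from the left of $\sigma_2$, which produces exactly $D_{\rho|\cdot|^x}(\sigma_1)\times\sigma_2+\sigma_1\times D_{\rho|\cdot|^x}(\sigma_2)$ with no cross terms; the right-derivative identity follows by the mirror computation (or by applying the left identity to contragredients and dualizing). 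For $\sigma\rtimes\tau$ I would invoke Tadi\'{c}'s formula, which expresses $\mu^*(\sigma\rtimes\tau)$ in terms of $\mu^*(\tau)$ and a modified comultiplication of $\sigma$ built from the two-step left Jacquet module of $\sigma$ by taking the contragredient of the outer-right chunk and inducing it onto the left. Restricting to a degree-$d$ leftmost $\GL$-block and extracting the $\rho|\cdot|^x$-isotypic part, supercuspidal support leaves precisely three origins for that single supercuspidal block: the far left of the Jacquet module of $\sigma$, giving $D_{\rho|\cdot|^x}(\sigma)\rtimes\tau$; the far right of the Jacquet module of $\sigma$, which contributes after passing to the contragredient and using $\widehat{\rho|\cdot|^{-x}}=\rho|\cdot|^{x}$ by self-duality of $\rho$, giving $D_{\rho|\cdot|^{-x}}^{op}(\sigma)\rtimes\tau$; and the $\GL_d(F)$-block of $\mu^*(\tau)$, giving $\sigma\rtimes D_{\rho|\cdot|^x}(\tau)$. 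Summing these yields the three-term rule.

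For Part~3, transitivity of Jacquet functors shows that $D_{\rho|\cdot|^y}\circ D_{\rho|\cdot|^x}(\pi)$ is the $\rho|\cdot|^x\otimes\rho|\cdot|^y\otimes(-)$-isotypic part of the Jacquet module of $\pi$ along the parabolic with Levi $\GL_d(F)\times\GL_d(F)\times G_{n-2d}$, while $D_{\rho|\cdot|^x}\circ D_{\rho|\cdot|^y}(\pi)$ is the $\rho|\cdot|^y\otimes\rho|\cdot|^x\otimes(-)$-isotypic part of the same Jacquet module. Writing the $\GL_{2d}(F)\times G_{n-2d}$-Jacquet module of $\pi$ as $\sum_j\theta_j\otimes\pi_j$ with $\theta_j$ irreducible and then decomposing each Jacquet module of $\theta_j$ along $\GL_d(F)\times\GL_d(F)$ inside $\GL_{2d}(F)$, it suffices to prove that for every irreducible $\theta$ of $\GL_{2d}(F)$ the multiplicities of $\rho|\cdot|^x\otimes\rho|\cdot|^y$ and of $\rho|\cdot|^y\otimes\rho|\cdot|^x$ in that Jacquet module agree. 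Here is exactly where the hypothesis $|x-y|>1$ enters: then $\rho|\cdot|^x$ and $\rho|\cdot|^y$ are unlinked, so $\rho|\cdot|^x\times\rho|\cdot|^y$ is irreducible and isomorphic to $\rho|\cdot|^y\times\rho|\cdot|^x$; consequently the only irreducible $\theta$ whose Jacquet module contains either block is $\theta=\rho|\cdot|^x\times\rho|\cdot|^y$ itself (its supercuspidal support is forced to be $\{\rho|\cdot|^x,\rho|\cdot|^y\}$, and every irreducible with that support is a constituent of the irreducible $\rho|\cdot|^x\times\rho|\cdot|^y$), and for that $\theta$ both multiplicities equal one.

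I expect the main obstacle to be the middle term of the $\sigma\rtimes\tau$ rule: correctly tracking the Weyl-element reflection in Tadi\'{c}'s formula, so that an outer-right chunk of the Jacquet module of $\sigma$ contributes via the opposite derivative $D_{\rho|\cdot|^{-x}}^{op}(\sigma)$ with the correct exponent sign, together with the easy-to-overlook point that the commutation in Part~3 genuinely fails without the unlinkedness hypothesis $|x-y|>1$. Everything else is routine Jacquet-module bookkeeping once the geometric lemmas are in hand.
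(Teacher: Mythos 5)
The paper does not prove this lemma itself; it cites it to Xu's paper [Xu17a, \S5], where the standard argument runs exactly along the lines you sketch: the geometric lemma (Bernstein--Zelevinsky) for $\GL_n$, Tadi\'{c}'s structure formula $\mu^*(\sigma\rtimes\tau)=M^*(\sigma)\rtimes\mu^*(\tau)$ for $G_n$, the irreducibility of the iterated Jacquet modules of Steinberg and Zelevinsky representations, and supercuspidal-support bookkeeping to isolate the $\rho|\cdot|^x$-isotypic pieces. Your reconstruction is correct and takes essentially the same route, including the key points: using self-duality of $\rho$ to turn the contragredient appearing in $M^*$ into the $D_{\rho|\cdot|^{-x}}^{op}$ term, and using unlinkedness when $|x-y|>1$ so that $\rho|\cdot|^x\times\rho|\cdot|^y$ is irreducible (and is the only irreducible of $\GL_{2d}(F)$ with that cuspidal support), from which the equality of $\rho|\cdot|^x\otimes\rho|\cdot|^y$-- and $\rho|\cdot|^y\otimes\rho|\cdot|^x$--multiplicities follows and gives the commutation in Part~3.
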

For a multi-set of real numbers $\{x_1,\dots,x_r\}$, we denote the composition of derivatives by
\[ D_{\rho|\cdot|^{x_1,\dots ,x_r}}(\pi):=D_{\rho|\cdot|^{x_r}} \circ \cdots \circ D_{\rho|\cdot|^{x_1}}(\pi).\]
For example, if $\{x,\dots,x\}$ contains $k$ copies of $x$, then
\[ D_{\rho|\cdot|^{x,\dots,x} }(\pi)= (k!)\cdot D_{\rho|\cdot|^{x}}^{(k)}(\pi). \]

Let $\pi$ be a representation of finite length. Define the \emph{socle} of $\pi$, denoted by $soc(\pi)$, to be the maximal semisimple subrepresentation of $\pi.$

\begin{defn}
Let $\pi$ be a representation of finite length, $x\in\mathbb{R},$ and $\rho$ be an irreducible unitary self-dual supercuspidal representation of $\GL_d(F).$ Define
$$
S_{\rho|\cdot|^{x}}^{(r)}(\pi):=soc((\rho|\cdot|^{x})^r\rtimes\pi).
$$
\end{defn}
For a multi-set of real number $\{x_1,\dots,x_r\}$, we denote the composition of socles by
\[ S_{\rho|\cdot|^{x_1,\dots ,x_r}}(\pi):=S_{\rho|\cdot|^{x_r}} \circ \cdots \circ S_{\rho|\cdot|^{x_1}}(\pi).\]
\begin{thm}[{\cite[Lemma 3.1.3]{Jan14}, \cite[Propositions 3.3, 6.1, Theorem 7.1]{AM20}}]\label{thm derivative-socle}
Let $\rho$ be an irreducible unitary self-dual supercuspidal representation of $\GL_d(F),$ $\pi\in \Pi(G_n),$ and $x\in\mathbb{R}\setminus\{0\}.$ For any non-negative integers $k$ and $r$, we have the following. 
\begin{enumerate}
    \item The highest $\rho|\cdot|^{x}$-derivative of $\pi$, say $D_{\rho|\cdot|^{x}}^{(k)}(\pi),$ is irreducible.
    \item $S_{\rho|\cdot|^{x}}^{(r)}(\pi)$ is irreducible for any $r\geq 0.$
    \item We have
    $$
    S_{\rho|\cdot|^{x}}^{(k)}(D_{\rho|\cdot|^{x}}^{(k)}(\pi))=\pi,
    $$
    and, 
    $$
    D_{\rho|\cdot|^{x}}^{(k+r)}(S_{\rho|\cdot|^{x}}^{(r)}(\pi))=D_{\rho|\cdot|^{x}}^{(k)}(\pi).
    $$
    \item The $L$-data of $D_{\rho|\cdot|^{x}}^{(k)}(\pi)$ and $S_{\rho|\cdot|^{x}}^{(k)}(\pi)$ can be explicitly described in terms of those of $\pi.$
\end{enumerate}
\end{thm}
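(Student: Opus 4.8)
The plan is to isolate one core case — when $\pi$ is already $\rho|\cdot|^{x}$-reduced — prove that case by a Jacquet-module computation, and then deduce all four assertions by formal manipulations with parabolic induction, socles, and the Leibniz rule of Lemma~\ref{lem Leibniz rule}. The whole argument hinges on the hypothesis $x\neq 0$: since $\rho$ is unitary and self-dual, $x\neq 0$ forces $\rho|\cdot|^{x}\not\cong\rho|\cdot|^{-x}$ and $\rho|\cdot|^{x}\not\cong(\rho|\cdot|^{x})^{\vee}$, which is exactly what kills the ``cross terms'' that appear when one expands, via the geometric lemma, Jacquet modules of parabolic inductions from Levi subgroups of the form $\GL_{d}(F)^{r}\times G_{m}$.

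The core case is the assertion: if $D_{\rho|\cdot|^{x}}(\pi)=0$, then $(\rho|\cdot|^{x})^{r}\rtimes\pi$ has irreducible socle $S_{\rho|\cdot|^{x}}^{(r)}(\pi)$, which is moreover its unique irreducible subrepresentation, and $D_{\rho|\cdot|^{x}}^{(r)}(S_{\rho|\cdot|^{x}}^{(r)}(\pi))=\pi$ is the highest $\rho|\cdot|^{x}$-derivative of $S_{\rho|\cdot|^{x}}^{(r)}(\pi)$. I would first observe that $(\rho|\cdot|^{x})^{r}$ is irreducible as a representation of $\GL_{rd}(F)$, being an $r$-fold product of the single supercuspidal $\rho|\cdot|^{x}$. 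Then let $P$ be the standard parabolic of $G_{n+rd}$ with Levi $\GL_{d}(F)^{r}\times G_{n}$ and expand $[Jac_{P}((\rho|\cdot|^{x})^{r}\rtimes\pi)]$ by the geometric lemma. In the part where the $\GL_{d}(F)^{r}$-factors carry $(\rho|\cdot|^{x})^{\otimes r}$, any contributing summand would have to either leave a copy of $\rho|\cdot|^{x}$ on the $G_{n}$-factor — impossible, since $\pi$ is $\rho|\cdot|^{x}$-reduced — or pair a $\rho|\cdot|^{x}$ against the $G_{n}$-side, producing a $\rho|\cdot|^{-x}$ or $(\rho|\cdot|^{x})^{\vee}$ factor — impossible, since $x\neq 0$. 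Hence this part equals $(\rho|\cdot|^{x})^{\otimes r}\otimes\pi$ with multiplicity one. Multiplicity one in this graded piece forces the socle of $(\rho|\cdot|^{x})^{r}\rtimes\pi$ to be irreducible and to be the only irreducible subrepresentation; applying the same count to that subrepresentation (and using that $\pi$ is reduced) shows its highest $\rho|\cdot|^{x}$-derivative is exactly $r$, with value $\pi$.

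Granting the core case, the rest is bootstrapping. For (1): given a general $\pi$, let $k$ be maximal with $D_{\rho|\cdot|^{x}}^{(k)}(\pi)\neq 0$; this is a nonzero semisimple, $\rho|\cdot|^{x}$-reduced representation, so choose an irreducible summand $\sigma$. By Frobenius reciprocity and exactness of $(\rho|\cdot|^{x})^{k}\rtimes(-)$, $\pi$ embeds into $(\rho|\cdot|^{x})^{k}\rtimes\sigma$; by the core case applied to $\sigma$, $soc((\rho|\cdot|^{x})^{k}\rtimes\sigma)=S_{\rho|\cdot|^{x}}^{(k)}(\sigma)$ is irreducible, so $\pi=S_{\rho|\cdot|^{x}}^{(k)}(\sigma)$ and therefore $D_{\rho|\cdot|^{x}}^{(k)}(\pi)=D_{\rho|\cdot|^{x}}^{(k)}(S_{\rho|\cdot|^{x}}^{(k)}(\sigma))=\sigma$ is irreducible. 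Set $\sigma:=D_{\rho|\cdot|^{x}}^{(k)}(\pi)$. For (2): from $\pi\hookrightarrow(\rho|\cdot|^{x})^{k}\rtimes\sigma$ and exactness, $(\rho|\cdot|^{x})^{r}\rtimes\pi\hookrightarrow(\rho|\cdot|^{x})^{r+k}\rtimes\sigma$, so $S_{\rho|\cdot|^{x}}^{(r)}(\pi)$, being a semisimple submodule of $(\rho|\cdot|^{x})^{r+k}\rtimes\sigma$, lies in $soc((\rho|\cdot|^{x})^{r+k}\rtimes\sigma)=S_{\rho|\cdot|^{x}}^{(r+k)}(\sigma)$, which is irreducible by the core case; as the left side is nonzero it equals $S_{\rho|\cdot|^{x}}^{(r+k)}(\sigma)$ and is irreducible. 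For (3): $S_{\rho|\cdot|^{x}}^{(k)}(D_{\rho|\cdot|^{x}}^{(k)}(\pi))=S_{\rho|\cdot|^{x}}^{(k)}(\sigma)=\pi$ since $\pi=S_{\rho|\cdot|^{x}}^{(k)}(\sigma)$ as shown above, and $D_{\rho|\cdot|^{x}}^{(k+r)}(S_{\rho|\cdot|^{x}}^{(r)}(\pi))=D_{\rho|\cdot|^{x}}^{(k+r)}(S_{\rho|\cdot|^{x}}^{(k+r)}(\sigma))=\sigma=D_{\rho|\cdot|^{x}}^{(k)}(\pi)$ by (2) and the core case applied to $\sigma$.

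Finally, for (4): writing $\pi=L(\Delta_{\rho_{1}}[x_{1},y_{1}],\dots,\Delta_{\rho_{m}}[x_{m},y_{m}];\pi_{temp})$, the Leibniz rule of Lemma~\ref{lem Leibniz rule} shows that $D_{\rho|\cdot|^{x}}$ acts on this Langlands quotient by removing $\rho|\cdot|^{x}$ from the segments $\Delta_{\rho_{i}}[x_{i},y_{i}]$ with $\rho_{i}\cong\rho$, $x_{i}=x$, and from the tempered piece $\pi_{temp}$ via its known Jacquet module, with part~3 of that lemma organizing the iteration; combined with (3), the reciprocity between $S_{\rho|\cdot|^{x}}^{(k)}$ and $D_{\rho|\cdot|^{x}}^{(k)}$, and the uniqueness in the Langlands classification, one reads off the $L$-data of $D_{\rho|\cdot|^{x}}^{(k)}(\pi)$ and $S_{\rho|\cdot|^{x}}^{(k)}(\pi)$ by a routine bookkeeping. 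The main obstacle is the core case: the geometric lemma produces many terms, and the genuine content — which is exactly \cite[Lemma~3.1.3]{Jan14} and \cite[Propositions~3.3,~6.1]{AM20}, with the explicit $L$-data of (4) being \cite[Theorem~7.1]{AM20} — is the verification that the relevant graded piece of the Jacquet module is multiplicity-free and equals $(\rho|\cdot|^{x})^{\otimes r}\otimes\pi$; this is where the three hypotheses ($\rho$ self-dual, $x\neq 0$, $\pi$ reduced) are all used at once. Everything after that is formal.
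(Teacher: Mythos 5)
The paper does not give its own proof of this statement: it simply cites \cite[Lemma~3.1.3]{Jan14} and \cite[Propositions~3.3,~6.1, Theorem~7.1]{AM20}. So the relevant comparison is against those references, and your sketch does capture their strategy: isolate the $\rho|\cdot|^x$-reduced case, count the relevant graded piece of the Jacquet module, then bootstrap to the general case via socle--derivative reciprocity, with the hypotheses ``$\rho$ self-dual, $x\neq 0$, $\pi$ reduced'' all entering exactly where you say they do (they eliminate the ``reflected'' copies $\rho|\cdot|^{-x}=\rho^\vee|\cdot|^{-x}$ produced on the $\GL$-side by the geometric lemma). That is in the spirit of the cited proofs, and the formal bootstrapping in your steps (2) and (3) is sound, given the core case and given step (1).

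Two spots are thinner than your phrasing suggests, and the first one is a genuine gap. In the bootstrap for (1) you write that ``by Frobenius reciprocity and exactness of $(\rho|\cdot|^x)^k\rtimes(-)$, $\pi$ embeds into $(\rho|\cdot|^x)^k\rtimes\sigma$.'' Exactness is beside the point, and Frobenius reciprocity (second adjointness) converts $\Hom_{G}(\pi,(\rho|\cdot|^x)^k\rtimes\sigma)$ into $\Hom_{M}\bigl(Jac_{\bar P}(\pi),(\rho|\cdot|^x)^{\otimes k}\otimes\sigma\bigr)$; what you actually have from the definition of the highest derivative is that $(\rho|\cdot|^x)^{\otimes k}\otimes\sigma$ occurs as a \emph{subquotient} of $[Jac_{P}(\pi)]$. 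Upgrading ``subquotient of $Jac_{P}$'' to ``quotient of $Jac_{\bar P}$'' is not formal: one needs the filtration/extremal-exponent argument (Casselman-style, or the Bernstein--Zelevinsky stratification together with maximality of $k$ and $x\neq0$) showing that the composition factor with all $\GL$-exponents equal to $x$ sits at one end of the filtration and on the correct side of the parabolic. This is precisely the content of the cited lemmas and is not supplied by ``Frobenius reciprocity and exactness.'' Secondly, (4) is far from ``routine bookkeeping'' — it is the main computational result of \cite{AM20} (their Theorem 7.1) — though since the statement of (4) here is itself vague (``can be explicitly described''), the mismatch in level of detail is harmless.

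Separately, be a little careful with the multiplicity-one count in the core case: the socle calculation goes through second adjointness with respect to the opposite parabolic $\bar P$, and $[Jac_{\bar P}]$ and $[Jac_{P}]$ differ by a Weyl twist that flips $\rho|\cdot|^x$ to $\rho|\cdot|^{-x}$ on the $\GL$-factor. The asymmetry (you assumed $\pi$ is $\rho|\cdot|^x$-reduced, not $\rho|\cdot|^{-x}$-reduced) means the count you wrote down needs to be set up against the correct parabolic. The conclusion is still the one in \cite[Prop.~3.3]{AM20}, but as written your accounting is pointed at $Jac_P$ while the socle argument wants $Jac_{\bar P}$.
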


When $x=0,$ computing the $\rho$-derivative explicitly is generally difficult. As a remedy, Atobe and M{\'i}nguez considered the $\Delta_\rho[0,-1]$-derivative and $Z_\rho[0,1]$-derivative, denoted by $D_{\Delta_\rho[0,-1]}^{(k)}(\pi)$ and $D_{Z_\rho[0,1]}^{(k)}(\pi)$, respectively. These are semisimple representations of $G_{n-2dk}$  defined by
$$
[Jac_{P_{2dk}}(\pi)]=\Delta_\rho[0,-1]^k\otimes D_{\Delta_\rho[0,-1]}^{(k)}(\pi)+Z_\rho[0,1]^k\otimes D_{Z_\rho[0,1]}^{(k)}(\pi) + \sum_i \tau_i\otimes\pi_i,
$$
where the sum is taken over all irreducible representations $\tau_i$ of $\GL_{2dk}(F)$ such that $\tau_i$ is neither isomorphic to $\Delta_\rho[0,-1]^k$ nor $Z_\rho[0,1]^k.$ Similarly, consider
$$
S_{\Delta_\rho[0,-1]}^{(r)}(\pi):=soc(\Delta_\rho[0,-1]^r\rtimes\pi), \, \, S_{Z_\rho[0,1]}^{(r)}(\pi):=soc(Z_\rho[0,1]^r\rtimes\pi).
$$
These derivatives and socles satisfy analogous results as in Theorem \ref{thm derivative-socle}.

\begin{thm}[{\cite[Proposition 3.7]{AM20}}]\label{thm self-dual derivative}
Let $\rho$ be an irreducible unitary self-dual supercuspidal representation of $\GL_d(F)$ and $\pi\in \Pi(G_n).$ Assume that $\pi$ is $\rho|\cdot|\inv$-reduced $($respectively $\rho|\cdot|$-reduced$)$. Then the results of Theorem \ref{thm derivative-socle}(1), (2), and (3) hold with $\rho|\cdot|^x$ replaced by $\Delta_\rho[0,-1]$ $($respectively $Z_\rho[0,1])$.
\end{thm}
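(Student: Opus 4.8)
The plan is to deduce assertions~(1)--(3) from the corresponding statements of Theorem~\ref{thm derivative-socle} for an \emph{ordinary} exponent $x\neq 0$, namely $x=-1$, by factoring the operations attached to $\Delta_\rho[0,-1]$ through the two exponents $0$ and $-1$ and then using the $\rho|\cdot|\inv$-reducedness of $\pi$ to neutralize the degeneracy that the exponent $0$ would otherwise introduce. I describe the $\rho|\cdot|\inv$-reduced case with $D_{\Delta_\rho[0,-1]}$, $S_{\Delta_\rho[0,-1]}$; the $\rho|\cdot|$-reduced case with $Z_\rho[0,1]$ is symmetric, replacing $[0,-1]$ by $[0,1]$ and $\rho|\cdot|\inv$ by $\rho|\cdot|$ throughout (alternatively, one may pass between the two cases via the Aubert involution composed with the MVW contragredient, which interchange $\Delta_\rho[0,-1]\leftrightarrow Z_\rho[0,1]$ and swap the two reducedness hypotheses).

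I would begin with two structural observations. First, from $\Delta_\rho[0,-1]\hookrightarrow\rho|\cdot|^0\times\rho|\cdot|\inv$, iterated and combined with the Geometric Lemma to see that the ``sorted'' term survives in the Jacquet module, one obtains $\Delta_\rho[0,-1]^k\hookrightarrow(\rho|\cdot|^0)^k\times(\rho|\cdot|\inv)^k$, and hence an embedding $\Delta_\rho[0,-1]^k\rtimes\pi\hookrightarrow(\rho|\cdot|^0)^k\rtimes\big((\rho|\cdot|\inv)^k\rtimes\pi\big)$. Second, applying the Leibniz rule of Lemma~\ref{lem Leibniz rule}(1) together with Lemma~\ref{lem Leibniz rule}(2) — which gives $D_{\rho|\cdot|\inv}(\Delta_\rho[0,-1])=0$ and $D^{op}_{\rho|\cdot|}(\Delta_\rho[0,-1])=0$ — one finds $D_{\rho|\cdot|\inv}(\Delta_\rho[0,-1]\rtimes\pi)=\Delta_\rho[0,-1]\rtimes D_{\rho|\cdot|\inv}(\pi)$; consequently $\Delta_\rho[0,-1]\rtimes\pi$, and by induction every $S^{(r)}_{\Delta_\rho[0,-1]}(\pi)$, remains $\rho|\cdot|\inv$-reduced. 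This second fact is what makes the scheme iterable, since the theory may then be reapplied to the representations produced along the way.

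Now, since $\pi$ is $\rho|\cdot|\inv$-reduced, Theorem~\ref{thm derivative-socle}(2),(3) apply at $x=-1$: the socle $\pi_k:=S^{(k)}_{\rho|\cdot|\inv}(\pi)$ of $(\rho|\cdot|\inv)^k\rtimes\pi$ is irreducible and is the \emph{unique} irreducible subrepresentation there, with $D^{(k)}_{\rho|\cdot|\inv}(\pi_k)=\pi$. A Frobenius-reciprocity argument using this uniqueness shows that every irreducible summand of $soc\big(\Delta_\rho[0,-1]^k\rtimes\pi\big)$ already embeds into $(\rho|\cdot|^0)^k\rtimes\pi_k$; so $S^{(k)}_{\Delta_\rho[0,-1]}(\pi)$ is a sum of copies of the constituents of $soc\big((\rho|\cdot|^0)^k\rtimes\pi_k\big)$, and the problem reduces to proving that this last socle is \emph{irreducible} and occurs with multiplicity one inside $\Delta_\rho[0,-1]^k\rtimes\pi$. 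Granting this, assertion~(2) holds; assertion~(1) follows because $D^{(k)}_{\Delta_\rho[0,-1]}(\pi)$ is recovered, after descending through the $\rho|\cdot|\inv$-direction, as a highest $\rho|\cdot|^0$-derivative of a $\rho|\cdot|\inv$-reduced representation and is hence irreducible by the same analysis; and assertion~(3) follows formally from~(1), (2), the iterability above, and the adjunction $\pi\hookrightarrow\Delta_\rho[0,-1]^k\rtimes D^{(k)}_{\Delta_\rho[0,-1]}(\pi)$ together with the maximality of the highest derivative, exactly as in the proof of Theorem~\ref{thm derivative-socle}.

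The main obstacle will be the irreducibility of $soc\big((\rho|\cdot|^0)^k\rtimes\pi_k\big)$. For an arbitrary irreducible $\tau$ the socle of $\rho|\cdot|^0\rtimes\tau$ need not be irreducible — it can split as $\tau^+\oplus\tau^-$, the ``$R$-group'' phenomenon at the reducibility point $x=0$ — and this is exactly the degeneracy the hypothesis is meant to kill. The idea is that $\pi_k$, being built by peeling $\rho|\cdot|\inv$'s off the $\rho|\cdot|\inv$-reduced representation $\pi$, has rigid enough $\rho|\cdot|\inv$- and $\rho|\cdot|^0$-Jacquet modules — which one tracks, once more, through Lemma~\ref{lem Leibniz rule}, recording the mutual $D_{\rho|\cdot|^{\pm1}}$ and $D^{op}_{\rho|\cdot|^{\pm1}}$ contributions and using that derivatives at distance $>1$ commute (Lemma~\ref{lem Leibniz rule}(3)) — to force $\rho|\cdot|^0\rtimes\pi_k$ to behave like $\rho|\cdot|^x\rtimes\pi_k$ for $x\neq 0$. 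Carrying this out, i.e.\ verifying that $\rho|\cdot|\inv$-reducedness genuinely removes the $R$-group splitting, is the technical heart; everything else — the embeddings of products of segments, the Frobenius-reciprocity descent, and the passage from~(2) to~(1) and~(3) — is bookkeeping modeled on the $x\neq 0$ case. It is also the reason why only parts~(1)--(3), and not the explicit $L$-data statement of Theorem~\ref{thm derivative-socle}(4), are asserted here.
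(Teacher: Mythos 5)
The paper does not prove this statement; it is quoted directly from Atobe--M\'{i}nguez \cite[Proposition~3.7]{AM20} with no argument supplied, so there is no in-paper proof to compare against.

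As for your attempt, the outline is a plausible reduction strategy and the preliminary observations are correct: Lemma~\ref{lem Leibniz rule}(2) does give $D_{\rho|\cdot|\inv}(\Delta_\rho[0,-1])=D^{op}_{\rho|\cdot|}(\Delta_\rho[0,-1])=0$, so the Leibniz rule implies that $\Delta_\rho[0,-1]\rtimes\pi$ stays $\rho|\cdot|\inv$-reduced when $\pi$ is, and the iterability observation follows. But the argument does not close. You reduce everything to the irreducibility of $soc\big((\rho|\cdot|^0)^k\rtimes\pi_k\big)$ together with its multiplicity-one occurrence inside $\Delta_\rho[0,-1]^k\rtimes\pi$, correctly name this the technical heart, and then do not prove it. That claim is precisely the content of the theorem: the entire reason Atobe--M\'{i}nguez introduce the $\Delta_\rho[0,-1]$-derivative is that $\rho|\cdot|^0$-derivatives and socles are badly behaved in general --- the ``R-group'' splitting you flag --- and showing that $\rho|\cdot|\inv$-reducedness of $\pi$ removes that pathology is the nontrivial point. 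Gesturing at Leibniz-rule bookkeeping being ``rigid enough'' is a hope, not a proof. Moreover, your passage to assertion~(1) --- that $D^{(k)}_{\Delta_\rho[0,-1]}(\pi)$ is recovered as a highest $\rho|\cdot|^0$-derivative and is hence irreducible ``by the same analysis'' --- leans on the very irreducibility you left open, so as written that step is circular; and (3) inherits the gap from (1) and (2). You have correctly located where the difficulty lives, but you have only restated the problem there, not solved it.
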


\subsection{Local Arthur packets}
Recall that a local Arthur parameter
$$\psi: W_F \times \SL^D_2(\mathbb{C}) \times \SL^A_2(\mathbb{C}) \rightarrow \widehat{G}_n(\BC)$$
is a direct sum of irreducible representations

\begin{equation}\label{eq decomp psi +}
  \psi = \bigoplus_{i=1}^r \phi_i|\cdot|^{x_i} \otimes S_{a_i} \otimes S_{b_i},  
\end{equation}
satisfying the following conditions:
\begin{enumerate}
    \item [(1)]$\phi_i$ is an irreducible representation of $W_F$ with bounded image consisting of semi-simple elements;
    \item [(2)] $x_i \in \R$ and $|x_i|<\half{1}$;
    \item [(3)]the restrictions of $\psi$ to the two copies of $\SL_2(\mathbb{C})$ are analytic, $S_k$ is the $k$-dimensional irreducible representation of $\SL_2(\mathbb{C})$, and, 
    $$\sum_{i=1}^r \dim(\phi_i)a_ib_i = N:= 
\begin{cases}
2n+1 & \text{ when } G_n=\Sp_{2n}(F),\\
2n & \text{ when } G_n=\SO_{2n+1}(F).
\end{cases}
$$ 
\end{enumerate}
We remark that the bound $|x_i|<\half{1}$ follows from the trivial bound of the Ramanujan conjecture of general linear groups.

Two local Arthur parameters are called equivalent if they are conjugate under $\widehat{G}_n(\BC)$. By abuse of notation, we do not distinguish $\psi$ with its equivalence class in this paper. We let $\Psi^{+}(G_n)$ be the equivalence class of local Arthur parameters, and $\Psi(G_n)$ the subset of $\Psi^+(G_n)$ consisting of local Arthur parameters $\psi$ whose restriction to $W_F$ is bounded. In other words, $\psi\in\Psi(G_n)$ if and only if $x_i=0$ for $i=1,\dots, r$ in the decomposition \eqref{eq decomp psi +}.

By the local Langlands correspondence for $\GL_{d_i}(F)$, the bounded irreducible representations $\phi_i$ of $W_F$ can be identified with an irreducible unitary supercuspidal representations $\rho_i$ of $\GL_{d_i}(F)$ (\cite{Hen00, HT01, Sch13}). Consequently, we often write
\begin{equation}\label{A-param decomp}
  \psi = \bigoplus_{\rho}\left(\bigoplus_{i\in I_\rho} \rho|\cdot|^{x_i} \otimes S_{a_i} \otimes S_{b_i}\right),  
\end{equation}
where the first sum runs over 
irreducible unitary supercuspidal representations $\rho$ of $\GL_n(F)$, $n \in \mathbb{Z}_{\geq 1}$. Occasionally, we also write $\rho|\cdot|^{x}\otimes S_a=\rho|\cdot|^x\otimes S_a \otimes S_1.$ 

Let $\psi$ be a local Arthur parameter as in \eqref{A-param decomp}, we say that $\psi$ is of \emph{good parity} if $\psi \in \Psi(G_n)$ (i.e., $x_i=0$ for all $i$) and every summand $\rho \otimes S_{a_i} \otimes S_{b_i}$ is self-dual and of the same type as $\psi.$ That is, $\rho$ is self-dual and
\begin{itemize}
    \item if $G_n=\Sp_{2n}(F)$ and $\rho$ is orthogonal (resp. symplectic), then $a_i+b_i$ is even (resp. odd);
    \item if $G_n=\SO_{2n+1}(F)$ and $\rho$ is orthogonal (resp. symplectic), then $a_i+b_i$ is odd, (resp. even).
\end{itemize}
We let $\Psi_{gp}(G_n)$ denote the subset of $\Psi(G_n)$ consisting of local Arthur parameters of good parity.

Let $\psi \in \Psi^{+}(G_n).$ From the decomposition \eqref{A-param decomp}, define a subrepresentation $\psi_{nu,>0}$ of $\psi$ by
\[ \psi_{nu,>0}:= \bigoplus_{\rho}\left(\bigoplus_{\substack{i\in I_\rho,\\ x_i>0}} \rho|\cdot|^{x_i} \otimes S_{a_i} \otimes S_{b_i}\right). \]
Since the image of $\psi$ is contained in  $\widehat{G}_n(\BC)$ and $\psi$ is self-dual, $\psi$ also contains $(\psi_{nu,>0})^{\vee}$. Then, define $\psi_{u} \in \Psi(G_m)$ for some $m\leq n$ by 
\begin{align}\label{eq def of psi_u}
    \psi= \psi_{nu,>0} \oplus \psi_u \oplus (\psi_{nu,>0})^{\vee}.
\end{align}
Equivalently, we have
\[ \psi_{u}:= \bigoplus_{\rho}\left(\bigoplus_{\substack{i\in I_\rho,\\ x_i=0}} \rho \otimes S_{a_i} \otimes S_{b_i}\right). \]

In \cite[Theorem 1.5.1]{Art13}, for a local Arthur parameter $\psi \in \Psi(G_n)$, Arthur constructed a finite multi-set $\Pi_\psi$ consisting of irreducible unitary representations of $G_n.$ We call $\Pi_\psi$ the \emph{local Arthur packet} of $\psi.$ M{\oe}glin gave another construction of $\Pi_\psi$ and showed that it is multiplicity-free (\cite{Moe11}). For $\psi \in \Psi^+(G_n)$, Arthur defined the local Arthur packet $\Pi_\psi$ (\cite[(1.5.1)]{Art13}), by
\begin{align}\label{eq def packet +}
    \Pi_{\psi}:= \{ \tau_{\psi_{nu,>0}} \rtimes \pi_u \ | \ \pi_{u} \in \Pi_{\psi_u}   \},
\end{align}
where $\tau_{\psi_{nu,>0}}$ is the following irreducible representation of a general linear group
$$
\tau_{\psi_{nu,>0}}=\bigtimes_\rho\bigtimes_{ \substack{i\in I_\rho,\\ x_i>0}}\begin{pmatrix}
\frac{a_i-b_i}{2}+x_i & \cdots & \frac{a_i+b_i}{2}-1+x_i \\
\vdots & \ddots & \vdots \\
\frac{-a_i-b_i}{2}+1+x_i & \cdots & \frac{b_i-a_i}{2}+x_i
\end{pmatrix}_{\rho}.
$$
Since $|x_i|<\half{1}$ in the decomposition \eqref{A-param decomp}, the parabolic induction in \eqref{eq def packet +} is always irreducible by \cite[Proposition 5.1]{Moe11b} (see also \cite[Theorem 9.3(6)]{Jan97}, \cite[Proposition 3.2(i)]{Tad09}).  We say that an irreducible representation $\pi$ of $G_n$ is \emph{of Arthur type} if $\pi\in\Pi_\psi$ for some local Arthur parameter $\psi \in \Psi^+(G_n)$.

Next, we further decompose $\psi_{u}$. Suppose $\rho \otimes S_a \otimes S_b$ is an irreducible summand of $\psi_u$ that is either not self-dual, or self-dual but not of the same type as $\psi$. Then $\psi$ must also contain another summand $(\rho\otimes S_a\otimes S_b)^{\vee}=\rho^{\vee} \otimes S_a \otimes S_b$. Therefore, we may choose a subrepresentation $\psi_{np}$ of $\psi_u$ such that
\begin{align}\label{eq decomp of psi_u}
     \psi_{u}= \psi_{np} \oplus \psi_{gp} \oplus \psi_{np}^{\vee},
\end{align}
where $\psi_{gp} $ is of good parity, and any irreducible summand of $\psi_{np}$ is either not self-dual or self-dual but not of the same type as $\psi$. In \cite{Moe06a}, M{\oe}glin constructed the local Arthur packet $\Pi_{\psi_u}$ from $\Pi_{\psi_{gp}}$, which we record below.

\begin{thm}[{\cite[Theorem 6]{Moe06a}, \cite[Proposition 8.11]{Xu17b}}]\label{thm reduction to gp}
Let $\psi_u \in \Psi(G_n)$ with a choice of decomposition \eqref{eq decomp of psi_u}. Write
\[ \psi_{np}= \bigoplus_{\rho} \left( \bigoplus_{i \in I_{\rho}} \rho \otimes S_{a_i} \otimes S_{b_i}\right),  \]
and consider the following irreducible parabolic induction
$$
\tau_{\psi_{np}}=\bigtimes_\rho\bigtimes_{i\in I_\rho}\begin{pmatrix}
\frac{a_i-b_i}{2} & \cdots & \frac{a_i+b_i}{2}-1 \\
\vdots & \ddots & \vdots \\
\frac{-a_i-b_i}{2}+1 & \cdots & \frac{b_i-a_i}{2}
\end{pmatrix}_{\rho}.
$$
Then for any $\pi_{gp}\in\Pi_{\psi_{gp}}$ the induced representation $\tau_{\psi_{np}}\rtimes\pi_{gp}$ is irreducible, independent of choice of $\psi_{np}$. Moreover,
$$
\Pi_\psi=\{\tau_{\psi_{np}}\rtimes\pi_{gp} \, | \, \pi\in\Pi_{\psi_{gp}}\}.
$$
\end{thm}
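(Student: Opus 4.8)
The plan is to induct on the number of irreducible summands of $\psi_{np}$, which reduces the theorem to the base case where $\psi_{np} = \rho \otimes S_a \otimes S_b$ is a single summand; then $\psi_u$ contains the conjugate pair $\rho \otimes S_a \otimes S_b$ and $\rho^{\vee} \otimes S_a \otimes S_b$, with $\rho$ either not self-dual, or self-dual of type opposite to that of $\psi$. In this base case $\tau_{\psi_{np}}$ is the shifted Speh representation attached to the $a \times b$ rectangle of exponents centered on the segment of $\rho \otimes S_a \otimes S_b$, and it suffices to establish: (i) $\tau_{\psi_{np}} \rtimes \pi_{gp}$ is irreducible for every $\pi_{gp} \in \Pi_{\psi_{gp}}$; (ii) the assignment $\pi_{gp} \mapsto \tau_{\psi_{np}} \rtimes \pi_{gp}$ is injective; (iii) its image is exactly $\Pi_{\psi_u}$; and (iv) the result is independent of the choice of $\psi_{np}$.

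For (i) I would invoke the reducibility criteria for parabolic inductions $\sigma \rtimes \pi$ with $\sigma$ irreducible on a general linear factor and $\pi \in \Pi(G_m)$, in the form provided by the work of Jantzen and Tadi\'c together with the derivative--socle formalism of Theorem \ref{thm derivative-socle}: such an induction is irreducible once $\tau_{\psi_{np}}$ is itself irreducible (which it is, being of Speh type) and the cuspidal support of $\tau_{\psi_{np}}$, together with that of its contragredient, meets none of the reducibility points determined by the cuspidal support of $\pi_{gp}$. Since $\rho$ is not self-dual of the same type as $\psi$, while $\psi_{gp}$ is of good parity, no such reducibility point occurs, and (i) follows. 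Statement (ii) then follows by recovering $\pi_{gp}$ from $\tau_{\psi_{np}} \rtimes \pi_{gp}$ as an iterated $\rho|\cdot|^x$-derivative along the exponents appearing in $\tau_{\psi_{np}}$: by the Leibniz rule of Lemma \ref{lem Leibniz rule} and the fact that these exponents do not lie in the cuspidal support attached to $\psi_{gp}$, the derivatives strip off $\tau_{\psi_{np}}$ and return $\pi_{gp}$.

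The core of the argument is (iii). One route is via Arthur's definition through endoscopic character identities: $\psi_u$ is obtained from $\psi_{gp}$ by adjoining $\chi \oplus \chi^{\vee}$ with $\chi = \rho \otimes S_a \otimes S_b$; the component group $\mathcal{S}_{\psi_u}$ is canonically identified with $\mathcal{S}_{\psi_{gp}}$, because a summand that is not self-dual, or self-dual of the wrong type, contributes no $\mathbb{Z}/2$ factor; and stable transfer from the relevant twisted general linear endoscopic data is compatible with parabolic induction. Comparing the two sides of the character identity, with the $\GL$-part of $\psi_u$ producing precisely the Speh factor $\tau_{\psi_{np}}$, yields $\Pi_{\psi_u} = \{\tau_{\psi_{np}} \rtimes \pi_{gp} : \pi_{gp} \in \Pi_{\psi_{gp}}\}$ with the $\mathcal{S}$-character labelling preserved. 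A more hands-on route is to use M\oe glin's explicit construction of $\Pi_{\psi_u}$ (recalled in \S\ref{sec atob refor}), which builds the packet of $\psi_u$ from that of $\psi_{gp}$ by exactly this parabolic induction, so that (iii) is essentially built into the construction and what remains is to check independence from the auxiliary combinatorial data. Finally, (iv) follows from the self-duality of every irreducible representation of $G_n = \Sp_{2n}(F)$ or $\SO_{2n+1}(F)$: one has $\tau_{\psi_{np}} \rtimes \pi_{gp} \cong (\tau_{\psi_{np}} \rtimes \pi_{gp})^{\vee} \cong \tau_{\psi_{np}^{\vee}} \rtimes \pi_{gp}$, and any two choices of $\psi_{np}$ differ by swapping conjugate summands and reordering, both harmless once irreducibility is known.

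The step I expect to be the main obstacle is (iii): on the endoscopic side, making the comparison of character identities rigorous and tracking the $\mathcal{S}_\psi$-character labelling through parabolic induction; on the M\oe glin side, verifying that the construction of $\Pi_{\psi_u}$ genuinely does not depend on the chosen decomposition \eqref{eq decomp of psi_u}. By contrast the irreducibility in (i), while requiring care with the reducibility criteria, should be comparatively routine given the derivative--socle machinery already set up.
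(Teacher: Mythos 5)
First, a point of comparison: the paper does not prove Theorem \ref{thm reduction to gp} at all. It is recorded as a known result, attributed to \cite[Theorem 6]{Moe06a} and \cite[Proposition 8.11]{Xu17b}, so the "proof" in the paper is a citation; there is no internal argument to match your sketch against. Your overall architecture (reduce to one wrong-parity summand, prove irreducibility, injectivity, exhaustion, independence of the choice of $\psi_{np}$) is sensible, and your endoscopic route for (iii) --- identifying $\mathcal{S}_{\psi_u}$ with $\mathcal{S}_{\psi_{gp}}$ and using compatibility of the twisted character identities with parabolic induction --- is indeed how the cited sources proceed. However, your fallback for (iii), namely "use M{\oe}glin's explicit construction, in which the statement is built in," is circular: the theorem \emph{is} the assertion that this induction step of M{\oe}glin's construction produces Arthur's packet $\Pi_{\psi_u}$, so it cannot be invoked as the proof. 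The genuinely hard content (which Xu's Proposition 8.11 supplies) is exactly the verification that the stable and endoscopic character identities characterizing $\Pi_{\psi_u}$ are satisfied by the set $\{\tau_{\psi_{np}}\rtimes\pi_{gp}\}$ with the labelling by $\widehat{\mathcal{S}}_{\psi_{gp}}\cong\widehat{\mathcal{S}}_{\psi_u}$; gesturing at "stable transfer is compatible with parabolic induction" leaves that core unproved.

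Two further concrete gaps. For (i), the criterion you state --- irreducibility once the cuspidal support of $\tau_{\psi_{np}}$ and its contragredient avoids the "reducibility points" of $\pi_{gp}$ --- is not a theorem in that generality; disjointness of cuspidal supports controls Jordan--H\"older content and commutation of factors, not irreducibility of $\tau\rtimes\pi_{gp}$ for a Speh representation $\tau$. The irreducibility here is precisely the content of results such as \cite[Proposition 5.1]{Moe11b} (which this paper cites for the parallel statement in Theorem \ref{thm red from nu to gp}), proved via normalized intertwining operators/Jacquet module arguments with the wrong-parity hypothesis as the key input. For (iv), your argument rests on the claim that every irreducible representation of $G_n$ is self-dual. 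That is true for $\SO_{2n+1}(F)$ (by MVW, since $O_{2n+1}=\SO_{2n+1}\times\{\pm 1\}$), but false in general for $\Sp_{2n}(F)$: MVW only gives $\pi^{\vee}\cong\pi^{\delta}$ for $\delta\in\mathrm{GSp}_{2n}(F)$ of similitude $-1$, which can be an outer automorphism when $-1\notin (F^{\times})^2$, and the contragredient can permute members of a packet (already visible for $\Sp_2=\SL_2$). Moreover, even granting self-duality of the induced representation, your chain needs $\pi_{gp}^{\vee}\cong\pi_{gp}$ as well. The standard fix is different: $\tau\rtimes\pi$ and $\tau^{\vee}\rtimes\pi$ always have the same image in the Grothendieck group for these groups, so once irreducibility is known they are isomorphic, and any two choices of $\psi_{np}$ (which differ by replacing summands with their duals) give the same representation.
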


Combined with \eqref{eq def packet +}, we obtain the following.

\begin{thm}[{\cite[Proposition 5.1]{Moe11b}}]\label{thm red from nu to gp}
Let $\psi\in\Psi^+(G_n)$ with decomposition $\psi=\psi_{nu,>0}+\psi_{np}+\psi_{gp}+\psi_{np}^\vee+\psi_{nu,>0}^\vee$ as above. Then, for any $\pi_{gp}\in\Pi_{\psi_{gp}},$ the induction $\tau_{\psi_{nu,>0}}\times\tau_{\psi_{np}}\rtimes\pi_{gp}$ is irreducible. As a consequence, \begin{equation}\label{non-unitary A-packet}
    \Pi_\psi=\{\tau_{\psi_{nu,>0}}\times\tau_{\psi_{np}}\rtimes\pi_{gp} \ | \ \pi_{gp}\in\Pi_{\psi_{gp}}\}.
\end{equation}
\end{thm}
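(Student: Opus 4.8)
The plan is to obtain Theorem~\ref{thm red from nu to gp} as a formal consequence of the two structural descriptions recalled just above: the definition \eqref{eq def packet +} of $\Pi_\psi$ in terms of $\Pi_{\psi_u}$, together with the irreducibility of $\tau_{\psi_{nu,>0}}\rtimes\pi_u$ for $\pi_u\in\Pi_{\psi_u}$ (coming from \cite[Proposition 5.1]{Moe11b}, \cite[Theorem 9.3(6)]{Jan97}, \cite[Proposition 3.2(i)]{Tad09}), and Theorem~\ref{thm reduction to gp} of M{\oe}glin and Xu describing $\Pi_{\psi_u}$ in terms of $\Pi_{\psi_{gp}}$. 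First I would fix notation: by \eqref{eq def of psi_u} and \eqref{eq decomp of psi_u}, the decomposition $\psi=\psi_{nu,>0}\oplus\psi_{np}\oplus\psi_{gp}\oplus\psi_{np}^\vee\oplus\psi_{nu,>0}^\vee$ is built in two stages, first peeling off $\psi_{nu,>0}\oplus\psi_{nu,>0}^\vee$ to isolate the bounded part $\psi_u\in\Psi(G_m)$, then splitting $\psi_u=\psi_{np}\oplus\psi_{gp}\oplus\psi_{np}^\vee$; so the $\psi_u$, $\psi_{gp}$, $\psi_{np}$ in the statement are precisely those to which Theorem~\ref{thm reduction to gp} applies.

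The main step is to compose the two reductions. By Theorem~\ref{thm reduction to gp} applied to $\psi_u$, for every $\pi_{gp}\in\Pi_{\psi_{gp}}$ the induced representation $\pi_u:=\tau_{\psi_{np}}\rtimes\pi_{gp}$ is irreducible, lies in $\Pi_{\psi_u}$, and every member of $\Pi_{\psi_u}$ arises this way. By \eqref{eq def packet +}, for every $\pi_u\in\Pi_{\psi_u}$ the induced representation $\tau_{\psi_{nu,>0}}\rtimes\pi_u$ is irreducible, and $\Pi_\psi=\{\tau_{\psi_{nu,>0}}\rtimes\pi_u \mid \pi_u\in\Pi_{\psi_u}\}$. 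Substituting $\pi_u=\tau_{\psi_{np}}\rtimes\pi_{gp}$ shows that $\tau_{\psi_{nu,>0}}\rtimes(\tau_{\psi_{np}}\rtimes\pi_{gp})$ is irreducible for every $\pi_{gp}\in\Pi_{\psi_{gp}}$ and that $\Pi_\psi=\{\tau_{\psi_{nu,>0}}\rtimes(\tau_{\psi_{np}}\rtimes\pi_{gp}) \mid \pi_{gp}\in\Pi_{\psi_{gp}}\}$. Finally, transitivity of parabolic induction (induction in stages) identifies $\tau_{\psi_{nu,>0}}\rtimes(\tau_{\psi_{np}}\rtimes\pi_{gp})$ with $\tau_{\psi_{nu,>0}}\times\tau_{\psi_{np}}\rtimes\pi_{gp}$, the representation obtained by normalized induction from the standard parabolic of $G_n$ with Levi $\GL_{d_1}(F)\times\GL_{d_2}(F)\times G_m$, where $d_1,d_2$ are the general linear ranks attached to $\psi_{nu,>0}$ and $\psi_{np}$. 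Since the left-hand side is irreducible, so is $\tau_{\psi_{nu,>0}}\times\tau_{\psi_{np}}\rtimes\pi_{gp}$, which gives both assertions of the theorem.

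I do not expect a genuine obstacle: all the analytic content sits in the cited inputs---Arthur's definition \eqref{eq def packet +} of the non-unitary packet, the irreducibility of $\tau_{\psi_{nu,>0}}\rtimes\pi_u$, and the M{\oe}glin--Xu reduction Theorem~\ref{thm reduction to gp}---and what remains is only the bookkeeping identification in the last step, which follows from transitivity of parabolic induction and does \emph{not} require $\tau_{\psi_{nu,>0}}\times\tau_{\psi_{np}}$ itself to be irreducible. The one point worth a word is that Theorem~\ref{thm reduction to gp} carries an ``independence of the choice of $\psi_{np}$'' clause; no analogous ambiguity arises for $\psi_{nu,>0}$, which is canonically determined by the sign condition $x_i>0$ in \eqref{A-param decomp}, so $\tau_{\psi_{nu,>0}}$ is well defined with no choice involved.
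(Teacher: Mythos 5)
Your argument is correct and is essentially the paper's own route: the theorem is obtained exactly by combining the definition \eqref{eq def packet +} of $\Pi_\psi$ (with the irreducibility of $\tau_{\psi_{nu,>0}}\rtimes\pi_u$ supplied by \cite[Proposition 5.1]{Moe11b}) with Theorem \ref{thm reduction to gp}, and then invoking transitivity of normalized parabolic induction. Your explicit remarks on induction in stages and on the canonicity of $\psi_{nu,>0}$ just spell out bookkeeping the paper leaves implicit.
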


Theorem \ref{thm red from nu to gp} reduces calculating intersections of local Arthur packets to the good parity case.
We give an analogous definition of good parity for representations.

\begin{defn}\label{def good parity reps}
We say an irreducible representation
\[ \pi= L(\Delta_{\rho_1}[x_1,y_1],\dots,\Delta_{\rho_r}[x_r,y_r]; \pi_{temp})\] 
of $G_n$ is of \emph{good parity} if the following conditions hold:
\begin{enumerate}
    \item [$\oldbullet$] The tempered representation $\pi_{temp}$ lies in $\Pi_{\psi_{temp}}$ for some $\psi_{temp} \in \Psi_{gp}(G_m)$.
    \item [$\oldbullet$] For $1 \leq i \leq r$, $x_i,y_i \in \half{1} \Z$ and $\rho_i \otimes S_{x_i-y_i+1}\otimes S_1 $ is self-dual of the same type as $\widehat{G}_n$.
\end{enumerate}
\end{defn}

Let $\pi \in \Pi_{\psi}$. By the construction of local Arthur packets in the good parity case and Theorem \ref{thm reduction to gp}, we have that $\pi$ is of good parity if and only if $\psi$ is of good parity. Here is an immediate corollary.

\begin{cor}[{\cite[Corollary 2.13(1)]{HLL22}}]\label{cor reduction from nu to gp} For $\pi \in \Pi(G_n)$, there exists $\psi \in \Psi^+(G_n)$ such that $\pi \in \Pi_{\psi}$ if and only if $\pi$ is of the form
    \begin{align*}
        \pi= \tau_{\psi_{nu,>0}} \times \tau_{\psi_{np}} \rtimes \pi_{gp},
    \end{align*}
    and $\pi_{gp} \in \Pi_{\psi_{gp}}$ for some $\psi_{gp} \in \Psi_{gp}(G_n)$. Moreover, we have
    \begin{align*}
        \Psi(\pi):=& \{ \psi \in \Psi^+(G_n)\ | \ \pi \in \Pi_{\psi}\}\\
        =& \{ \psi_{nu,>0} + \psi_{np} + \psi_{gp} + \psi_{np}^{\vee}+ \psi_{nu,>0} \ | \ \psi_{gp} \in \Psi(\pi_{gp})\}.
    \end{align*}
\end{cor}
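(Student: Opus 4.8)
The plan is to derive the statement as a direct combination of the three structural results recalled just above, namely Theorems \ref{thm reduction to gp} and \ref{thm red from nu to gp}, together with the cited result \cite[Corollary 2.13(1)]{HLL22}. First I would fix $\pi\in\Pi(G_n)$ and suppose $\pi\in\Pi_\psi$ for some $\psi\in\Psi^+(G_n)$. Decompose $\psi=\psi_{nu,>0}+\psi_{np}+\psi_{gp}+\psi_{np}^\vee+\psi_{nu,>0}^\vee$ as in \eqref{eq def of psi_u} and \eqref{eq decomp of psi_u}, and set $\pi_{gp}$ to be the representation of the appropriate smaller group $G_{n'}$ with $\pi=\tau_{\psi_{nu,>0}}\times\tau_{\psi_{np}}\rtimes\pi_{gp}$, which exists and is well-defined (independently of the choices of $\psi_{nu,>0}$ and $\psi_{np}$) by Theorem \ref{thm red from nu to gp}; this theorem also gives that $\pi_{gp}\in\Pi_{\psi_{gp}}$ with $\psi_{gp}\in\Psi_{gp}(G_{n'})$. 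Conversely, if $\pi$ has that form with $\pi_{gp}\in\Pi_{\psi_{gp}}$ for some good-parity $\psi_{gp}$, then reading \eqref{non-unitary A-packet} backwards shows $\pi\in\Pi_\psi$ for $\psi:=\psi_{nu,>0}+\psi_{np}+\psi_{gp}+\psi_{np}^\vee+\psi_{nu,>0}^\vee\in\Psi^+(G_n)$. This establishes the first equivalence.

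For the description of $\Psi(\pi)$, I would argue both inclusions. The inclusion $\supseteq$ is immediate from \eqref{non-unitary A-packet}: each $\psi_{gp}\in\Psi(\pi_{gp})$ produces a parameter $\psi_{nu,>0}+\psi_{np}+\psi_{gp}+\psi_{np}^\vee+\psi_{nu,>0}^\vee\in\Psi^+(G_n)$ whose packet contains $\tau_{\psi_{nu,>0}}\times\tau_{\psi_{np}}\rtimes\pi_{gp}=\pi$. For the reverse inclusion $\subseteq$, take any $\psi'\in\Psi(\pi)$, i.e. $\psi'\in\Psi^+(G_n)$ with $\pi\in\Pi_{\psi'}$. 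Applying Theorem \ref{thm red from nu to gp} to $\psi'$ gives $\pi=\tau_{\psi'_{nu,>0}}\times\tau_{\psi'_{np}}\rtimes\pi'_{gp}$ for some $\pi'_{gp}\in\Pi_{\psi'_{gp}}$. The point is then to compare this factorization with the one obtained for $\psi$: using the Langlands classification of $\pi$ — whose $L$-data are rigidly determined by $\pi$ — one reads off that the non-good-parity "building blocks" $\tau_{\psi'_{nu,>0}}$ and $\tau_{\psi'_{np}}$ coincide with $\tau_{\psi_{nu,>0}}$ and $\tau_{\psi_{np}}$ (equivalently, that $\psi'_{nu,>0}=\psi_{nu,>0}$ and $\psi'_{np}=\psi_{np}$ as subrepresentations, up to the allowed ambiguity in the decomposition \eqref{eq decomp of psi_u}), and hence $\pi'_{gp}=\pi_{gp}$. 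Therefore $\psi'_{gp}\in\Psi(\pi_{gp})$ and $\psi'=\psi_{nu,>0}+\psi'_{gp}+\psi_{np}+\psi_{np}^\vee+\psi_{nu,>0}^\vee$ has the asserted form.

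The only genuinely nontrivial point — and the step I would be most careful about — is the rigidity/uniqueness claim used in the $\subseteq$ direction: that the decomposition $\pi=\tau_{\psi_{nu,>0}}\times\tau_{\psi_{np}}\rtimes\pi_{gp}$ pins down $\pi_{gp}$ (and the non-good-parity part) uniquely given $\pi$. This is exactly the content packaged in \cite[Corollary 2.13(1)]{HLL22}, which the corollary cites, so in the write-up I would simply invoke that reference rather than reprove it; the uniqueness there ultimately rests on the fact that the supercuspidal support and the Langlands data of $\pi$ determine which segments must be "peeled off" to reach a good-parity representation on a smaller classical group. All remaining steps are bookkeeping with the multi-set operations and the explicit shapes of $\tau_{\psi_{nu,>0}}$, $\tau_{\psi_{np}}$ recalled above, and involve no new ideas.
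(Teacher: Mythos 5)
Your proposal is correct and follows essentially the same approach as the paper: reduce to the good-parity case via Theorems~\ref{thm reduction to gp} and \ref{thm red from nu to gp}, then appeal to the rigidity of the decomposition. The only thing to flag is a mild circularity of presentation: you propose to invoke \cite[Corollary 2.13(1)]{HLL22} for the rigidity step, but that is precisely the result being restated here, so a self-contained proof would have to carry out the sketch you give --- namely, that the supercuspidal support and $L$-data of $\pi$ unambiguously separate the good-parity part from the rest (the ``bad parity'' segments have exponents of the wrong parity or non-self-dual/wrong-type cuspidal base, so they cannot be reassigned to $\pi_{gp}$), whence $\psi_{nu,>0}$, $\psi_{np}+\psi_{np}^{\vee}$, and $\pi_{gp}$ are all pinned down by $\pi$ itself. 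That argument is sound, and with it your $\subseteq$ inclusion goes through as described.
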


\subsection{Parametrization of tempered spectrum}\label{sec tempered parametrization}

Let $\psi \in \Psi(G_n)$. An important ingredient of the construction of local Arthur packets is a map
\begin{align}\label{eq character of Arthur packet}
    \Pi_{\psi} &\longrightarrow \widehat{\mathcal{S}}_{\psi},\\
   \nonumber \pi & \longmapsto \langle \cdot, \pi \rangle,
\end{align}
where $\widehat{\mathcal{S}}_{\psi}$ denotes Pontryagin dual of the \emph{component group}
\[  \mathcal{S}_{\psi}:= \pi_0( \mathrm{Cent}_{\widehat{G}_n(\BC)}(\mathrm{Im}(\psi))/Z(\widehat{G}_n(\BC))). \]
The map \eqref{eq character of Arthur packet} is neither injective nor surjective in general. However, when $\psi$ is tempered, it is a bijection (see Theorem \ref{thm Arthur tempered} below). In this subsection, we recall the combinatorial description of $\widehat{\mathcal{S}}_{\psi}$ in \cite[\S 2]{Xu17b} and use it to give a parametrization of the tempered spectrum of $G_n$.

Write $\psi=\psi_{np} + \psi_{gp}+ \psi_{np}^{\vee}$. There is a bijection between $ \mathcal{S}_{\psi}$ and $\mathcal{S}_{\psi_{gp}}$. Therefore, to describe $\widehat{\mathcal{S}}_{\psi}$, we may assume $\psi \in \Psi_{gp}(G_n)$. Write
\begin{align}\label{eq decomp psi parametrization of tempered}
    \psi= \bigoplus_{\rho} \bigoplus_{i \in I_{\rho}} \rho \otimes S_{a_i} \otimes S_{b_i}.
\end{align}
First, we consider the \emph{enhanced component group} of $\psi$ defined by
$$
\mathcal{A}_\psi=\bigoplus_{\rho}\bigoplus_{i\in I_\rho}(\mathbb{Z}/2\mathbb{Z}) \alpha_{\rho,i}.
$$
Here, $\mathcal{A}_\psi$ is the finite vector space over $\mathbb{Z}/2\mathbb{Z}$ with basis $\alpha_{\rho,i}$ corresponding to the summands $\rho \otimes S_{a_i} \otimes S_{b_i}$ of Equation (\ref{eq decomp psi parametrization of tempered}). It is possible that $\rho \otimes S_{a_i} \otimes S_{b_i}=\rho \otimes S_{a_j} \otimes S_{b_j}$ for some $i\neq j\in I_\rho$; however, we distinguish these summands in $\mathcal{A}_\psi,$ i.e.,  $\alpha_{\rho,i}\neq\alpha_{\rho,j}$ in $\mathcal{A}_\psi.$ The \emph{central element} of $\mathcal{A}_\psi$ is $z_\psi:=\sum_\rho\sum_{i\in I_\rho}\alpha_{\rho,i}.$

The component group $\mathcal{S}_\psi$ can be identified with the quotient of $\mathcal{A}_\psi$ by the subgroup generated by the central element and the elements $\alpha_{\rho,i}+\alpha_{\rho,j}$ such that $\rho \otimes S_{a_i} \otimes S_{b_i}=\rho \otimes S_{a_j} \otimes S_{b_j}.$ Hence, we identify $\widehat{\mathcal{S}}_{\psi}$ with the set of functions $\varepsilon$ from the irreducible summands of $\psi$ to $\{\pm 1\}$ that satisfy
\begin{enumerate}
    \item [$\oldbullet$] $\varepsilon(\rho \otimes S_{a_i} \otimes S_{b_i})=\varepsilon(\rho \otimes S_{a_j} \otimes S_{b_j})$ if $\rho \otimes S_{a_i} \otimes S_{b_i}=\rho \otimes S_{a_i} \otimes S_{b_i}$, and
    \item [$\oldbullet$] $\prod_{\rho} \prod_{i \in I_{\rho}} \varepsilon(\rho \otimes S_{a_i}\otimes S_{b_i})=1$.
\end{enumerate}

The following theorem is Arthur's classification of the tempered representations.

\begin{thm}[{\cite[Theorem 1.5.1]{Art13}}]\label{thm Arthur tempered}
Any irreducible tempered representation of $G_n$ lies in $\Pi_\psi$ for some tempered local Arthur parameter $\psi.$ Moreover, if $\psi_1 $ and $\psi_2$ are two non-equivalent tempered local Arthur parameters, then 
$$
\Pi_{\psi_1}\cap\Pi_{\psi_2}=\emptyset.
$$
Finally, if one fixes a choice of Whittaker datum for $G_n$ and $\psi$ is tempered, then there is a bijective map between the tempered local Arthur packet $\Pi_{\psi}$ and $\widehat{\mathcal{S}}_\psi.$
\end{thm}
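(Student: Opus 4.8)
The final statement is Arthur's classification of the tempered spectrum of $G_n$, proved in \cite{Art13} as the tempered case of his local classification theorem; consequently any honest proof must rest on the stabilization of the Arthur--Selberg trace formula for $G_n$ and its twisted analogue for $\GL_N$. The plan is to follow Arthur's global-to-local strategy. First I would reduce to the good parity case: writing a bounded $\psi = \psi_{np}\oplus\psi_{gp}\oplus\psi_{np}^{\vee}$ as in \eqref{eq decomp of psi_u} (for tempered $\psi$ one has $\psi=\psi_u$), the part $\psi_{np}$ contributes an essentially square-integrable representation $\tau_{\psi_{np}}$ of a general linear factor, and by Theorem \ref{thm reduction to gp} the packet $\Pi_\psi$ is obtained from $\Pi_{\psi_{gp}}$ by inducing with $\tau_{\psi_{np}}$; since this induction is irreducible and $\mathcal{S}_\psi \cong \mathcal{S}_{\psi_{gp}}$, existence, disjointness, and the parametrization by $\widehat{\mathcal{S}}_\psi$ all transport from the good parity case. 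So one may assume $\psi = \bigoplus_i \rho_i\otimes S_{a_i}$ is self-dual of good parity, i.e.\ a sum of discrete series parameters for general linear factors.

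Second, I would construct the packet and the pairing. For such $\psi$ the associated $\theta$-stable standard module $\pi_{\psi}^{\GL}$ on $\GL_N(F)$ has a twisted character defining a stable distribution; Arthur's spectral transfer (whose existence requires the twisted fundamental lemma and the transfer of the full Hecke algebra) pulls this back to an invariant distribution on $G_n(F)$, which by the local trace formula is a finite non-negative integral combination $\sum_\pi \Theta_\pi$ of irreducible tempered characters --- this set of $\pi$ is $\Pi_\psi$. Running the same construction with the elliptic endoscopic data indexed by $s \in \mathcal{S}_\psi$ produces the signs $\langle s, \pi\rangle$ and the endoscopic character relations. The content one must still supply --- that these local identities hold with exactly this shape --- is extracted from the global comparison.

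Third comes the global input. One globalizes $\psi$ to a generic global parameter $\dot\psi$ for a group $\dot G$ over a number field with $\dot G_{v_0}=G_n$, $\dot\psi_{v_0}=\psi$, and controlled (unramified or Steinberg) local behavior elsewhere; Arthur's stable multiplicity formula together with the term-by-term match of the stabilized trace formula for $\dot G$ against the twisted trace formula for $\GL_N$ forces the existence of all local packets with their character relations and the multiplicity-free decomposition of the global packet indexed by characters of the global component group. Specializing at $v_0$, and varying the auxiliary parameter enough, shows $\Pi_\psi$ is well-defined, the map $\pi\mapsto\langle\cdot,\pi\rangle$ into $\widehat{\mathcal{S}}_\psi$ is defined, and --- using Shelstad's real-place theory as a base case, propagated globally, and fixing a Whittaker datum to pin down the generic member --- that it is a bijection. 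Disjointness is then formal: distinct tempered $\psi_1,\psi_2$ have distinct $\GL_N$-parameters, so the transferred stable characters of $\Pi_{\psi_1}$ and $\Pi_{\psi_2}$ are linearly independent; combined with linear independence of the characters inside each packet and of all tempered characters of $G_n$, this gives $\Pi_{\psi_1}\cap\Pi_{\psi_2}=\emptyset$, and exhaustion of the tempered dual follows by the same counting.

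The main obstacle is unquestionably the stabilization of the twisted trace formula for $\GL_N$ and of the ordinary trace formula for $G_n$, together with the needed cases of the (weighted) fundamental lemma and the local intertwining relation: this is the technical core of \cite{Art13}, and there is no shortcut. Everything else --- the reduction to good parity, the bookkeeping of the enhanced component group, and the linear independence of characters --- is comparatively formal by contrast.
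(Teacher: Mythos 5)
This statement is not proved in the paper at all: it is quoted verbatim from \cite[Theorem 1.5.1]{Art13}, and your outline is a faithful summary of Arthur's actual argument there (twisted endoscopic transfer from $\GL_N$, stabilized ordinary and twisted trace formulas, the stable multiplicity formula and local intertwining relation, Whittaker normalization to pin down the pairing with $\widehat{\mathcal{S}}_\psi$, and character-theoretic linear independence for disjointness and exhaustion). So your proposal takes essentially the same route as the source the paper relies on, and correctly identifies that no shortcut around the trace-formula machinery is available.
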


Hereinafter, we fix a choice of Whittaker datum for $G_n.$ When $\psi$ is tempered and of good parity, we write $\pi(\psi,\varepsilon)$ for the representation in $\Pi_\psi$ corresponding to $\varepsilon\in\widehat{\mathcal{S}}_\psi$ via the bijection in Theorem \ref{thm Arthur tempered}.

\section{Atobe's reformulation}\label{sec atob refor}
In this section, we recall the main definitions and results of \cite{Ato20b} for the construction of local Arthur packets of good parity for $G_n$. We also recall the operators and the main results of \cite{HLL22}.

We fix the following notation throughout this section. Let $\psi$ be any local Arthur parameter of good parity of $G_n$ with decomposition 
\[ \psi= \bigoplus_{\rho} \bigoplus_{i \in I_{\rho}} \rho \otimes S_{a_i} \otimes S_{b_i}. \]
 We set $A_i=\frac{a_i+b_i}{2}-1$ and $B_i=\frac{a_i-b_i}{2}$ for $i\in I_\rho.$ 
 
A total order $>_\psi$ on $I_\rho$ is called \emph{admissible} if satisfies:
\[
\tag{$P$}
\text{
For $i,j \in I_\rho$, 
if $A_i > A_j$ and $B_i > B_j$, 
then $i >_\psi j$.
}
\]
Sometimes we consider an order $>_\psi$ on $I_\rho$ satisfying:
\[
\tag{$P'$}
\text{
For $i,j \in I_\rho$, 
if $B_i > B_j$, 
then $i >_\psi j$.
}
\]
Note that ($P'$) implies ($P$). Often, we write $>$ instead of $>_\psi$ when the admissible order is fixed. 

Suppose an admissible order for $\psi$ is fixed. We define the collection of ordered multi-sets 
$$\supp(\psi) := \cup_{\rho}\{ [A_i,B_i]_{\rho} \}_{i \in (I_\rho,>)},
$$
called the support of $\psi$. Note that $\supp(\psi)$ depends implicitly on the fixed admissible order.

\subsection{Extended multi-segments and associated representations}

In this subsection, we recall Atobe's parametrization of local Arthur packets from \cite{Ato20b}.

\begin{defn} [{\cite[Definition 3.1]{Ato20a}}]
(Extended multi-segments)\label{def multi-segment}
\begin{enumerate}
\item
An \emph{extended segment} is a triple $([A,B]_\rho, l, \eta)$,
where
\begin{itemize}
\item
$[A,B]_\rho = \{\rho|\cdot|^A, \rho|\cdot|^{A-1}, \dots, \rho|\cdot|^B \}$ is a segment 
for an irreducible unitary supercuspidal representation $\rho$ of some $\GL_d(F)$; 
\item
$l \in \Z$ with $0 \leq l \leq \frac{b}{2}$, where $b = \#[A,B]_\rho = A-B+1$; 
\item
$\eta \in \{\pm1\}$. 
\end{itemize}

\item
An \emph{extended multi-segment} for $G_n$ is 
an equivalence class (via the equivalence defined below) of multi-sets of extended segments 
\[
\EE = \cup_{\rho}\{ ([A_i,B_i]_{\rho}, l_i, \eta_i) \}_{i \in (I_\rho,>)}
\]
such that 
\begin{itemize}
\item
$I_\rho$ is a totally ordered finite set with a fixed admissible total order $>$;

\item
$A_i + B_i \geq 0$ for all $\rho$ and $i \in I_\rho$; 

\item
as a representation of $W_F \times \SL_2(\BC) \times \SL_2(\BC)$, 
\[
\psi_{\EE} = \bigoplus_\rho \bigoplus_{i \in I_\rho} \rho \otimes S_{a_i} \otimes S_{b_i} 
\]
where $(a_i, b_i) = (A_i+B_i+1, A_i-B_i+1)$,
is a local Arthur parameter for $G_n$ of good parity. We shall denote $\psi_{\EE}$ the local Arthur parameter associated with $\EE$. 
\item The sign condition
\begin{align}\label{eq sign condition}
\prod_{\rho} \prod_{i \in I_\rho} (-1)^{[\frac{b_i}{2}]+l_i} \eta_i^{b_i} = 1
\end{align}
holds.
\end{itemize}

\item
Two extended segments $([A,B]_\rho, l, \eta)$ and $([A',B']_{\rho'}, l', \eta')$ are \emph{weakly equivalent} 
if 
\begin{itemize}
\item
$[A,B]_\rho = [A',B']_{\rho'}$; 
\item
$l = l'$; and 
\item
$\eta = \eta'$ whenever $l = l' < \frac{b}{2}$. 
\end{itemize}
Two extended multi-segments 
$\EE = \cup_{\rho}\{ ([A_i,B_i]_{\rho}, l_i, \eta_i) \}_{i \in (I_\rho,>)}$ 
and 
$\EE' = \cup_{\rho}\{ ([A'_i,B'_i]_{\rho}, l'_i, \eta'_i) \}_{i \in (I_\rho,>)}$ 
are \emph{weakly equivalent}
if for any $\rho$ and $i \in I_\rho$, the extended segments $([A_i,B_i]_\rho, l_i, \eta_i)$ and $([A'_i,B'_i]_{\rho}, l'_i, \eta'_i)$ are weakly equivalent.

\item
We define the \emph{support} of $\EE$ to be the collection of ordered multi-sets 
\[
\supp(\EE) = \cup_{\rho}\{ [A_i,B_i]_{\rho} \}_{i \in (I_\rho,>)}.
\]
\end{enumerate}
\end{defn}

If the admissible order $>$ is clear in the context, for $k \in I_{\rho}$, we often denote $k+1 \in I_{\rho}$ to be the unique element adjacent with $k$ and $k+1>k$.

For each extended multi-segment $\EE$, Atobe associated a representation $\pi(\EE)$, which is irreducible or zero, see \cite[\S3.2]{Ato20b}. The $L$-data of $\pi(\EE)$ can be computed explicitly by the algorithms in \cite{AM20}. In this paper, we only need the following special case, where we have a formula for $\pi(\EE)$. 
\begin{thm}[{\cite[Theorem 9.5]{HLL22}}] \label{thm (L)}
  We say an extended multi-segment $\EE= \cup_{\rho}([A_i,B_i]_{\rho},l_i,\eta_i)\}_{i \in (I_{\rho},>)}$ satisfies $(L)$ if the following conditions hold.
    \begin{enumerate}
        \item [$\oldbullet$] If $i < j \in I_{\rho}$, then $A_i+B_i \leq A_j+B_j$.
        \item [$\oldbullet$] For any $i \in I_{\rho}$, $l_i= \lfloor \half{A_i-B_i+1}\rfloor$.
        \item [$\oldbullet$] If $A_i+B_i=A_j+B_j$ and $A_i-B_i$ is even, then $\eta_i=\eta_j$.
    \end{enumerate}
   If $\EE$ satisfies $(L)$, then $\pi(\EE)\neq 0$. Moreover, we have
    \[ \pi(\EE)= L(\Delta_{\rho_1}[x_1,y_1],\ldots, \Delta_{\rho_f}[x_f,y_f]; \pi(\phi,\varepsilon)),\]
    where
    \begin{enumerate}
        \item [$\oldbullet$] as a multi-set, 
        \[\{[x_i,y_i]_{\rho_i}\}_{i=1,\ldots, f}= \sum_{\rho} \sum_{i \in I_{\rho}} \{ [B_i,-A_i]_{\rho}, [B_i+1,-A_i+1]_{\rho},\ldots, [B_i+l_i, -A_i+l_i]_{\rho}   \}; \]
        \item [$\oldbullet$] the tempered $L$-parameter $\phi$ is given by
        \[ \phi= \bigoplus_{\rho} \bigoplus_{ \substack{i \in I_{\rho},\\{2 | (A_i-B_i)}}} \rho \otimes S_{ A_i+B_i+1 },\]
        and $\varepsilon(\rho\otimes S_{ A_i+B_i+1 })= \eta_i$ for any $i \in I_{\rho}$ such that $A_i-B_i$ is even.
    \end{enumerate}
    In particular, $\Pi_{\phi_{\psi}}= \{\pi(\EE)\ | \ \psi_{\EE}= \psi \text{ and }\EE \text{ satisfies (L).}\}$
\end{thm}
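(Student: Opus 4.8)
The plan is to combine Atobe's explicit construction of $\pi(\EE)$ (from \cite{Ato20b}) with the combinatorial shape forced by condition $(L)$, and to identify the result with the asserted Langlands data. First I would recall the recursive definition of $\pi(\EE)$: one orders $I_\rho$ by the admissible order, and at each step peels off the data attached to the maximal index, applying the derivative/socle operators of \S\ref{sec: der and socle} together with Atobe's ``$\psi$-derivative'' operations. The key observation is that under $(L)$ — where $l_i$ is taken to be $\lfloor (A_i-B_i+1)/2\rfloor$, the \emph{maximal} allowed value, and the parameters are arranged so that $A_i+B_i$ is non-decreasing along the order — Atobe's construction degenerates into a transparent form: the ``Speh-type'' blocks all collapse, and what survives is exactly a product of Steinberg representations $\Delta_\rho[B_i+j,-A_i+j]$ for $0\le j\le l_i$ stacked on top of a tempered representation. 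The tempered piece must come precisely from the summands $\rho\otimes S_{A_i+B_i+1}$ with $A_i-B_i$ even (the odd ones contribute only to the Langlands quotient data since $l_i$ is then maximal and ``uses up'' the whole segment), with the character $\varepsilon$ read off from the signs $\eta_i$.

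Concretely, the steps I would carry out are: (1) verify $\pi(\EE)\ne 0$ under $(L)$, which should follow from Atobe's nonvanishing criterion — the inequalities and the maximality of $l_i$ guarantee that no obstruction term appears; (2) by induction on $\sum_{i}\#[A_i,B_i]_\rho$ (or on the number of indices), compute the highest derivative $D^{(k)}_{\rho|\cdot|^x}\pi(\EE)$ along the maximal index, showing it matches the asserted $L$-data via Theorem \ref{thm derivative-socle}(4); (3) reconstruct $\pi(\EE)$ from its derivatives using the socle formula $S^{(k)}_{\rho|\cdot|^x}\circ D^{(k)}_{\rho|\cdot|^x}=\mathrm{id}$, which forces the claimed $L(\Delta_{\rho_1}[x_1,y_1],\dots;\pi(\phi,\varepsilon))$; (4) check the tempered parameter and its character separately, using Arthur's parametrization (Theorem \ref{thm Arthur tempered}) and the sign condition \eqref{eq sign condition} to confirm $\varepsilon(\rho\otimes S_{A_i+B_i+1})=\eta_i$. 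Finally, for the last sentence, I would argue that any $\EE'$ with $\psi_{\EE'}=\psi$ and satisfying $(L)$ gives, by the above formula, exactly one representation of the generic/tempered $L$-packet, and conversely every member of $\Pi_{\phi_\psi}$ arises this way; since $\phi_\psi$ is the $L$-parameter of the \emph{generic} (Langlands-quotient-free, i.e.\ tempered-up-to-Langlands) constituents, the correspondence $\varepsilon\leftrightarrow(\eta_i)$ is a bijection onto $\widehat{\mathcal S}_{\phi_\psi}$.

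The main obstacle will be step (2)–(3): controlling exactly how Atobe's recursive operators act when several indices $i$ share the same value of $A_i+B_i$ but possibly different $A_i-B_i$. In that case the order among them is only constrained by admissibility, and one must check that the output $\pi(\EE)$ is genuinely independent of these choices and that the derivatives factor cleanly — this is where the third bullet of $(L)$ (equality of $\eta_i$ among such indices with $A_i-B_i$ even) is essential, and verifying it suffices requires a careful bookkeeping of which segments $[B_i+j,-A_i+j]_\rho$ overlap. A secondary subtlety is the boundary behavior when $B_i+l_i$ reaches or crosses $-A_i+l_i$, i.e.\ when a segment degenerates to a point or to the empty/trivial representation of $\GL_0$; I would handle this with the conventions $\Delta_\rho[x,x+1]=$ trivial of $\GL_0$ recorded in \S\ref{sec nota and prel}. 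Everything else — the nonvanishing and the final bijection statement — should be formal once the inductive computation of the $L$-data is in place.
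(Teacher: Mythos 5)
The paper states this theorem as a citation to \cite[Theorem 9.5]{HLL22} and does not reproduce its proof, so there is no internal argument to compare yours against. As an independent attempt, your proposal is an outline rather than a proof: the steps you yourself identify as ``the main obstacle'' --- controlling Atobe's recursive derivative/socle operations when several rows of $\EE_\rho$ share the same value of $A_i+B_i$, and establishing the claimed $L$-data inductively --- are precisely the content of the theorem and are left unresolved. The assertions that ``the maximality of $l_i$ guarantees that no obstruction term appears'' (for nonvanishing of $\pi(\EE)$) and that ``Atobe's construction degenerates into a transparent form: the Speh-type blocks all collapse'' are exactly what a proof must establish, not facts to be observed along the way. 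Likewise, step (4), the verification that $\varepsilon(\rho\otimes S_{A_i+B_i+1})=\eta_i$, requires an explicit computation inside Atobe's construction together with the sign condition \eqref{eq sign condition}, and this is not sketched. In short: you have correctly located where the work lies, but the work itself is missing.

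There is also a conceptual slip in your final paragraph, where you describe $\phi_\psi$ as the $L$-parameter of the ``generic (Langlands-quotient-free, i.e.\ tempered-up-to-Langlands) constituents.'' The members of $\Pi_{\phi_\psi}$ are the Langlands quotients $L(\ldots;\pi(\phi,\varepsilon))$ inside $\Pi_\psi$ whose standard module has $L$-parameter $\phi_\psi$; when $\psi$ is nontempered these are nontempered, hence not generic --- indeed, by Theorem \ref{proof of enh sha conj}, a generic member of $\Pi_\psi$ forces $\psi$ to be tempered. The correct reading of the ``In particular'' clause is that $\Pi_{\phi_\psi}$ is the $L$-packet sitting inside the Arthur packet $\Pi_\psi$, and under $(L)$ its members are parametrized by the choices of signs $(\eta_i)$ on the rows with $A_i-B_i$ even, subject to the sign condition; conflating this subpacket with the generic members is precisely the kind of error the enhanced Shahidi conjecture is designed to rule out.
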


Now we recall Atobe's parametrization of the local Arthur packets associated with local Arthur parameters of good parity.

\begin{thm}[{\cite[Theorem 3.3]{Ato20b}}]\label{thm Atobe's reformulation}
Suppose $\psi= \bigoplus_{\rho} \bigoplus_{i \in I_{\rho}} \rho \otimes S_{a_i} \otimes S_{b_i}$ is a local Arthur parameter of good parity of $G_n$. Choose an admissible order $>_{\psi}$ on $I_{\rho}$ for each $\rho$ that satisfies ($P'$) if $\half{a_i-b_i}<0$ for some $i \in I_{\rho}$. Then
\[ \bigoplus_{\pi \in \Pi_{\psi}} \pi= \bigoplus_{\EE} \pi(\EE),\]
where $\EE$ runs over all extended multi-segments with $\supp(\EE)= \supp(\psi)$ and $\pi(\EE) \neq 0$. 
\end{thm}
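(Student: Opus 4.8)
The plan is to derive this statement from M{\oe}glin's explicit construction of the packets $\Pi_\psi$ for $\psi$ of good parity (\cite{Moe11}; see also \cite{Xu17b}), by exhibiting extended multi-segments as a faithful repackaging of M{\oe}glin's own combinatorial parameters and then checking that Atobe's representation $\pi(\EE)$ is the one M{\oe}glin attaches to the corresponding datum. The first step is a dictionary. Fixing an admissible order on each $I_\rho$, M{\oe}glin parametrizes the members of $\Pi_\psi$ by assigning to each summand $\rho \otimes S_{a_i} \otimes S_{b_i}$ an integer $l_i$ with $0 \leq l_i \leq b_i/2$ together with a sign $\eta_i \in \{\pm 1\}$ (irrelevant when $l_i = b_i/2$), subject to one global sign relation. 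This is exactly the data carried by an extended multi-segment $\EE = \cup_\rho \{([A_i,B_i]_\rho, l_i, \eta_i)\}_{i \in (I_\rho,>)}$ with $\supp(\EE) = \supp(\psi)$, under $(a_i,b_i) = (A_i+B_i+1,\, A_i-B_i+1)$: the weak equivalence of Definition \ref{def multi-segment} discards precisely the redundant sign at $l_i = b_i/2$, and the sign condition \eqref{eq sign condition} is M{\oe}glin's global relation. So such extended multi-segments, up to weak equivalence, biject with M{\oe}glin's parameter data for $\psi$.

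The second step is to prove $\pi(\EE)$ coincides with M{\oe}glin's representation for the matching datum. Both are computed by the same kind of recursion. When $\psi$ has discrete diagonal restriction the construction linearizes, $\pi(\EE)$ is given by the closed formula of Theorem \ref{thm (L)} (a Langlands quotient built from Speh-type pieces over a tempered representation $\pi(\phi,\varepsilon)$), and one checks this matches M{\oe}glin's closed formula in that case; in general, Atobe's reduction operators on extended multi-segments shrink the parameter while altering the representation by an explicit Jacquet-module / parabolic-induction identity, and I would match these operators, and the base case, step by step with M{\oe}glin's reductions. Since Atobe's operators, like M{\oe}glin's, preserve irreducibility, and the vanishing locus $\pi(\EE) = 0$ should be characterized by the same failure of M{\oe}glin's construction to produce a genuine representation, the two recursions compute the same irreducible object, or both vanish.

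It remains to handle the admissible order. M{\oe}glin's and Xu's results show $\Pi_\psi$ is independent of the chosen admissible order; at the level of individual representations the order matters, but passing to another admissible order that still satisfies $(P')$ when some $B_i = \frac{a_i-b_i}{2} < 0$ merely relabels the data, and Atobe's operators are compatible with this relabeling. The role of $(P')$ is exactly to keep the recursive reductions inside the class of extended multi-segments and to make each reduction step well defined. Combining the three steps with M{\oe}glin's multiplicity-one theorem for $\Pi_\psi$ then gives $\bigoplus_{\pi \in \Pi_\psi} \pi = \bigoplus_\EE \pi(\EE)$, with both sides multiplicity-free.

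I expect the main obstacle to be the second step: M{\oe}glin's recursion is spread over several papers and involves a delicate case analysis when consecutive segments $[A_i,B_i]_\rho$ and $[A_{i+1},B_{i+1}]_\rho$ interact, so faithfully translating it into Atobe's operators --- and in particular pinning down that ``$\pi(\EE) \neq 0$'' reproduces exactly the vanishing conditions implicit in M{\oe}glin's construction, independently of the order in which the reduction steps are performed (which is where the axioms $(P)$ and $(P')$ do the real work) --- is the technical heart of the argument.
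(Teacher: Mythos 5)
The paper does not prove this result; it is quoted verbatim from Atobe \cite[Theorem~3.3]{Ato20b}, so there is no ``paper's own proof'' to compare against. Your high-level strategy — build a dictionary between extended multi-segments (up to weak equivalence) and M{\oe}glin's $(l_i,\eta_i)$-data, then verify that Atobe's recursive construction of $\pi(\EE)$ reproduces M{\oe}glin's representation step for step — is indeed how Atobe establishes the result, and your first step is essentially correct: the extended multi-segment axioms (the bound $0\le l_i\le b_i/2$, the sign relation \eqref{eq sign condition}, the weak equivalence killing $\eta_i$ when $l_i=b_i/2$) are precisely M{\oe}glin's combinatorial parameters repackaged.

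The genuine gap is that your step two is a plan, not a proof, and you say so yourself. What has to be shown is that Atobe's recursive operators on extended multi-segments (the shift $A_i,B_i\mapsto A_i-1,B_i-1$ of a chosen row, applied in an order governed by $(P)$ and $(P')$, together with the reduction to the discrete-diagonal-restriction base case) transform $\pi(\EE)$ by exactly the Jacquet-module identities that M{\oe}glin uses in her recursion, and that the non-vanishing $\pi(\EE)\neq 0$ reproduces M{\oe}glin's non-vanishing criteria. This is where the entire content of the theorem lives, and you defer it with ``I would match these operators \dots\ step by step.'' Two specific things you would need to nail down: (i) the role of $(P')$ is not merely to ``keep the reductions inside the class of extended multi-segments'' — when some $B_i<0$ one cannot even form the initial extended multi-segment without $A_i+B_i\ge 0$, and the shifting procedure interacts delicately with which row is next in line, so $(P')$ is what makes the particular recursion Atobe writes down terminate correctly, which you have asserted but not checked; and (ii) you need to verify injectivity of $\EE\mapsto\pi(\EE)$ on the set $\{\supp(\EE)=\supp(\psi),\ \pi(\EE)\neq 0\}$ — multiplicity-freeness of $\Pi_\psi$ gives you that the $\pi(\EE)$ that appear are distinct members of $\Pi_\psi$, but you still need that every member of $\Pi_\psi$ arises, which requires matching M{\oe}glin's enumeration of nonzero outputs exactly. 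Until step two is carried out against \cite{Moe06a,Moe11,Xu17b}, what you have is a correct outline of the proof rather than a proof.
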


By $\supp(\EE)= \supp(\psi),$ we mean that they are equal as multi-sets and also that the admissible orders on $I_\rho$ agree.

Often, we need to study an extended multi-segment $\EE$ piece by piece. 
Suppose 
$$\EE= \cup_{\rho} \{([A_i,B_i]_{\rho}, l_i, \eta_i)\}_{i \in (I_{\rho},>)}$$
satisfies $\pi(\EE)\neq 0$.  We set  
\[ \EE_{\rho}=\{([A_i,B_i]_{\rho}, l_i, \eta_i)\}_{i \in (I_{\rho},>)},\  \EE^{\rho}=\cup_{\rho' \not\cong \rho}\{([A_i,B_i]_{\rho'}, l_i, \eta_i)\}_{i \in (I_{\rho'},>)}. \]
 For $i \in I_{\rho}$, we usually call the extended segment $ ([A_i,B_i]_{\rho}, l_i, \eta_i)$ the $i$-th row of $ \EE_{\rho}$. Finally, we define $\Rep$ to be the set of extended multi-segments $\EE$ such that $\pi(\EE) \neq 0$.

\subsection{\texorpdfstring{Intersection of local Arthur packets}{}}
In this subsection, we recall the main results in \cite{HLL22} and \cite{Ato23} on the intersection of local Arthur packets. We state them with the notation in \cite{HLL22}.

The main ingredients of these results are operators $R_k$, $ui_{i,j}$, $dual,$ $dual_i^{\pm}$ on extended multi-segments. In this paper, we only need certain consequences of the definition of $ui_{i,j}$ and $dual_i^{\pm}$, which is stated in Lemma \ref{lem operator}. We refer the explicit definitions of $ui_{i,j}, dual_i^{\pm}$ to \cite[Definition 3.23, 5.1, 6.5]{HLL22}. On the other hand, we recall the definition $dual$ and its compatibility with Aubert-Zelevinsky dual now.

Let $\widehat{\pi}$ denote the Aubert-Zelevinsky dual of $\pi$ defined in \cite{Aub95}. The Aubert-Zelevinsky dual is compatible with local Arthur packets in the following sense. Let $\psi=\bigoplus_\rho \bigoplus_{i \in I_\rho} \rho \otimes S_{a_i} \otimes S_{b_i}$ be a local Arthur parameter of good parity and set $\widehat{\psi}:=\bigoplus_\rho \bigoplus_{i \in I_\rho} \rho \otimes S_{b_i} \otimes S_{a_i}.$ Note that $\widehat{\psi}$ is also a local Arthur parameter of good parity. By the results in \cite[\S A]{Xu17b}, we have 
$$
\Pi_{\widehat{\psi}}=\{\widehat{\pi} \, | \, \pi\in\Pi_\psi\}.
$$
Therefore, given $\EE \in \Rep$, it is natural to ask what is the formula for the extended multi-segment $dual(\EE)$ such that $\pi(dual(\EE)) \cong \widehat{\pi(\EE)}$ and $\psi_{\EE'}= \widehat{\psi}_{\EE}$. This is answered in \cite[\S6]{Ato20b}. 
\begin{defn}[{\cite[Definition 6.1]{Ato20b}}]\label{dual segment}
Let $\EE= \cup_\rho \{([A_i,B_i]_{\rho},l_i,\eta_i)\}_{i\in (I_\rho, >)}$ be an extended multi-segment such that the admissible order $>$ on $I_{\rho}$ satisfies (P') for all $\rho$.  Define 
\[dual(\EE)=\cup_{\rho}\{([A_i,-B_i]_{\rho},l_i',\eta_i')\}_{i\in (I_\rho, >')}\] as follows:
\begin{enumerate}
    \item The order $>'$ is defined by $i>'j$ if and only if $j>i.$ 
    \item We set \begin{align*}
l_i'=\begin{cases}
l_i+B_i  & \mathrm{if} \, B_i\in\mathbb{Z},\\
 l_i+B_i+\frac{1}{2}(-1)^{\alpha_{i}}\eta_i  & \mathrm{if} \, B_i\not\in\mathbb{Z},
\end{cases}
\end{align*}
and
\begin{align*}
\eta_i'=\begin{cases}
(-1)^{\alpha_i+\beta_i}\eta_i  & \mathrm{if} \, B_i\in\mathbb{Z},\\
 (-1)^{\alpha_i+\beta_i+1}\eta_i  & \mathrm{if} \, B_i\not\in\mathbb{Z},
\end{cases}
\end{align*}
where $\alpha_{i}=\sum_{j\in I_\rho, j<i}a_j,$ and $\beta_{i}=\sum_{j\in I_\rho, j>i}b_j,$ $a_j=A_j+B_j+1$, $b_j=A_j-B_j+1$.
\item When $B_i\not\in\mathbb{Z}$ and $l_i=\frac{b_i}{2}$, we set $\eta_i=(-1)^{\alpha_i+1}.$
\end{enumerate}
If $\FF= \EE_{\rho}$, define $dual(\FF):= (dual(\EE))_{\rho}$.
\end{defn}

Now we state the main theorems of \cite{HLL22}. 

\begin{thm}[{\cite[Theorem 1.4]{HLL22}}]\label{main HLL24}
Let $\EE_1$ and $\EE_2$ be two extended multi-segments for $G_n$. Suppose that $\pi(\EE_1) \in \Pi_{\psi}$. 
Then the following holds. 
\begin{enumerate}
    \item Let $T$ be any of the four operators $R_k$, $ui_{i,j}$, $dual \circ ui_{i,j} \circ dual$, $dual_k$ or their inverses. We have 
    $T(\EE_1)$ is also an extended multi-segment for $G_n$, and 
    $$\pi(T(\EE_1))=\pi(\EE_1).$$
    \item  We have $\pi(\EE_1) \cong \pi(\EE_2)$ if and only if 
$\EE_2$ can be obtained from $\EE_1$
by a finite chain of the four operators and their inverses in Part (1). 
\item There is a precise formula/algorithm to compute the set $\{ \psi' \ | \ \pi(\EE_1) \in \Pi_{\psi'}\}$.
\end{enumerate}
\end{thm}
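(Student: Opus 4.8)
The plan is to work throughout with extended multi-segments and Atobe's associated representations $\pi(\EE)$. Since every extended multi-segment has $\psi_{\EE}$ of good parity (Definition \ref{def multi-segment}), the theorem is really a statement about the good-parity situation, the general case of representations of Arthur type being handled separately via Theorem \ref{thm red from nu to gp} and Corollary \ref{cor reduction from nu to gp}. For Part (1) it then suffices to treat each operator $R_k$, $ui_{i,j}$, $dual\circ ui_{i,j}\circ dual$, $dual_k$ (and its inverse) separately: first check that $T(\EE_1)$ is again an extended multi-segment lying in $\Rep$, then that $\pi(T(\EE_1))=\pi(\EE_1)$.

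For Part (1), I would verify the invariance operator by operator. The operator $dual$ is handled by the compatibility of local Arthur packets with the Aubert--Zelevinsky involution (the results quoted from \cite[\S A]{Xu17b}) together with Atobe's explicit formula for $dual(\EE)$ in Definition \ref{dual segment}: one checks $\pi(dual(\EE))\cong\widehat{\pi(\EE)}$, and since $dual$ is involutive, invariance of $\pi$ under $dual\circ ui_{i,j}\circ dual$ follows once it is known for $ui_{i,j}$. The operator $R_k$ is a manipulation of adjacent rows within a single $\rho$-block, and is handled by a direct comparison of the defining parabolic inductions, in the spirit of the reordering freedom already built into Theorem \ref{thm Atobe's reformulation}. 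The real work is for $ui_{i,j}$ and $dual_k$: here I would argue by induction on $n$, showing that each such operator commutes --- up to the analogous operator on a strictly smaller extended multi-segment --- with all the derivative and socle functors $D_{\rho|\cdot|^x}$, $S_{\rho|\cdot|^x}$ and their $\Delta_\rho[0,-1]$-, $Z_\rho[0,1]$-variants, using the Leibniz rules of Lemma \ref{lem Leibniz rule} and Theorems \ref{thm derivative-socle} and \ref{thm self-dual derivative}. Since $\pi(\EE)$ is recovered from its highest derivatives in all directions, matching derivatives forces $\pi(T(\EE))=\pi(\EE)$; the induction bottoms out at a short list of base cases (essentially $\pi(\EE)$ supercuspidal or of small rank), where the identity is checked directly against Atobe's algorithm for the $L$-data.

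For Part (2), the ``if'' direction is immediate from Part (1). The ``only if'' direction is the heart of the theorem, and the plan is to move any $\EE$, by the operators, to a canonical representative in its orbit that satisfies condition $(L)$ of Theorem \ref{thm (L)} (or a close variant of it), and then invoke Theorem \ref{thm (L)}, which reads off the Langlands data of $\pi(\EE)$ directly from such an $\EE$. Concretely: (i) using $ui_{i,j}$ and $dual\circ ui_{i,j}\circ dual$, arrange $A_i+B_i$ to be non-decreasing along each $(I_\rho,>)$; (ii) using $dual_k$ and $R_k$, push each $l_i$ to its maximal value $\lfloor\half{A_i-B_i+1}\rfloor$; (iii) show that the residual ambiguity in the $\eta_i$ within a block where $A_i+B_i$ is constant and $A_i-B_i$ is even is exactly the orbit under the remaining operators. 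Once $\EE$ is in this normal form, $\pi(\EE)$ determines $\supp(\EE)$ --- hence all $[A_i,B_i]_\rho$ and $l_i$ --- and determines the signs $\eta_i$ through the tempered constituent $\pi(\phi,\varepsilon)$, so the normal form is a function of the isomorphism class of $\pi(\EE)$ alone. Therefore $\pi(\EE_1)\cong\pi(\EE_2)$ forces $\EE_1$ and $\EE_2$ to have the same normal form, hence to be joined by a finite chain of operators.

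Part (3) follows: by Part (2), $\{\psi' \mid \pi(\EE_1)\in\Pi_{\psi'}\}=\{\psi_{\EE'} \mid \EE' \text{ in the operator-orbit of }\EE_1\}$, and this orbit is finite and effectively enumerable, since the segments $[A_i,B_i]_\rho$ that can appear are constrained by $\supp(\EE_1)$; organizing the parameter-changing $ui_{i,j}$-moves into a closure procedure yields the stated algorithm. I expect the main obstacle to be precisely the ``only if'' half of Part (2) --- the \emph{completeness} of these four families of operators, i.e.\ that they connect \emph{any} two extended multi-segments with isomorphic associated representations. This requires a careful combinatorial reduction to the normal form while never leaving $\Rep$, so the non-vanishing and irreducibility outputs of Atobe's construction must be tracked at every step; it is also where the interaction between $ui_{i,j}$ and $dual_k$ is most delicate.
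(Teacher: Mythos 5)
This statement is quoted verbatim from \cite[Theorem 1.4]{HLL22}; the paper under review cites it as an input and does not prove it, so there is no ``paper's own proof'' here to compare against. What follows is therefore an assessment of your sketch on its own terms, together with an indication of where it departs from what I understand of HLL22's actual strategy.

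Your overall architecture is reasonable: Part~(1) amounts to a case-by-case invariance check; Part~(2) ``if'' is immediate from Part~(1); Part~(2) ``only if'' is the heart of the matter; Part~(3) is a corollary of Part~(2) once the orbit is seen to be finite. For Part~(1), your plan to handle $dual$ via the compatibility of Aubert--Zelevinsky duality with Arthur packets and Atobe's explicit formula (Definition~\ref{dual segment}) is correct, and the observation that $dual\circ ui_{i,j}\circ dual$ then follows from $ui_{i,j}$ is right. The derivative/socle induction you propose for $ui_{i,j}$ and $dual_k^\pm$ is also in the right spirit, though the ``short list of base cases'' is optimistic: the nonvanishing constraint $\pi(\EE)\neq 0$ must be re-verified after each move, and the interaction with $\rho|\cdot|^0$-derivatives (where one must pass to the $\Delta_\rho[0,-1]$- and $Z_\rho[0,1]$-variants of Theorem~\ref{thm self-dual derivative}) creates a genuine case explosion that is not trivially dischargeable.

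The concrete gap is in your Part~(2) normalization scheme. Your step~(ii) proposes to ``push each $l_i$ to its maximal value $\lfloor\half{A_i-B_i+1}\rfloor$ using $dual_k$ and $R_k$''. But $dual_k^\pm$ does not fix the segments $[A_i,B_i]_\rho$; by its very design it changes $\psi_{\EE}$ (and hence $\supp(\EE)$), so ``maximizing $l_i$ at fixed $[A_i,B_i]_\rho$'' is not a move available to you. Likewise, $ui_{i,j}$ performs a union/intersection of two segments, which again alters $\supp(\EE)$. What the normal form actually has to accomplish is subtler: one must land on the \emph{unique} extended multi-segment $\EE'$ satisfying $(L)$ whose associated $\psi_{\EE'}$ has $\phi_{\psi_{\EE'}}=\phi_{\pi(\EE)}$, and to do so one must first establish (a nontrivial claim in itself) that every $\pi$ of Arthur type and good parity lies in the $L$-packet $\Pi_{\phi_{\psi}}$ for some $\psi\in\Psi(\pi)$. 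Your sketch takes both of these for granted. Finally, your remark for Part~(3) that the orbit segments ``are constrained by $\supp(\EE_1)$'' is not literally true --- $ui_{i,j}$ and $dual_k^\pm$ produce new segments not present in $\supp(\EE_1)$ --- though finiteness does hold for the correct reason that the total weighted length is conserved.

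In short: you have identified the right decomposition of the problem and correctly located the crux in Part~(2), but the proposed reduction to condition $(L)$ is not well-formulated as stated, and repairing it (tracking how $\supp$, $\psi_{\EE}$, and the nonvanishing condition evolve under each operator, and establishing the existence of the canonical $\psi$) is essentially the entire content of the ${\sim}100$-page proof in \cite{HLL22}, not a routine cleanup step.
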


We remark that the operator $R_k$ does not change the corresponding local Arthur parameter, i.e. $\psi_{\EE}= \psi_{R_k(\EE)}$. Thus, we often omit $R_k$ in the computation for $\Psi(\pi)$.

\section{The enhanced Shahidi conjecture}\label{section enhanced Shahidi conjecture}
In this section, we prove the enhanced Shahidi conjecture for symplectic and split odd special orthogonal groups $G_n$ (see Theorems \ref{proof of enh sha conj}, \ref{thm non-unitary Enhanced Shahidi}). 

\subsection{\texorpdfstring{A criterion for $|\Psi(\pi)|=1$ for tempered representation}{}}
In this subsection, we give a criterion for $|\Psi(\pi)|=1$ for any tempered representation $\pi=\pi(\phi,\varepsilon)$.

First, we state several immediate consequences of the definition of the operators $ui_{i,j}, dual_k^{+}, dual_k^{-}$ (see \cite[Definitions 3.23, 5.1, 6.5]{HLL22}). Here we recall that two segments $\Delta_1, \Delta_2 $ are not linked if $ \Delta_{1}\cup \Delta_2$ is not a segment or either $\Delta_1 \supseteq \Delta_2$ or $\Delta_1 \subseteq \Delta_2$.

\begin{lemma}\label{lem operator}
Let $\EE= \cup_{\rho} \{ ([A_i,B_i]_{\rho},l_i,\eta_i)  \}_{i \in (I_{\rho},>)} \in \Rep$. Let $i,j \in I_\rho$ and $i+1>i$ be adjacent.
\begin{enumerate}
    \item [(a)]If $[A_i,B_i]_{\rho} $ and $ [A_j,B_j]_{\rho}$ are not linked, then $ui_{i,j}$ is not applicable on $\EE$.
    \item [(b)] Suppose $A_i=B_i$ and $A_{i+1}=B_{i+1}$. Then, the operator $ui_{i,i+1}$ is applicable on $\EE$ if and only if $A_{i+1}=A_i+1$ and $\eta_i=-\eta_{i+1}$.
    \item [(c)] Suppose $i$ is minimal with respect to $>$ and $B_i=1/2$. Then, $dual_i^{+}$ is applicable on $\EE$ if and only if $\eta_i=-1$.
    \item [(d)] The operator $dual_i^{-}$ is applicable on $\EE$ only if $B_i=-1/2$.
\end{enumerate}
\end{lemma}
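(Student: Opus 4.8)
The plan is to unwind each of the four assertions directly from the explicit definitions of the operators $ui_{i,j}$ and $dual_i^{\pm}$ in \cite[Definitions 3.23, 5.1, 6.5]{HLL22}. Since these are combinatorial recipes on extended multi-segments, each statement reduces to reading off the ``applicability'' conditions built into the definition. For part (a), I would recall that the operator $ui_{i,j}$ is defined (roughly) by performing a ``union-intersection'' move on the pair of segments $[A_i,B_i]_\rho$ and $[A_j,B_j]_\rho$; its definition presupposes that $[A_i,B_i]_\rho$ and $[A_j,B_j]_\rho$ are linked, so if they are not linked (i.e.\ one contains the other or their union fails to be a segment) the move simply is not defined. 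This is essentially a matter of quoting the hypothesis in \cite[Definition 3.23]{HLL22}.

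For part (b), I would specialize the general applicability condition for $ui_{i,i+1}$ to the case $A_i=B_i$, $A_{i+1}=B_{i+1}$, i.e.\ both segments are singletons $\{\rho|\cdot|^{B_i}\}$ and $\{\rho|\cdot|^{B_{i+1}}\}$. Combining the linkedness requirement from (a) with the constraint that the admissible order places $i+1>i$ forces $B_{i+1}=B_i+1$ (they must be linked, distinct, and ordered consistently with $(P)$). Then the remaining sign hypothesis in the definition of $ui_{i,i+1}$ — which governs whether the move produces a nonzero representation / is legal — translates in the singleton case precisely to $\eta_i=-\eta_{i+1}$. I would verify both directions: if $A_{i+1}=A_i+1$ and $\eta_i=-\eta_{i+1}$ then all clauses of the definition are met; conversely any failure of one of these two conditions violates a clause.

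Parts (c) and (d) concern $dual_i^{\pm}$, which by \cite[Definitions 5.1, 6.5]{HLL22} are conjugates of the ``add a box / remove a box'' type operators by the $dual$ map (or are defined directly via the partial-dual construction). For (d), the operator $dual_i^{-}$ is, by construction, the one that acts on a row whose segment has $B_i=-1/2$ (it is the inverse-type move to adding a half-integral box at the bottom), so the condition $B_i=-1/2$ is simply part of the definition's domain; I only need to point to that. For (c), assuming $i$ is $>$-minimal and $B_i=1/2$, the operator $dual_i^{+}$ is applicable exactly when the relevant sign clause in its definition holds; unwinding $dual_i^{+}=dual\circ(\text{box-removal})\circ dual$ on the minimal row with $B_i=1/2$, the sign constraint becomes $\eta_i=-1$. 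Here I would use the explicit formulas for $l_i',\eta_i'$ in Definition \ref{dual segment} to track what the $dual$-conjugation does to the sign $\eta_i$ and to the position of the row (minimality is preserved appropriately because $>'$ reverses the order).

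The main obstacle I anticipate is purely bookkeeping: the operators $ui_{i,j}$ and $dual_i^{\pm}$ have several clauses in their definitions (constraints on $l_i$, on the $\eta$'s, on adjacency, and on whether $\pi(\EE)\neq 0$ is preserved), and I must carefully isolate, in each of the four special situations, exactly which clause is the binding one — and check that the others are automatically satisfied given the stated hypotheses (e.g.\ $\EE\in\Rep$, $i+1>i$ adjacent, $i$ minimal). There is no deep idea here; the care required is in faithfully transcribing \cite[Definitions 3.23, 5.1, 6.5]{HLL22} and in handling the order-reversal in Definition \ref{dual segment} for part (c). I would present the proof as four short paragraphs, each citing the relevant definition and specializing it.
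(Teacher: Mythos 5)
Your plan is exactly the paper's argument: the paper's own proof consists of the single remark that all four parts ``follow directly from the definitions of these operators'' in \cite[Definitions 3.23, 5.1, 6.5]{HLL22}, with details omitted, and your clause-by-clause specialization (including using the admissible order to rule out $A_{i+1}=A_i-1$ in part (b)) is the intended bookkeeping. No discrepancy to report.
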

\begin{proof}
    All of these follow directly from the definitions of these operators, which we omit the details.
\end{proof}

Now, we describe when a tempered representation of good parity lies in only one local Arthur packet. Note that these conditions are exactly the opposite of the conditions for $\pi(\phi,\varepsilon)$ being supercuspidal (see \cite[Theorem 2.5.1]{Moe11} or \cite[Theorem 3.3]{Xu17a}).

\begin{thm}\label{thm tempered one A-packet}
Let $\pi=\pi(\phi,\varepsilon)$ be a tempered representation of $G_n$ of good parity. Then $\Psi(\pi):= \{ \psi \ | \  \pi \in \Pi_{\psi}\}$ is a singleton if and only if the following conditions hold.
\begin{enumerate}
    \item [$\oldbullet$] If $ \rho \otimes S_{a} \subset \phi$ and $\rho \otimes S_{a+2} \subset \phi$, then $\varepsilon(\rho \otimes S_{a} )\varepsilon(\rho \otimes S_{a+2} )=1$.
    \item [$\oldbullet$] If $ \rho \otimes S_{2} \subset \phi$, then $\varepsilon(\rho \otimes S_{2} )=1$.
\end{enumerate}
\end{thm}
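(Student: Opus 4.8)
The plan is to characterize $\Psi(\pi)$ for a tempered $\pi = \pi(\phi,\varepsilon)$ of good parity via the machinery of extended multi-segments from \cite{HLL22}. By Theorem \ref{thm (L)}, a tempered representation $\pi(\phi,\varepsilon)$ corresponds to an extended multi-segment $\EE_0$ in which every row has $l_i = 0$ and $A_i = B_i = \frac{a_i-1}{2}$, i.e. each row has the shape $([B_i,B_i]_\rho, 0, \eta_i)$ with $\eta_i = \varepsilon(\rho\otimes S_{a_i})$ (only the even-$A_i-B_i$ rows appear, which here is automatic since $A_i - B_i = 0$). By Theorem \ref{main HLL24}(2), the set of local Arthur parameters $\psi'$ with $\pi \in \Pi_{\psi'}$ is in bijection with the orbit of $\EE_0$ under the operators $R_k$, $ui_{i,j}$, $dual \circ ui_{i,j} \circ dual$, $dual_k^\pm$ and their inverses; and $\Psi(\pi)$ is a singleton precisely when no operator in this list changes the underlying parameter $\psi_{\EE}$ when applied to any extended multi-segment in that orbit. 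Since $R_k$ never changes $\psi_{\EE}$, the question reduces to whether some composition reachable from $\EE_0$ admits a genuine application of $ui_{i,j}$, $dual\circ ui_{i,j}\circ dual$, or $dual_k^{\pm}$ (or an inverse).

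First I would analyze which operators are even applicable starting directly from $\EE_0$. Because every row of $\EE_0$ satisfies $A_i = B_i$ with $l_i = 0$, Lemma \ref{lem operator}(a) kills $ui_{i,j}$ unless the relevant segments are linked, and Lemma \ref{lem operator}(b) says $ui_{i,i+1}$ is applicable exactly when $A_{i+1} = A_i + 1$ and $\eta_i = -\eta_{i+1}$ — that is, when $\rho\otimes S_a \subset \phi$, $\rho\otimes S_{a+2}\subset\phi$, and $\varepsilon(\rho\otimes S_a)\varepsilon(\rho\otimes S_{a+2}) = -1$, which is exactly the negation of the first bulleted condition. Similarly, Lemma \ref{lem operator}(c)–(d): $dual_i^-$ requires $B_i = -1/2$, which cannot happen for a good-parity tempered $\EE_0$ of a symplectic/odd-orthogonal group (all $B_i \geq 0$); and for $dual_i^+$, after conjugating by $dual$ the condition $B_i = 1/2$, $\eta_i = -1$ on $dual(\EE_0)$ translates — via the explicit formulas in Definition \ref{dual segment} — into $\rho\otimes S_2 \subset\phi$ with $\varepsilon(\rho\otimes S_2) = -1$, the negation of the second bullet. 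So the "if" direction (both conditions $\Rightarrow$ singleton) amounts to showing: if both bulleted conditions hold, then no operator is applicable to $\EE_0$ nor to anything in its orbit; and the "only if" direction amounts to exhibiting, when a bullet fails, an explicit operator application that changes $\psi$.

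The main obstacle is the "if" direction, and specifically the stability claim: I must show that if both conditions hold for $(\phi,\varepsilon)$, then not only is no operator applicable to $\EE_0$ itself, but the orbit of $\EE_0$ consists only of extended multi-segments differing by $R_k$'s (hence sharing $\psi_{\EE_0}$), so that one can never "unlock" an operator by a chain of moves. Here I would use the structure of the operators: $R_k$ only permutes/normalizes rows within a fixed $\rho$-block without altering the multiset $\{(A_i,B_i)\}$ or the $\eta_i$'s in an essential way, so any $\EE$ in the $R$-orbit of $\EE_0$ still has all rows of the form $([B,B]_\rho,0,\eta)$; then re-applying Lemma \ref{lem operator}(b)–(d) shows $ui_{i,j}$, $dual\circ ui_{i,j}\circ dual$, and $dual_k^{\pm}$ remain inapplicable under the two conditions, and their inverses are inapplicable because an inverse operator, when applied, would have to produce a tempered-shaped $\EE$ violating a bullet. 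This is essentially the observation — noted in the statement — that the two conditions are the exact complement of the supercuspidality criterion of \cite[Theorem 2.5.1]{Moe11}, \cite[Theorem 3.3]{Xu17a}: supercuspidal tempered representations are the "most rigid" and the conditions here say $\pi(\phi,\varepsilon)$ is rigid enough that its unique parameter cannot be deformed. I would close by invoking Theorem \ref{main HLL24}(2) to conclude $|\Psi(\pi)| = 1$, and conversely, when a bullet fails, the single explicit operator application already produced gives a second parameter, so $|\Psi(\pi)| \geq 2$.
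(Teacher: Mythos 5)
Your proposal follows essentially the same route as the paper's proof: realize $\pi$ as the tempered extended multi-segment $\EE_0=\cup_{\rho}\{([z_i,z_i]_{\rho},0,\varepsilon(\rho\otimes S_{2z_i+1}))\}$, read off the applicability of $ui_{i,j}$ and $dual_i^{\pm}$ from Lemma \ref{lem operator} (whose conditions are exactly the negations of the two bullets), and conclude via Theorem \ref{main HLL24}. Two cosmetic points where the paper is cleaner: $dual_i^{+}$ is tested directly on $\EE_0$ via Lemma \ref{lem operator}(c) (no conjugation by $dual$ is involved), the case $dual\circ ui_{j,i}\circ dual$ is disposed of by noting that the segments $[z_i,-z_i]_{\rho}$, $[z_j,-z_j]_{\rho}$ of $dual(\EE_0)$ are never linked so Lemma \ref{lem operator}(a) applies, and no separate orbit-stability analysis is needed since inapplicability of the four operators on $\EE_0$ itself (with $R_k$ ignored, as it preserves $\psi$) already yields the singleton by Theorem \ref{main HLL24}(2).
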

\begin{proof}
Write
\[ \phi= \bigoplus_{\rho} \bigoplus_{i \in I_{\rho}} \rho \otimes S_{2 z_i +1} , \]
 and let $\EE= \cup_{\rho}\{ ([z_i,z_i]_{\rho},0, \varepsilon(\rho \otimes S_{2z_i+1}))\}_{i \in (I_{\rho,>})}$ so that $\pi(\EE)= \pi$ (see Theorem \ref{thm (L)}). Here the total order $>$ is non-decreasing with respect to $z_i$.
 
 First, we claim that if any of the conditions fails, then there is an operator $T$ applicable on $\EE$, and hence $\Psi(\pi) \supseteq \{\psi_{\EE} \neq \psi_{T(\EE)}\}$ is not a singleton by Theorem \ref{main HLL24}(1). Indeed, if the first condition fails, say $z_{i+1}=z_i+1$ and $\varepsilon(\rho\otimes S_{2z_i+1})=-\varepsilon(\rho\otimes S_{2z_{i}+3})$, then $T=ui_{i,i+1}$ is applicable on $\EE$ by Lemma \ref{lem operator}(b). If the second condition fails, let $i$ be the minimal element in $(I_{\rho},>)$. Then we must have $A_i=B_i=1/2$ and $\eta_i=-1$. Thus, $T=dual_i^{+}$ is applicable on $\EE$ by Lemma \ref{lem operator}(c). This completes the verification of the claim.

Next, we show that if the conditions hold, then there is no operator applicable on $\EE$, and hence $\Psi(\pi)$ is a singleton by Theorem \ref{main HLL24}(2). We verify that $ui_{i,j}, dual_i^{+}, dual \circ ui_{j,i} \circ dual, dual_i^{-}$ are not applicable case by case for any $i<j \in I_{\rho}$.

Suppose the contrary that $ui_{i,j}$ is applicable on $\EE$. Lemma \ref{lem operator}(a) implies that $[z_i,z_i]_{\rho}$ and $[z_{j},z_j]_{\rho}$ are linked, which holds only if $z_j=z_i+1$. However, the first condition in the statement contradicts to the applicability of $ui_{i,j}$ by Lemma \ref{lem operator}(b). Similarly, suppose the contrary that $dual_i^{+}$ is applicable on $\EE$. Then $z_i=1/2$ and hence we can assume $i$ is minimal in $(I_{\rho},>)$. Again, the second condition in the statement contradicts to the applicability of $dual_i^{+}$ by Lemma \ref{lem operator}(c).

To show $dual \circ ui_{j,i} \circ dual$ is never applicable on $\EE$, observe that the corresponding segments in $dual(\EE)$ are $[z_i,-z_i]$ and $[z_j,-z_j]$, which are never linked. Thus, $ui_{j,i}$ is never applicable on $dual(\EE)$ by Lemma \ref{lem operator}(a), and hence $dual \circ ui_{j,i} \circ dual$ is never applicable on $\EE$. Finally, none of the $z_i$ is equal to $-1/2$, and hence $dual_i^{-}$ is never applicable on $\EE$ by Lemma \ref{lem operator}(d). This completes the proof of the theorem.
 \end{proof}

\subsection{The enhanced Shahidi conjecture} In this subsection, we give a complete proof of the enhanced Shahidi conjecture for $G_n$.
Note that we have implicitly fixed a choice of Whittaker datum. That is, we fixed a $G_n$-conjugacy class of a tuple $(B,\chi)$, where $B$ is an $F$-rational Borel subgroup of $\mathrm{G}_n$ and $\chi$ is a generic character of the $F$-points of the unipotent radical of $B$. For a tempered local Arthur parameter $\psi$, this choice fixes the bijection between $\Pi_\psi$ and $\widehat{\mathcal{S}}_\psi$ in Theorem \ref{thm Arthur tempered}

In \cite[Conjecture 9.4]{Sha90}, Shahidi conjectured that for any Whittaker data $(B',\chi'),$ a tempered $L$-packet contains a $(B',\chi')$-generic representation. For Archimedean fields, Shelstad established a stronger form of this conjecture (\cite{She08}). Namely, for any Whittaker data $(B,\chi),$ a tempered $L$-packet contains a unique $(B,\chi)$-generic representation. Moreover, if the bijection between $\Pi_\psi$ and $\widehat{\mathcal{S}}_\psi$ is fixed by $(B,\chi),$ then this representation corresponds to the trivial character in $\widehat{\mathcal{S}}_\psi$. The non-Archimedean analogue of Shelstad's result is also true.
The Shahidi conjecture is known for symplectic and split odd orthogonal groups $G_n$ as follows (see {\cite{Kon02, JS03, Liu11, JS12, MW12, Wal12, Art13, Var17}}). 

\begin{thm}\label{thm Arthur generic}
Suppose $\phi$ is a tempered L-parameter of $G_n$, then $\pi(\phi,\varepsilon)$ is generic with respect to the Whittaker datum $(B,\chi)$ if and only if the character $\varepsilon$ is trivial.
\end{thm}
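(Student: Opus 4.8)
The plan is to recognize this as the non-Archimedean counterpart of Shelstad's refinement \cite{She08} of Shahidi's conjecture \cite[Conjecture 9.4]{Sha90}, and to deduce it from the endoscopic classification together with the local results available for $G_n$. The statement splits into two halves: \emph{existence} of a $(B,\chi)$-generic member in each tempered $L$-packet $\Pi_{\phi}$, and the \emph{identification} of that (necessarily unique) generic member with the trivial character $\varepsilon=1$ under the Whittaker-normalized bijection $\Pi_\phi \to \widehat{\mathcal{S}}_\phi$ of Theorem \ref{thm Arthur tempered}. I would first reduce to the good parity case: writing $\phi=\phi_{np}\oplus\phi_{gp}\oplus\phi_{np}^{\vee}$ as in \eqref{eq decomp of psi_u}, Theorem \ref{thm reduction to gp} gives $\Pi_\phi=\{\tau_{\phi_{np}}\rtimes\pi_{gp}\mid\pi_{gp}\in\Pi_{\phi_{gp}}\}$ together with a compatible isomorphism $\widehat{\mathcal{S}}_\phi\cong\widehat{\mathcal{S}}_{\phi_{gp}}$. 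Since $\phi$ is tempered, every summand of $\phi_{np}$ has $b_i=1$, so $\tau_{\phi_{np}}$ is a product of (generalized Steinberg) discrete series representations of general linear groups, hence generic; as normalized parabolic induction of a generic representation is generic and, here, irreducible, $\tau_{\phi_{np}}\rtimes\pi_{gp}$ is generic if and only if $\pi_{gp}$ is. Thus it suffices to treat $\phi=\phi_{gp}$ of good parity.

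For existence, I would invoke the Langlands--Shahidi machinery together with the standard module conjecture and the local descent constructions: for $G_n$, Shahidi's conjecture follows from \cite{Kon02} combined with the endoscopic classification \cite{Art13}, and independently from the descent and local converse theorem results of \cite{JS03, JS12, Liu11}, each exhibiting a $\phi$-generic representation in $\Pi_\phi$. For the identification, the point is that the bijection of Theorem \ref{thm Arthur tempered} is the one normalized by the fixed Whittaker datum $(B,\chi)$, i.e.\ the endoscopic character identities use the $(B,\chi)$-normalized transfer factors of \cite{Wal12, MW12}; with this normalization the base point $\varepsilon=1$ of $\Pi_\phi$ is the $(B,\chi)$-generic member. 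Concretely I would follow Konno \cite{Kon02}: Rodier's theorem on the number of Whittaker models of a representation induced from a generic discrete series, combined with the structure of the relevant $R$-group and the Whittaker normalization of the character identities, singles out a unique generic member of $\Pi_\phi$ and shows it carries the trivial character; alternatively the statement can be transported through the local descent of \cite{JS12} and the Whittaker-functional analysis of \cite{Var17}.

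The main obstacle is the identification, not existence. Shahidi's original conjecture only asserts that \emph{some} generic member exists for a given Whittaker datum, whereas the theorem demands that this member be unique and be precisely the $\varepsilon=1$ member of Arthur's packet; this is exactly the compatibility of the Whittaker normalization of transfer factors with the Langlands--Shahidi normalization, and it rests on the full stabilization of the (twisted) trace formula \cite{MW12, Wal12} underlying \cite{Art13}, together with the explicit descent and Whittaker-functional identifications for $G_n$. I emphasize that there is no internal shortcut within this paper: Atobe's parametrization only locates $\pi(\phi,\varepsilon)$ among the local Arthur packets, and since ``$\pi(\phi,\varepsilon)$ is generic'' is literally the assertion ``$\varepsilon$ is trivial,'' the machinery of \S\ref{sec atob refor} cannot be used to derive it. Accordingly the theorem is quoted from \cite{Kon02, JS03, Liu11, JS12, MW12, Wal12, Art13, Var17}.
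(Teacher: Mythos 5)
Your proposal correctly identifies that this theorem is not proven in the paper but is quoted from the literature \cite{Kon02, JS03, Liu11, JS12, MW12, Wal12, Art13, Var17}, which is exactly what the paper does; the preceding discussion of Shelstad's refinement, the reduction to good parity, and the Whittaker-normalization of transfer factors all align with the paper's framing in \S\ref{section enhanced Shahidi conjecture}. Your observation that the machinery of \S\ref{sec atob refor} cannot yield this statement internally is also accurate.
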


The enhanced Shahidi conjecture has global applications. If an automorphic representation $\pi$ lies in a global Arthur packet and $\pi_{v}$ is generic for some finite place $v$, then we can obtain information about the global Arthur parameter. More details will be given in \S \ref{sec global} (see 
Propositions \ref{thm partial Clozel 4} and \ref{thm stronger condition}). We remark that certain cases of the global application are already elaborated in the literature. 
To be explicit, Magaard and Savin (\cite[Proposition 8.2]{MS20}, $\mathrm{G}=\Sp_{2n}$) and Chen (\cite[Lemma 4.5]{Che23}, $\mathrm{G}=\Sp_{2n}$, $\SO_{2n+1}$) proved that if a local component of a cuspidal automorphic representation is 
a Steinberg representation, then its functorial lifting to $\GL$ is a cuspidal representation. Indeed, the key point is that Steinberg representations only live in one local Arthur packet, whose parameter is tempered and irreducible. See \cite[Proposition 8.2]{MS20} and Remark \ref{rmk Steinberg} for more details.

Conjecture \ref{Enhanced Shahidi conjecture intro} requires that we prove it for any representation that is generic with respect to some Whittaker datum; however, it is enough to prove it for a fixed Whittaker datum. Indeed, for a tempered local Arthur parameter $\psi,$ the bijection between $\Pi_\psi$ and $\widehat{\mathcal{S}}_\psi$ in Theorem \ref{thm Arthur tempered}, relies on the choice of Whittaker datum. If we let $(B',\chi')$ be another choice of Whittaker datum, then the bijection between the tempered local Arthur packet $\Pi_\psi$ and $\widehat{\mathcal{S}}_\psi$  may change; however, Theorem \ref{thm Arthur generic} shows that $\Pi_\psi$ still has a generic member with respect to the Whittaker datum $(B',\chi').$

Here is the strategy of the proof of the enhanced Shahidi conjecture for $G_n.$ First, irreducible generic representations have been classified for $G_n$ (Theorem \ref{thm generic sequence}). If $\pi$ is of good parity and generic, this allows us to show that $\pi$ must be tempered (Theorem \ref{proof of enh sha conj}(i)). By Theorem \ref{thm Arthur generic}, we obtain an extended multi-segment $\EE$ for which $\pi(\EE)=\pi$ and $\psi_\EE$ is tempered. 
We check that $\EE$ satisfies the conditions of Theorem \ref{thm tempered one A-packet} and hence $\pi$ only lies in the tempered local Arthur packet $\Pi_{\psi_\EE}$. 

We proceed with the details now. The following is the classification of generic representations of $G_n$ in terms of their $L$-data.

\begin{thm}[{\cite[\S 4]{Mui98}, \cite[Definition 5.1]{JS04}, \cite[Definition 4.14]{Liu11}}]\label{thm generic sequence}
Any irreducible generic representation $\sigma$ of $\SO_{2n+1}(F)$ (resp. $\Sp_{2n}(F)$) is an irreducible parabolic induction of the form (this gives the $L$-data of $\sigma$)
\[ \sigma = \Delta_{\rho_1}[x_1,-y_1]\times\cdots\times \Delta_{\rho_f}[x_f,-y_f] \rtimes \pi(\phi,1),\]
where the sequence of segments $\{ \Sigma_i:=[y_i,-x_i]_{\rho_i} \}_{i=1}^f$ is $\SO_{2n+1}$-generic (resp. $\Sp_{2n}$-generic) with respect to the tempered generic representation $\pi(\phi,1)$.

\end{thm}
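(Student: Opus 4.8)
The plan is to deduce this from two standard ingredients: heredity of Whittaker functionals under parabolic induction, and an irreducibility criterion for standard modules with generic tempered part; the ``$G_n$-generic'' condition on the sequence of segments then emerges as the exact combinatorial translation of that irreducibility criterion. I would present it as three steps.

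\emph{Step 1: locate the tempered part.} Take an arbitrary irreducible generic $\sigma$ of $G_n$ and write its Langlands data $\sigma = L(\Delta_{\rho_1}[x_1,-y_1],\ldots,\Delta_{\rho_f}[x_f,-y_f];\pi_{temp})$, so $\sigma$ is the unique irreducible quotient of the standard module $\Lambda := \Delta_{\rho_1}[x_1,-y_1]\times\cdots\times\Delta_{\rho_f}[x_f,-y_f]\rtimes\pi_{temp}$. Each $\Delta_{\rho_i}[x_i,-y_i]$ is essentially square-integrable, hence generic, on a general linear group, so by Rodier's theorem on preservation of Whittaker models under parabolic induction the space of Whittaker functionals of $\Lambda$ (for the fixed Whittaker datum) has the same dimension as that of $\pi_{temp}$. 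Since $\sigma$ is a quotient of $\Lambda$ and is generic, $\pi_{temp}$ must be generic; by the classification of generic tempered representations of $G_n$ recalled in Theorem \ref{thm Arthur generic}, $\pi_{temp}=\pi(\phi,1)$ for a tempered $L$-parameter $\phi$.

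\emph{Step 2: the standard module is irreducible.} Here I would invoke the irreducibility result for standard modules with generic Langlands quotient (Casselman--Shahidi, later systematized as the standard module conjecture and available for $G_n$): since the Langlands quotient $\sigma$ of $\Lambda$ is generic, $\Lambda$ is irreducible, hence $\Lambda=\sigma$. This gives the asserted induced form. Conversely, by Rodier's theorem again, whenever such a $\Lambda$ is irreducible it is generic. Thus the theorem is equivalent to the assertion that $\Lambda$ is irreducible precisely when the segments $\{\Sigma_i=[y_i,-x_i]_{\rho_i}\}_{i=1}^f$ form a $G_n$-generic sequence relative to $\pi(\phi,1)$.

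\emph{Step 3: the combinatorial criterion.} It remains to analyze irreducibility of $\Lambda$. This splits into two layers: (i) among the general linear factors, by Zelevinsky's theory the product $\Delta_{\rho_1}[x_1,-y_1]\times\cdots\times\Delta_{\rho_f}[x_f,-y_f]$, together with the contragredient copies forced by $\rtimes$, is irreducible iff the underlying segments (and their duals) are pairwise non-linked; (ii) the interaction of each $\Delta_{\rho_i}[x_i,-y_i]$ with the tempered part, governed by Shahidi's computation of Plancherel measures, i.e. by the reducibility points of $\Delta_{\rho}[x,-y]\rtimes\pi(\phi,1)$, which are read off from poles of the relevant Rankin--Selberg and symmetric/exterior-square $L$-functions, equivalently from $\Jord(\phi)$. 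Combining (i) and (ii), and carefully tracking that the exponents put the $\Sigma_i$ in the ``positive'' range so that no cancellation occurs in the relevant Jacquet modules, one checks that $\Lambda$ is irreducible exactly under the defining conditions of a $G_n$-generic sequence; this is the content of \cite[\S4]{Mui98}, \cite[\S5]{JS04}, \cite[\S4]{Liu11}. \textbf{Main obstacle.} I expect the bookkeeping of layer (ii) to be the real difficulty: pinning down the precise conditions relating each segment to $\phi$ — including the edge cases of segments centered at $0$ or at $\pm 1/2$ and half-integral versus integral exponents — and then showing that these Langlands--Shahidi reducibility constraints interleave cleanly with the Zelevinsky linking constraints of layer (i) to produce a single uniform combinatorial condition, rather than a case-ridden list.
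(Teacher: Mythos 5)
This is a citation theorem: the paper states it without proof, attributing it to \cite[\S 4]{Mui98}, \cite[Definition 5.1]{JS04}, and \cite[Definition 4.14]{Liu11}, and the only feature of the definition of a $G_n$-generic sequence that the paper actually uses is the special consequence recorded immediately after the theorem (no segment $[1/2,1/2]_{\rho}$ of the wrong parity type). So there is no proof in the paper to compare against; the relevant comparison is to the cited references.

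Your three-step strategy is, in outline, exactly how Mui\'c and Liu proceed: heredity of Whittaker functionals (Rodier) reduces genericity of the standard module to genericity of its tempered inducing datum; the standard module conjecture for $G_n$ (Mui\'c, Casselman--Shahidi) upgrades ``generic Langlands quotient'' to ``irreducible standard module''; and the notion of a $G_n$-generic sequence is then \emph{defined} so as to record the Zelevinsky non-linking constraints among the segments (including their duals) together with the Langlands--Shahidi reducibility points against $\pi(\phi,1)$ read off from $\mathrm{Jord}(\phi)$. Two small points worth flagging: in Step 1 the statement ``$\dim\mathrm{Wh}(\Lambda)=\dim\mathrm{Wh}(\pi_{temp})$'' should be read with the usual caveat that Rodier's heredity matches Whittaker functionals for compatible Whittaker data on $G_n$ and on the Levi, which works here because the $\mathrm{GL}$-factors $\Delta_{\rho_i}[x_i,-y_i]$ are essentially square-integrable and hence have a one-dimensional Whittaker space; and in Step 3 one must keep track that the sequence entering the definition is $\{[y_i,-x_i]_{\rho_i}\}$ (the ``positive'' segments, essentially the contragredients of the inducing data), a sign/orientation bookkeeping that the cited definitions handle. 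You flag the hard part of Step 3 honestly; that bookkeeping is precisely the technical content of the cited works, and your sketch is an accurate reconstruction of their approach rather than an alternative to anything proved in this paper.
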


We do not recall the explicit description of $\SO_{2n+1}$-generic (resp. $\Sp_{2n}$-generic) here. We shall only use the following special case of the definition. Suppose $\{\Sigma_i\}_{i=1}^f$ is $\SO_{2n+1}$-generic (resp. $\Sp_{2n}$-generic). Then, if $\rho_i$ is self-dual of orthogonal type (resp. symplectic type), then $\Sigma_i \neq [1/2,1/2]_{\rho_i}$. In particular, if $\sigma$ is generic, of good parity, and lies in the local Arthur packet $\Pi_{\psi}$, then there is no segment of the form $\Delta_{\rho}[-1/2,-1/2]$ in the $L$-data of $\sigma$.

Here are two observations.
\begin{lemma} \label{lem gen observation} \
\begin{enumerate}
    \item [(a)] Suppose $\phi= \sum_{i=1}^n \rho_i \otimes S_{2z_i+1}$ is a tempered L-parameter of $G_n$ of good parity. For any $z > 1/2$, the order of the highest derivative of $\pi(\phi,1)$ with respect to $D_{\rho|\cdot|^{z}}$ is given by
    \[ k = \#\{ i \ | \ \rho_i \cong \rho , z_i=z \} \]
    and $D_{\rho|\cdot|^{z}}^{(k)}(\pi(\phi,1))= \pi(\phi',1)$, where 
    \[ \phi'= \phi-  (\rho\otimes S_{2z+1})^{\oplus k} + (\rho\otimes S_{2(z-1)+1})^{\oplus k}.\]
    \item [(b)] Suppose $\EE\in\Rep$ and $\EE_{\rho}$ is of the form
    \[\EE_\rho=\{ ([A_k,B_k]_{\rho},l_k,\eta_k)\}_{k=1 }^{n-r}+ \{([y,y]_{\rho},0,\eta_n)^{r}\},\]
    where $A_k <y $ for all $1 \leq k \leq n-r$. If 
    \[ \pi(\EE)= L(\Delta_{\rho_1}[x_1,-y_1], \dots, \Delta_{\rho_f}[x_f,-y_f];\pi( \phi,\varepsilon) ),\]
    then  $x_i, y_i <y$ for $\rho_i \cong \rho$, and $\phi$ contains $r$ copies of $\rho\otimes S_{2y+1}$.
\end{enumerate}
\end{lemma}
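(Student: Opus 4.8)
\textbf{Proof plan for Lemma \ref{lem gen observation}.}

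For part (a), the plan is to use the explicit description of tempered local Arthur packets together with the derivative formulas. Since $\pi(\phi,1)$ corresponds to the trivial character, it can be written as $\pi(\EE)$ for the extended multi-segment $\EE = \cup_\rho\{([z_i,z_i]_\rho, 0, 1)\}$, and Theorem \ref{thm (L)} (or rather its tempered specialization) together with Theorem \ref{thm derivative-socle}(4) gives the $L$-data of the highest $\rho|\cdot|^z$-derivative. Concretely, I would argue that since $z > 1/2$ the relevant Jacquet module computation only sees the rows with $[A_i,B_i]_\rho = [z,z]_\rho$; each such row contributes one copy of $\rho|\cdot|^z$ that can be removed, lowering the segment $[z,z]_\rho$ to $[z-1,z-1]_\rho$ (equivalently, replacing $\rho\otimes S_{2z+1}$ by $\rho\otimes S_{2z-1}$ in the parameter). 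The bound $z > 1/2$ guarantees $z - 1 \geq -1/2 > -z$, so no row disappears and the resulting $\EE'$ still corresponds to the trivial character, whence $D_{\rho|\cdot|^z}^{(k)}(\pi(\phi,1)) = \pi(\phi',1)$ with $k$ as stated. The one point requiring care is that removing $\rho|\cdot|^z$ once cannot reach further rows (those with $A_i < z$ or $A_i > z$ do not have $\rho|\cdot|^z$ at the appropriate corner of their Jacquet module), so that the count of removable copies is exactly the number of rows equal to $[z,z]_\rho$; this follows from Lemma \ref{lem Leibniz rule}(2) applied to the Steinberg/Speh building blocks.

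For part (b), the strategy is to reduce to part (a) via the interplay between derivatives, socles, and the operators of Theorem \ref{main HLL24}. I would first note that since $A_k < y$ for all the non-trivial rows, repeatedly taking the highest $\rho|\cdot|^y$-derivative of $\pi(\EE)$ strips off exactly the $r$ trivial rows $([y,y]_\rho,0,\eta_n)$: each application removes one such row (again using $y > A_k \geq$ everything else, so $\rho|\cdot|^y$ appears in the Jacquet module only through those rows and the Langlands quotient segments $\Delta_{\rho_i}[x_i,-y_i]$ with $x_i = y$). The key claim to establish is that in the $L$-data of $\pi(\EE)$ one has $x_i, y_i < y$ for all $i$ with $\rho_i \cong \rho$: this is because the segments appearing in $\pi(\EE)$, as described by the algorithm in \cite{AM20}, are built from $[B_i + t, -A_i + t]_\rho$ for $0 \leq t \leq l_i$, and for the non-trivial rows $B_i + l_i \leq A_i < y$ while $-A_i + l_i \leq -A_i + (A_i - B_i) = -B_i < y$; the trivial rows have $l = 0$ and contribute nothing to the Langlands data (they feed into the tempered part). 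Then, having shown $\pi(\EE)$ is $\rho|\cdot|^y$-reduced after removing the Langlands-quotient contributions — more precisely, that the highest $\rho|\cdot|^y$-derivative of $\pi(\EE)$ equals $\pi(\EE^{(r)})$ where $\EE^{(r)}$ deletes the $r$ trivial rows — I would invoke Theorem \ref{thm derivative-socle}(4) once more to read off that the tempered parameter $\phi$ of $\pi(\EE)$ must contain exactly $r$ copies of $\rho\otimes S_{2y+1}$, since these are precisely what disappears from the tempered support under the derivative.

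The main obstacle I anticipate is bookkeeping in part (b): I must be careful that no row with $A_k < y$ secretly produces a copy of $\rho|\cdot|^y$ in an intermediate Jacquet module (it cannot, because the corner values are strictly bounded by $y$), and that the trivial rows $([y,y]_\rho,0,\eta_n)$ genuinely contribute to the tempered part $\pi(\phi,\varepsilon)$ rather than to the Langlands quotient — this is exactly the content of $l = 0$ combined with $A = B = y$ in Atobe's construction, so that $\rho\otimes S_{2y+1}$ lands in $\phi$. Once these two structural facts are pinned down, both parts follow from routine application of the Leibniz rule (Lemma \ref{lem Leibniz rule}) and the derivative-socle theory (Theorems \ref{thm derivative-socle}, \ref{thm (L)}); I do not expect either part to require any new input beyond what is already recalled in \S\ref{sec nota and prel} and \S\ref{sec atob refor}.
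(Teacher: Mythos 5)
Part (a) of your plan is fine and is essentially what the paper does: it is a direct application of the explicit highest-derivative formula of Atobe--M{\'i}nguez (\cite[Theorem 7.1]{AM20}, i.e.\ Theorem \ref{thm derivative-socle}(4)); since $\varepsilon=1$, no sign obstruction from a possible $\rho\otimes S_{2z-1}$ summand arises, and the order is exactly the multiplicity of $\rho\otimes S_{2z+1}$.

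Part (b), however, has genuine gaps. First, your description of the Langlands segments of $\pi(\EE)$ as $[B_i+t,-A_i+t]_\rho$, $0\leq t\leq l_i$, is the formula of Theorem \ref{thm (L)}, which is only valid when $\EE$ satisfies condition $(L)$; the extended multi-segment in (b) has arbitrary $l_k,\eta_k$ and admissible order on the rows with $A_k<y$, so you cannot read off the $L$-data this way, and in particular the claim $x_i,y_i<y$ is not justified by this argument. (Note that the claim is genuinely finer than a cuspidal-support bound: the support only gives $x_i,y_i\leq y$, and the whole point of its use in Theorem \ref{proof of enh sha conj} is to rule out $y_i=y$.) Second, the assertion that the highest $\rho|\cdot|^{y}$-derivative of $\pi(\EE)$ equals $\pi(\EE^{(r)})$ with the $r$ rows $([y,y]_\rho,0,\eta_n)$ \emph{deleted} is false: the derivative shifts these rows to $([y-1,y-1]_\rho,0,\eta_n)$, as a dimension count already forces (deleting a row drops the dual-group dimension by $d(2y+1)$, while $D_{\rho|\cdot|^y}^{(r)}$ only drops it by $2rd$). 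Third, the final step, reading off that $\phi$ contains $r$ copies of $\rho\otimes S_{2y+1}$ ``because these are what disappears under the derivative,'' is circular (it presupposes the $L$-data you are trying to determine) and, for the arbitrary character $\varepsilon$ occurring here, the order of the highest $\rho|\cdot|^{y}$-derivative of $\pi(\phi,\varepsilon)$ is not simply the multiplicity of $\rho\otimes S_{2y+1}$: sign interactions with $\rho\otimes S_{2y-1}$ can lower it. The paper's route avoids all of this: one checks the two assertions of (b) directly from Atobe's definition of $\pi(\EE)$ when $y$ is sufficiently large (the rows $[y,y]_\rho$ are then far from everything else, so they visibly feed $r$ copies of $\rho\otimes S_{2y+1}$ into the tempered part and contribute nothing to the Langlands segments), and then descends to the actual $y$ by successively taking highest derivatives in the sense of \cite[Lemma 4.3(iv)]{HLL22}: at each stage the tempered part changes by replacing the $r$ copies of $\rho\otimes S_{2y'+1}$ with $\rho\otimes S_{2y'-1}$, while the Langlands segments, whose $\rho$-exponents stay below $y$, are untouched. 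If you want to salvage your outline, you would need to replace your deletion step by this shifting/descent mechanism and justify the $y\gg 0$ case from the construction rather than from Theorem \ref{thm (L)}.
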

\begin{proof}
Part (a) follows directly from \cite[Theorem 7.1]{AM20}. For Part (b), it is not hard to see from the definition of $\pi(\EE)$ in \cite[\S 3/2]{Ato20b} that the conclusion holds when $y$ is sufficiently large. The general case follows from taking derivatives (in the sense of \cite[Lemma 4.3(iv)]{HLL22}) which changes $\phi$ to $\phi- (\rho\otimes S_{2y+1})^{\oplus r}+(\rho\otimes S_{2(y-1)+1})^{\oplus r}$ at each stage. This completes the proof of the lemma.  
\end{proof}

Now we are ready to prove Conjecture \ref{Enhanced Shahidi conjecture intro}(1). 

\begin{thm}\label{proof of enh sha conj} 
Let $G_n$ be a symplectic or split odd special orthogonal group.
\begin{enumerate}
    \item [(i)] Suppose $\sigma \in \Pi_{\psi}$ for some $\psi \in \Psi(G_n)$. Then $\sigma$ is generic only if $\sigma$ is tempered. Hence, combining with Theorem \ref{thm Arthur generic}, Conjecture \ref{Enhanced Shahidi conjecture intro}(1) holds for $G_n$. 
    \item [(ii)] Any tempered generic representation lives in exactly one local Arthur packet.
\end{enumerate}
\end{thm}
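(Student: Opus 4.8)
The plan is to treat the two parts separately, using the classification of generic representations (Theorem \ref{thm generic sequence}) together with the derivative calculus of \S\ref{sec: der and socle} for part (i), and the criterion of Theorem \ref{thm tempered one A-packet} for part (ii).

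For part (i), suppose $\sigma \in \Pi_\psi$ with $\psi \in \Psi(G_n)$ is generic but not tempered. First I would reduce to the good parity case: by Theorem \ref{thm red from nu to gp} and Corollary \ref{cor reduction from nu to gp}, $\sigma = \tau_{\psi_{np}} \rtimes \sigma_{gp}$ with $\sigma_{gp} \in \Pi_{\psi_{gp}}$, and since parabolic induction preserves genericity in the appropriate sense (and $\psi \in \Psi(G_n)$ forces $\psi_{nu,>0} = 0$), one checks $\sigma_{gp}$ is again generic of good parity; if $\sigma$ is non-tempered then so is $\sigma_{gp}$ (the factor $\tau_{\psi_{np}}$ is a product of Speh representations at real-part zero, not contributing to the Langlands data's strict negativity). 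So assume $\sigma$ is of good parity, generic, non-tempered, hence its $L$-data $\sigma = \Delta_{\rho_1}[x_1,-y_1]\times\cdots\times\Delta_{\rho_f}[x_f,-y_f]\rtimes\pi(\phi,1)$ has $f \geq 1$. By Theorem \ref{thm generic sequence}, the leading segment $\Delta_{\rho_1}[x_1,-y_1]$ has $x_1 + (-y_1) = x_1 - y_1 < 0$, so $y_1 > x_1$; in particular $y_1 > 0$ and $y_1 \geq 1/2$. The good parity genericity constraint rules out $[1/2,1/2]_{\rho_1}$, so in fact $\Sigma_1 = [y_1,-x_1]_{\rho_1}$ has $y_1 \geq 1$ unless $\rho_1$ has the opposite parity, which is excluded since $\sigma$ is of good parity; so $y_1 \geq 1$. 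Now I would take the highest $\rho_1|\cdot|^{y_1}$-derivative. Using Lemma \ref{lem Leibniz rule} and Lemma \ref{lem gen observation}(a), the derivative of the tempered part $\pi(\phi,1)$ with respect to $\rho_1|\cdot|^{y_1}$ is controlled by the multiplicity of $\rho_1 \otimes S_{2y_1+1}$ in $\phi$; meanwhile, since $\sigma \in \Pi_\psi$ with $\psi$ tempered (i.e.\ $\psi \in \Psi(G_n)$... wait, $\psi$ need only be bounded, not generic), I would instead write $\sigma = \pi(\EE)$ for some $\EE$ with $\supp(\EE) = \supp(\psi)$ and compute the derivative on the side of extended multi-segments. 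The contradiction will come from comparing: on one hand, genericity forces a segment $\Delta_{\rho_1}[x_1,-y_1]$ with $y_1$ positive to appear in the Langlands data, which via Lemma \ref{lem gen observation}(b) forces $\phi_{temp}$ (the tempered support) to contain $\rho_1 \otimes S_{2y_1+1}$ with positive multiplicity; but on the other hand, $\psi$ being bounded of good parity and the structure of $\pi(\EE)$ for $\EE$ built from rows $([A_i,B_i]_\rho, l_i, \eta_i)$ with bounded exponents means the extended multisegment rows satisfy $A_i \geq B_i \geq 0$, and the non-tempered contribution $\{[B_i, -A_i]_\rho, \dots, [B_i + l_i, -A_i+l_i]_\rho\}$ has all its segments with "$x$"-value $B_i + l \leq A_i$; tracking through, the highest-derivative-stripping argument will show that a generic $\sigma$ of good parity in a bounded packet must already be tempered, because each time we strip the leading generic segment the resulting representation is still generic (this is the inductive content of Theorem \ref{thm generic sequence}) and still of Arthur type in a bounded packet, and the process must terminate at $\pi(\phi,1)$. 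I expect the cleanest path is: induct on $f$, strip $\Delta_{\rho_1}[x_1,-y_1]$ via the highest $\rho_1|\cdot|^{y_1}$-derivative (here $y_1 > 1/2$ so Theorem \ref{thm derivative-socle} applies), show the result is $\pi(\EE')$ for $\EE'$ still coming from a bounded parameter, apply Lemma \ref{lem gen observation}(a)(b) to see the parity/support bookkeeping forces $x_1 - y_1 \geq 0$, contradicting the strict inequality in the Langlands data unless $f = 0$.

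For part (ii), let $\pi = \pi(\phi,1)$ be a tempered generic representation. By Theorem \ref{thm Arthur generic}, it corresponds to the trivial character, so the associated extended multi-segment is $\EE = \cup_\rho \{([z_i,z_i]_\rho, 0, 1)\}_{i \in (I_\rho, >)}$ in the notation of the proof of Theorem \ref{thm tempered one A-packet} (since all $\eta_i = \varepsilon(\rho \otimes S_{2z_i+1}) = 1$). Then I would simply verify the two bullet conditions of Theorem \ref{thm tempered one A-packet}: the first condition $\varepsilon(\rho \otimes S_a)\varepsilon(\rho \otimes S_{a+2}) = 1$ holds because both factors are $1$; the second condition $\varepsilon(\rho \otimes S_2) = 1$ holds for the same reason. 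Hence $\Psi(\pi)$ is a singleton, i.e.\ $\pi$ lies in exactly one local Arthur packet, which is $\Pi_\phi$ itself. For the general (not necessarily good parity) case, one uses Corollary \ref{cor reduction from nu to gp}: a general tempered generic $\pi$ decomposes as $\tau_{\psi_{np}} \rtimes \pi_{gp}$ with $\pi_{gp}$ tempered generic of good parity, and $\Psi(\pi)$ is in bijection with $\Psi(\pi_{gp})$, which we just showed is a singleton.

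The main obstacle I anticipate is part (i): specifically, making rigorous the claim that stripping the leading generic segment via a highest derivative keeps us inside a \emph{bounded} local Arthur packet and keeps the representation generic, and then correctly matching the Langlands-data bookkeeping of Lemma \ref{lem gen observation}(b) against the tempered-support constraint to force $x_1 \geq y_1$. The subtlety is that a priori $\sigma \in \Pi_\psi$ only tells us $\psi$ is bounded (elements of $\Psi(G_n)$), not that $\psi$ is tempered; one must use the support structure $\supp(\psi)$ — rows $[A_i, B_i]_\rho$ with $A_i + B_i \geq 0$ — and the explicit form of $\pi(\EE)$ in Theorem \ref{thm (L)} (or the general algorithm of \cite{Ato20b}) to extract that the Langlands data of any member cannot have a segment $\Delta_\rho[x,-y]$ with $y$ too large relative to what good parity and boundedness allow, and to see that the highest $\rho_1|\cdot|^{y_1}$-derivative has a predictable effect on both $\EE$ and on $\phi_{temp}$. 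Part (ii), by contrast, is essentially immediate once Theorem \ref{thm tempered one A-packet} and Theorem \ref{thm Arthur generic} are in hand.
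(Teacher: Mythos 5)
Part (ii) of your proposal is fine and is exactly the paper's (brief) argument: the trivial character satisfies both conditions of Theorem \ref{thm tempered one A-packet} by Theorem \ref{thm Arthur generic}, and the general case reduces via Corollary \ref{cor reduction from nu to gp}. The problem is part (i), whose good-parity core you do not actually prove: you sketch a ``strip the leading segment $\Delta_{\rho_1}[x_1,-y_1]$ and induct on $f$'' strategy and yourself flag its key step as an unresolved obstacle. As stated it would not go through: by the Leibniz rule (Lemma \ref{lem Leibniz rule}) the highest $\rho_1|\cdot|^{y_1}$-derivative does not remove that segment but only shortens segments at the $-y_1$ end (via the right derivative) while also acting on the tempered part, and you have no mechanism forcing the resulting representation to remain of Arthur type in a bounded packet, nor any way to ``force $x_1\geq y_1$''. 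The paper's actual argument is a counting comparison at the maximal exponent $y=\max\{y_i : \rho_i\cong\rho_1\}$ (together with $z=\max\{z_j\}$): writing $\sigma=\pi(\EE)$, the multiplicity of $\rho|\cdot|^{y}$ in $\Omega(\pi(\EE))_\rho$ equals $\#\{k : A_k=y\}=\#\{k : A_k\geq y\}$ by \cite[Theorem 4.10]{HLL22}; the order of the highest $\rho|\cdot|^{y}$-derivative of $\sigma$, computed from the generic induced form via Lemma \ref{lem Leibniz rule} and Lemma \ref{lem gen observation}(a), is bounded above by $\#\{k : B_k=y\}$ by \cite[Proposition 8.3]{Xu17b}; combined with $B_k\leq A_k$ this forces $\EE_\rho$ to consist of rows $([y,y]_\rho,0,1)$ together with rows having $A_k<y$, contradicting Lemma \ref{lem gen observation}(b). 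The case $z>y$ is reduced to this one by repeatedly replacing the tempered part $\pi(\phi_x,1)$ by its highest $\rho|\cdot|^{x}$-derivative, which keeps the induced representation irreducible and generic. Neither the two external inputs (\cite[Theorem 4.10]{HLL22}, \cite[Proposition 8.3]{Xu17b}) nor any substitute for this comparison appears in your proposal, so the claimed contradiction never materializes.

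There is also a genuine error in your reduction to good parity. You claim that if $\sigma=\tau_{\psi_{np}}\rtimes\sigma_{gp}$ is non-tempered then $\sigma_{gp}$ is non-tempered, on the grounds that $\tau_{\psi_{np}}$ ``does not contribute to the strict negativity of the Langlands data''. This is false: a Speh factor of $\tau_{\psi_{np}}$ with $b_i>1$ is itself non-tempered and does contribute strictly negative exponents, so $\sigma$ can fail to be tempered even when $\sigma_{gp}$ is tempered. The correct route, which is the paper's, is to use genericity rather than temperedness here: genericity of the irreducible induction $\sigma$ forces each Speh factor of $\tau_{\psi_{np}}$ to be generic, and a Speh representation of $\GL_d(F)$ is generic only if its segments are unlinked, i.e. it has a single column, i.e. $b_i=1$; hence $\psi_{np}$ is tempered, and the good-parity argument then shows $\psi_{gp}$ is tempered, so $\psi$ and $\sigma$ are tempered.
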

\begin{proof}
For Part (i), we first deal with the case that $\psi$ is of good parity. In this case, we write $\sigma =\pi(\EE)$ for $\EE=\cup_{\rho} \EE_{\rho} \in \Rep$. 

Suppose for contradiction that $\sigma$ is not tempered. By Theorem \ref{thm generic sequence}, we write  
\begin{align}\label{eq parabolic induction of generic}
    \sigma= \Delta_{\rho_1}[x_1,-y_1]\times\cdots\times \Delta_{\rho_f}[x_f,-y_f] \rtimes \pi(\phi,1),
\end{align}
and 
\[ \phi= \sum_{j=f+1}^m \rho_j\otimes S_{2z_j +1} .\]
Let $\rho:=\rho_1$ and denote 
\begin{align*}
    y&= \max\{y_i\ | \ \rho_i \cong \rho\},\\
    z&= \max\left(\{ z_j \ | \ \rho_j \cong \rho \} \cup \{0\}\right).
\end{align*}
Note that for any $1 \leq i \leq m$, $ y_i> x_i \geq -y_i$ and Theorem \ref{thm generic sequence} implies $y_i > 1/2$. Also, $y\neq 0$. Now we consider two cases.

\textbf{Case 1.} Suppose $z \leq y$. Write $\EE_{\rho}= \{ ([A_k,B_k]_{\rho},l_k,\eta_k)\}_{k=1}^n$. In this case, we have
    \[y= \max\{ \alpha \ | \ \rho|\cdot|^{\alpha} \in \Omega(\pi(\EE))_{\rho} \}.\] Therefore, \cite[Theorem 4.10]{HLL22} implies
    \[ \# \{ i \ | \ \rho_i \cong \rho,\ y_i= y\}+ \# \{ j \ | \  \rho_j \cong \rho,\ z_j=y\} = \# \{ k \ | \ A_k = y \}=\# \{ k \ | \ A_k \geq y \}. \]
    On the other hand, from the form of (\ref{eq parabolic induction of generic}), the quantity 
    \[\# \{ i \ | \ \rho_i \cong \rho,\  y_i= y\}+ \# \{ j \ | \ \rho_j \cong \rho,\  z_j=y\}\]
    is precisely the order of the highest derivative of $\sigma$ with respect to $D_{\rho|\cdot|^{y}}$ by Lemma \ref{lem Leibniz rule} and Lemma \ref{lem gen observation}(a). Therefore, \cite[Proposition 8.3]{Xu17b} implies
    \[  \# \{ i \ | \  \rho_i \cong \rho,\  y_i= y\}+ \# \{ j \ | \  \rho_j \cong \rho,\ z_j=y\} \leq  \# \{ k \ | \ B_k = y \}. \]
   Then the inequality (from the condition $B_k \leq A_k$) 
   \[ \# \{ k \ | \ A_k \geq y \} \geq \# \{ k \ | \ B_k = y \}  \]
   implies that $\EE_{\rho}$ is of the form 
   \[ \EE_{\rho}= \{([A_k,B_k]_{\rho},l_k,\eta_k)\}_{k=1}^{n- r} + \{([y,y]_{\rho},0,1)^{r}\}  \]
   with $A_k < y$ for $1 \leq k \leq n-r$. However, this contradicts to Lemma \ref{lem gen observation}(b).
   
\textbf{Case 2.} Suppose $z>y$.
In this case, we construct a sequence of irreducible generic representations $\{ \sigma_z, \dots, \sigma_{y+1}, \sigma_y\}$ inductively. For $z \geq x \geq y$ such that $y-x \in \Z$, we define tempered local Arthur parameters $\phi_i$ inductively by
   \begin{enumerate}
       \item [$\oldbullet$] $ \phi_z= \phi$,
       \item [$\oldbullet$] $\pi(\phi_{x-1},1)$ is the highest derivative of $\pi(\phi_x,1)$ with respect to $\rho|\cdot|^{x}$. 
   \end{enumerate}
   Note that by Lemma \ref{lem gen observation}(a), we have 
   \[ \max\{ \alpha \ | \ \rho \otimes S_{2 \alpha+1} \subset \phi_{x}\} =x.\]
   Now we consider
   \[ \sigma_x:=\Delta_{\rho_1}[x_1,-y_1]\times\cdots\times \Delta_{\rho_f}[x_f,-y_f] \rtimes \pi(\phi_x,1). \]
   We claim that if $\sigma_x$ is irreducible and generic then so is $\sigma_{x-1}$.
   Indeed, we have $D_{\rho|\cdot|^{x}}^{(k_x)}( \pi( \phi_x,1))= \pi(\phi_{x-1},1)$, so it follows from Lemma \ref{lem Leibniz rule} that $D_{\rho|\cdot|^{x}}^{(k_x)}(\sigma_x)= \sigma_{x-1}$, and it is the highest derivative. In particular, $\sigma_{x-1}$ is an irreducible parabolic induction of a product of generic representations, and hence $\sigma_{x-1}$ is also generic, which verifies the claim. Now $\sigma_y$ is a generic irreducible representation in Case 1, which gives a contradiction.
   This completes the proof of Part (i) in the good parity case.

Next, we deal with the general case. By Theorem \ref{thm reduction to gp}, we decompose the local Arthur parameter $\psi= \psi_{np} \oplus \psi_{gp} \oplus \psi_{np}^{\vee}$, where $\psi_{gp}$ is of good parity. Then 
\[ \Pi_{\psi}= \{ \tau_{\psi_{np}} \rtimes \pi_{gp} \ | \ \pi_{gp} \in \Pi_{\psi_{gp}} \} \]
where if $\psi_{np}= \oplus_{\rho} (\oplus_{i \in I_{\rho}} \rho \otimes S_{a_i} \otimes S_{b_i})$, 
\[ \tau_{\psi_{np}}= \bigtimes_{\rho} \bigtimes_{i \in I_{\rho}} \begin{pmatrix}\frac{a_i-b_i}{2} &\cdots& \frac{a_i+b_i}{2}-1\\ \vdots & \ddots & \vdots\\ -\frac{a_i+b_i}{2}+1& \cdots & -\frac{a_i-b_i}{2}
\end{pmatrix}_{\rho}.\]
Therefore, we may write our generic representation $\pi= \tau_{\psi_{np}} \rtimes \pi_{gp}$.

We know if $\tau \rtimes \sigma$ is irreducible and generic, then $\tau$ and $\sigma$ are necessarily generic. As a consequence, any generalized Speh representation in the product of $\tau_{\psi_{np}}$ and $\pi_{gp}$ is generic. Our argument in the good parity case then implies $\psi_{gp}$ is tempered.

On the other hand, by the classification of the generic dual of $\GL_d(F)$, each generalized Speh representation in the product of $\tau_{\psi_{np}}$ is generic if and only if all the segments in its Langlands data are not linked, i.e., it contains only one column. In other words, for any $i$, we have 
\[ \frac{a_i-b_i}{2}= \frac{a_i+b_i}{2}-1, \]
which implies $b_i=1$, and hence $\psi_{np}$ is also tempered. This completes the proof of Part (i).

Part (ii) follows directly from Theorem \ref{thm tempered one A-packet}. This completes the proof of the theorem and the proof of the enhanced Shahidi conjecture. 
\end{proof}

Finally, we remark that the proof of Part (2) Conjecture \ref{Enhanced Shahidi conjecture intro}(2) is similar to that of Part (1) as given in Theorem \ref{proof of enh sha conj}, which we omit and just record the statements here.

\begin{thm}\label{thm non-unitary Enhanced Shahidi}
Let $G_n$ be a symplectic or split odd special orthogonal group.
\begin{enumerate}
    \item [(i)] Suppose $\sigma \in \Pi_{\psi}$ for some $\psi \in \Psi^+(G_n)$. Then $\sigma$ is generic only if $\psi$ is generic. Hence, Conjecture \ref{Enhanced Shahidi conjecture intro}(2) holds for $G_n$.
    \item [(ii)] Suppose that $\pi$ is generic and $\pi\in\Pi_\psi$ for some $\psi\in\Psi^{+}(G_n).$ Then $\Psi(\pi)=\{\psi\}.$
\end{enumerate}
\end{thm}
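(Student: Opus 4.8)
\emph{Proof proposal.} The plan is to follow the template of Theorem \ref{proof of enh sha conj}, reducing both statements to the good parity case handled there. For part (i), let $\sigma \in \Pi_\psi$ be generic with $\psi \in \Psi^+(G_n)$, and fix the decomposition $\psi = \psi_{nu,>0} + \psi_{np} + \psi_{gp} + \psi_{np}^{\vee} + \psi_{nu,>0}^{\vee}$ as in Corollary \ref{cor reduction from nu to gp}. By Theorem \ref{thm red from nu to gp} we may write $\sigma = \tau_{\psi_{nu,>0}} \times \tau_{\psi_{np}} \rtimes \pi_{gp}$ for a unique $\pi_{gp} \in \Pi_{\psi_{gp}}$, and this parabolic induction is irreducible. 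Since an irreducible generic parabolic induction has generic inducing data (heredity of Whittaker functionals), each of $\tau_{\psi_{nu,>0}}$, $\tau_{\psi_{np}}$, $\pi_{gp}$ is generic; in particular every shifted Speh block occurring in $\tau_{\psi_{nu,>0}}$ and $\tau_{\psi_{np}}$ is a generic representation of a general linear group.

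Next I would analyze the three pieces separately, exactly as in the general case of Theorem \ref{proof of enh sha conj}. Genericity of $\pi_{gp}$ together with Theorem \ref{proof of enh sha conj}(i) (applicable since $\psi_{gp}\in\Psi(G_n)$) forces $\pi_{gp}$, hence $\psi_{gp}$, to be tempered. For $\tau_{\psi_{np}}$ and $\tau_{\psi_{nu,>0}}$, the block attached to a summand $\rho \otimes S_{a_i} \otimes S_{b_i}$ is a generalized Speh representation with $b_i$ columns, which by the classification of the generic dual of $\GL_d(F)$ is generic only if it consists of a single (unlinked) segment, i.e. $b_i = 1$. Hence $\psi_{np}$ and $\psi_{nu,>0}$ restrict trivially to the Arthur $\SL_2$, and combining with $\psi_{gp}$ tempered shows $\psi|_{\SL_2^A}$ is trivial, so $\psi$ is generic. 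The reverse implication needed to conclude Conjecture \ref{Enhanced Shahidi conjecture intro}(2) is immediate: if $\psi$ is generic then $\psi_{gp}$ is tempered, and the member $\tau_{\psi_{nu,>0}} \times \tau_{\psi_{np}} \rtimes \pi(\psi_{gp},1) \in \Pi_\psi$, irreducible by Theorem \ref{thm red from nu to gp}, is generic since it contains the generic tempered representation $\pi(\psi_{gp},1)$ from Theorem \ref{thm Arthur generic}.

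For part (ii), let $\pi$ be generic with $\pi \in \Pi_\psi$, $\psi \in \Psi^+(G_n)$. By part (i), $\psi$ is generic, and by the analysis above $\pi = \tau_{\psi_{nu,>0}} \times \tau_{\psi_{np}} \rtimes \pi_{gp}$ with $\pi_{gp}$ tempered generic of good parity, so $\pi_{gp} = \pi(\psi_{gp},1)$. Corollary \ref{cor reduction from nu to gp} gives
\[
\Psi(\pi) = \{\, \psi_{nu,>0} + \psi_{np} + \psi_{gp}' + \psi_{np}^{\vee} + \psi_{nu,>0}^{\vee} \ \mid \ \psi_{gp}' \in \Psi(\pi_{gp}) \,\},
\]
so it suffices to show $\Psi(\pi_{gp})$ is a singleton. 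Because the character of $\pi_{gp}$ is trivial, the two conditions of Theorem \ref{thm tempered one A-packet} hold vacuously, whence $\Psi(\pi_{gp}) = \{\psi_{gp}\}$ and $\Psi(\pi) = \{\psi\}$.

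There is no serious new obstacle here: the substantive input is already contained in Theorem \ref{proof of enh sha conj} (the good parity temperedness criterion for generic representations) and Theorem \ref{thm tempered one A-packet} (the operator criterion for a tempered representation to lie in a single packet). The one point requiring a little care is the reduction to good parity, namely applying heredity of Whittaker functionals to the correct Levi and recording that a generalized Speh block is generic exactly when it is a single Steinberg segment, equivalently $b_i = 1$; this is handled verbatim as in the proof of Theorem \ref{proof of enh sha conj}.
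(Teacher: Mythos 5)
Your proof is correct and follows exactly the route the paper itself indicates: the authors omit the proof of Theorem \ref{thm non-unitary Enhanced Shahidi}, remarking only that it is ``similar to that of Part (1) as given in Theorem \ref{proof of enh sha conj},'' and your write-up is a faithful execution of that plan — reduce via Theorem \ref{thm red from nu to gp} and heredity of Whittaker functionals to the good parity piece and the Speh blocks, apply Theorem \ref{proof of enh sha conj}(i) to force $\pi_{gp}$ (hence $\psi_{gp}$) tempered, deduce $b_i=1$ in $\psi_{np}$ and $\psi_{nu,>0}$, then invoke Theorem \ref{thm tempered one A-packet} with the trivial character and Corollary \ref{cor reduction from nu to gp} for part (ii). The only cosmetic point is your phrase ``generic since it contains the generic tempered representation $\pi(\psi_{gp},1)$'' — more precisely, the induction is generic because it is an irreducible parabolic induction whose entire inducing datum (the $b_i=1$ Steinberg blocks and $\pi(\psi_{gp},1)$) is generic; but the argument is the intended one.
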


We end this section with the following remark on Steinberg representations of $G_n$ and the proof of the enhanced Shahidi conjecture for Steinberg representations given in \cite[Proposition 8.2]{MS20}.

\begin{remark}\label{rmk Steinberg}\ 
\begin{enumerate}
    \item Let $G$ be a quasi-split group over $F$ with a Borel subgroup $B$. Denote $\mathbf{1}_P$ the trivial representation of $P$. The Steinberg representation $\St_{G}$ of $G$ is the unique irreducible subquotient of $\mathrm{ind}_{B}^G \mathbf{1}_B$ (non-normalized induction) that is not a subquotient of $\mathrm{ind}_{P}^G \mathbf{1}_P$ for any other parabolic subgroup $P$ of $G$. It is known that $\St_{G}$ is square-integrable, generic, and its Aubert-Zelevinsky dual is the trivial representation.  

For $G_n$, we set $N=2n$ if $G_n= \SO_{2n+1}(F)$ and $N=2n+1$ if $G_n= \Sp_{2n}(F)$. Let $\rho$ be the trivial representation of $W_F$ and 
\[ \phi_{\St}:= \rho \otimes S_{N}.\]
Then the Steinberg representation $\St_{G_n}$ is isomorphic to $\pi(\phi_{\St}, 1)$, the unique representation in the tempered $L$-packet $\Pi_{\phi_{\St}}$. Indeed, there is an injection
\[ \pi(\phi_{\St}, 1) \hookrightarrow \rho|\cdot|^{\half{N-1}} \times \rho|\cdot|^{\half{N-3}} \times \cdots \times \rho|\cdot|^{\half{N+1}-n}\rtimes \mathbf{1}_{G_0}, \]
which shows that $D(\pi(\phi_{\St},1))\neq 0$ (see \cite[Lemma 2.5]{HLL22}), where
\[ D:=D_{\rho|\cdot|^{\half{N-1},\dots,\half{N+1}-n}},\]
and $\pi(\phi_{\St},1)$ is a subquotient of $\mathrm{ind}_{B}^{G_n} \mathbf{1}_{B}$. On the other hand, $D(\mathrm{ind}_{P}^{G_n} \mathbf{1}_P)=0$ for any other parabolic subgroup $P$ of $G_n$ by Lemma \ref{lem Leibniz rule} and the fact that 
\[ D_{\rho|\cdot|^{x}}(\mathbf{1}_{G_m})=0\]
for any $x>0$ and $m \in \Z_{>0}$. Therefore, $\pi(\phi_{\St},1)= \St_{G_n}$. 

\item The enhanced Shahidi conjecture implies $\Psi(\St_{G_n})$ is a singleton. This is already proven by Magaard and Savin $($\cite[Proposition 8.2]{MS20}$)$ using a different approach based on the following property of $\phi_{\St}$. Let $\psi_{\St}= \phi_{St} \otimes S_1$. We have
\[ \{ \psi \ | \ \psi^{\Delta}= (\psi_{\St})^{\Delta} \}= \{\psi_{\St}, \widehat{\psi_{\St}} \},\]
where $\psi^{\Delta}$ is an $L$-parameter defined by $\psi^{\Delta}(w,x):=\psi(w,x,x)$. Thus \cite[Corollary 4.2]{Moe09a} implies $\Psi(\St_{G_n}) \subseteq  \{\psi_{\St}, \widehat{\psi_{\St}} \}$. However, since the trivial representation $\mathbf{1}_{G_n}$ is not tempered, it is not in $\Pi_{\psi_{\St}}$, and hence $\St_{G_n}= \widehat{\mathbf{1}_{G_n}}$ is not in $\Pi_{\widehat{\psi_{\St}}}$. Therefore, $\Psi(\St_{G_n})=\{\psi_{\St}\}$.
\end{enumerate}

\end{remark}

\section{Unramified representations of Arthur type}\label{sec unram classification}

In this section, we classify representations of Arthur type of $G_n$ which are unramified with respect to $\mathrm{G}_n(\mathcal{O}_F)$ (Theorem \ref{prop unram L-data}) and consider its applications. These unramified representations are important for global considerations, see \S\ref{sec global}. Therefore, it is desirable to determine all the local Arthur packets to which an unramified representation of Arthur type belongs. We recall an algorithm in \cite{HLL22} to determine whether a representation is of Arthur type or not from its $L$-data in \S \ref{sec algorithm}. Then we proceed the classification in \S \ref{sec classify unram}.

\subsection{An algorithm to determine Arthur type}\label{sec algorithm}
In this subsection, we review an algorithm (\cite[Algorithm 7.9]{HLL22}, see Algorithm \ref{alg Arthur type} below) which determines if a given representation is of Arthur type. We begin by recalling some notation needed for the algorithm.

\begin{defn}[{\cite[Definition 4.1]{HLL22}}]\label{def omega E} \
\begin{enumerate}
    \item Suppose 
\[ \FF=\{([A_i,B_i]_{\rho},l_i,\eta_i)\}_{i \in (I_{\rho, >})}. \]
Define ordered multi-sets  
\begin{align*}
    \Omega(\FF):=& \sum_{i\in I_{\rho}} [A_i,B_i]_{\rho}= \{ \rho|\cdot|^{\alpha_1} ,\dots,\rho|\cdot|^{\alpha_t} \},\\
    \overline{\Omega(\FF)}:=& \sum_{i\in I_{\rho}} [B_i,-A_i]_{\rho}= \{ \rho|\cdot|^{\beta_1} ,\dots,\rho|\cdot|^{\beta_r} \},
\end{align*}
where $\alpha_1 \leq \cdots\leq \alpha_t$ and $\beta_1 \geq \cdots\geq \beta_r$. 
Suppose $\EE=\cup_{\rho} \EE_{\rho}$ is an extended multi-segment. Fix an arbitrary order on the set 
\[ \{ \rho \ | \ \EE_{\rho} \neq \emptyset \}= \{ \rho_1,\dots,\rho_r\}.\]
We define multi-sets $\Omega(\EE)$ and $\overline{\Omega(\EE)}$ to be the sum of multi-sets 
\begin{align*}
    \Omega(\EE)&:= \Omega(\EE_{\rho_1}) +\cdots+ \Omega(\EE_{\rho_r}),\\
    \overline{\Omega(\EE)}&:= \overline{\Omega(\EE_{\rho_1})} +\cdots+ \overline{\Omega(\EE_{\rho_r})}.
\end{align*}

\item For each ordered multi-set $\Omega= \{ \rho_1|\cdot|^{\gamma_1} ,\dots,\rho_t|\cdot|^{\gamma_t} \}$, we define
\begin{align*}
     D_{\Omega}&:= D_{\rho_t|\cdot|^{\gamma_t}} \circ\cdots\circ D_{\rho_1|\cdot|^{\gamma_1}},\\
      S_{\Omega}&:= S_{\rho_1|\cdot|^{\gamma_1}} \circ\cdots\circ S_{\rho_t|\cdot|^{\gamma_t}}.
\end{align*}
For an extended multi-segment $\EE$, we define $D_{\Omega(\EE)}= \circ_{\rho} D_{\Omega(\EE_{\rho})}$. Note that the derivative is independent of the composition order of $D_{\Omega(\EE_\rho)}$.
\end{enumerate}
\end{defn}

Similarly to Definition \ref{def omega E}, we define a multi-set attached to a representation.

\begin{defn}[{\cite[Definition 4.6]{HLL22}}]
For an irreducible representation
\[ \pi= L\left( \Delta_{\rho_1}[x_1,-y_1],\dots,\Delta_{\rho_t}[x_t,-y_t]; \pi(\sum_{j=t+1}^m \rho_j\otimes S_{2z_j+1},\varepsilon) \right), \]
We define
\[ \Omega(\pi):= \{ \rho_1|\cdot|^{x_1},\dots,\rho_t|\cdot|^{x_t} \} + \{ \rho_1|\cdot|^{y_1},\dots, \rho_t|\cdot|^{y_t} \} + \{ \rho_{t+1}|\cdot|^{z_{t+1}},\dots,\rho_m|\cdot|^{z_m} \}. \]
We denote $\Omega(\pi)_\rho$ to be the maximal sub-multi-set of $\Omega(\pi)$ whose elements are of the form $\rho|\cdot|^x$ for some $x \in \R$.
\end{defn}

Now we state an algorithm to determine when a representation is of Arthur type.

\begin{algo}[{\cite[Algorithm 7.9]{HLL22}}]\label{alg Arthur type}
Given a representation $\pi$ of good parity, proceed as follows:
\begin{enumerate}
    \item [\textbf{Step 0}:] 
    Set $\psi=0$. Repeat steps 1 to 4 for each $\rho \in \{ \rho \ | \ \Omega(\pi)_{\rho} \neq \emptyset\}$.
    \item [\textbf{Step 1:}] Let $A= \max\{ x \ | \ \rho|\cdot|^{x} \in \Omega(\pi)_{\rho}\}$ and $\epsilon\in\{0,1/2\}$ such that $A+\epsilon \in \Z$. Set $\Omega^{+}=\emptyset=\Omega^{-}$.
    \item [\textbf{Step 2:}] Compute the following set
    \begin{align*}
        \mathcal{B}=& \{ B >1/2 \ | \ D_{\rho|\cdot|^{B}}(\pi) \neq 0 \}= \{ B_1,\dots,B_r\},
    \end{align*}
    where $B_i$ is decreasing. For each $1 \leq i \leq r$, compute $($by \cite[Theorem 7.1]{AM20}$)$ recursively the integer $k_{i,t}$ and representation $\pi_{i,t}$ for $t$ in the segment $[A+1,B_i-1]$ via 
    \[\begin{cases}
     \pi_{i,B_{i}-1}= \pi,\\
     \pi_{i,t}=D_{\rho|\cdot|^{t}}^{(k_{i,t})}(\pi_{i,t-1}) \text{ is the highest derivative.}
    \end{cases}\]
If $k_{i,A+1} \neq 0$, then $\pi$ is not of Arthur type and the procedure ends. Set $k_{i,t}:=0$ if $t \not\in [A+1,B_i-1]$.
    
    Denote $K_{i,t}:=k_{i,t}-  k_{i-1,t}$. For $t \in [A+1,B_i+1]$, if $ K_{i,t}> K_{i,t-1}$, then $\pi$ is not of Arthur type and the procedure ends. If $K_{i,t}<K_{i,t-1}$, then add $K_{i,t-1}-K_{i,t}$ copies of $\rho \otimes S_{(t-1)+B_i+1}\otimes S_{(t-1)-B_i+1}$ to $\psi$ and add the same copies of elements in the segment $[t-1,B_i]_{\rho}$ in $\Omega^{+}$.
    \item [\textbf{Step 3:}]Reorder
    \[\Omega^{+}=\{\rho|\cdot|^{x_1},\dots,\rho|\cdot|^{x_r}\}\]
    such that $x_1 \leq \cdots\leq x_r$. For $t \in [A+ \epsilon, 1]$, denote
    \[ sh^t(\Omega^{+})=\{\rho|\cdot|^{x_1+t},\dots,\rho|\cdot|^{x_r+t}\}. \]
    Compute the representation $($by \cite[Theorem 7.1]{AM20}$)$
    \[ \pi_{A}= S_{ sh^{A+\epsilon}(\Omega^{+})} \circ\cdots \circ S_{sh^{1}(\Omega^{+})}(\pi),\]
    and the set 
    \[\overline{\mathcal{B}}= \{ B <0 \ | \ D_{\rho|\cdot|^{B}}(\pi_A) \neq 0 \}= \{ \overline{B_1},\dots,\overline{B_{\overline{r}}}\},\]
     where $\overline{B_j}$ is increasing. For each $1 \leq i \leq \overline{r}$, compute $($by \cite[Proposition 6.1]{AM20}$)$ the integer $\overline{k_{i,t}}$ and representation $\overline{\pi_{i,t}}$ for $t$ in the segment $[\overline{B_i}+1,-A-1]$ recursively by 
    \[\begin{cases}
     \overline{\pi_{i,\overline{B_{i}}+1}}= \pi_A, \\
    \overline{\pi_{i,t}}= D_{\rho|\cdot|^{t}}^{(\overline{k_{i,t}})}(\overline{\pi_{i,t+1}}) \text{ is the highest derivative.}
    \end{cases}\]
    If $\overline{k_{i,-A-1}} \neq 0$, then $\pi$ is not of Arthur type and the procedure ends. Set $\overline{k_{i,t}}:=0$ if $t \not\in [\overline{B_{i}}+1,-A-1]$.
    
    Denote $\overline{K_{i,t}}:=\overline{k_{i,t}}-  \overline{k_{i-1,t}}$. For $t \in [\overline{B_i}-1,-A-1]$, if $ \overline{K_{i,t}}> \overline{K_{i,t+1}}$, then $\pi$ is not of Arthur type and the procedure ends. If $\overline{K_{i,t}}<\overline{K_{i,t+1}}$, then add $\overline{K_{i,t+1}}-\overline{K_{i,t}}$ copies of $\rho \otimes S_{-(t+1)+\overline{B_i}+1}\otimes S_{-(t+1)-\overline{B_i}+1}$ to $\psi$, and add the same copies of elements in the segment $[ -t -1,\overline{B_i}]_{\rho}$ in $\Omega^{-}$.
    \item [\textbf{Step 4:}]
    Let $\Omega=\Omega^{+} + \Omega^{-}$. Denote the multiplicity of $\rho|\cdot|^{t}$ in $\Omega\setminus \Omega(\pi)$ (resp. $\Omega(\pi)\setminus \Omega$) by $ m_{1,t}$ (resp. $m_{2,t}$), and let $ M_t =(m_{1,-t-1}-m_{1,t})+m_{2,t}$. 
    
    For any $t \in [A+1, \epsilon+1]$, if $M_{t}>M_{t-1}$, then $\pi$ is not of Arthur type and the procedure ends. If $M_{t}<M_{t-1}$, add $M_{t}-M_{t-1}$ copies of $\rho \otimes S_{(t-1)+\epsilon+1}\otimes S_{(t-1)-\epsilon+1}$ to $\psi$. 
    \item [\textbf{Step 5:}] Construct the local Arthur packet $\Pi_{\psi}$. If there exists $\EE$ in this packet such that $\pi(\EE)=\pi$, then $\pi$ is of Arthur type.  Otherwise, $\pi$ is not of Arthur type. 
    \end{enumerate}
\end{algo}

\subsection{Classification of unramified representations of Arthur type}\label{sec classify unram}
In this subsection, we classify all unramified representations of Arthur type.

Let $\pi$ be a representation of Arthur type and write $\pi= \tau_{nu,>0}\times \tau_{np} \rtimes \pi_{gp} $ by Theorem \ref{thm red from nu to gp}. Then, $\pi$ is unramified if and only if each of $\tau_{nu,>0}, \tau_{np}$ and $\pi_{gp}$ is unramified. Thus, the problem is reduced to the classification of unramified representations of Arthur type and of good parity. We answer this question in the following theorem, which is proved by applying Algorithm \ref{alg Arthur type}. Note that if $\pi \in \Pi_{\psi}$ for some $\psi \in \Psi^+(G_n)$ and $\pi$ is of good parity, then $\psi \in \Psi(G_n)$ automatically by Theorem \ref{thm red from nu to gp} and the definition of good parity (Definition \ref{def good parity reps}). 

\begin{thm}\label{prop unram L-data}
Suppose $\pi$ is an irreducible representation of $G_n$ of good parity, written as 
\[\pi=L(\Delta_{\rho_1}[x_1,y_1],\dots, \Delta_{\rho_f}[x_f,y_f];\pi(\phi,\varepsilon)),\]
where
\[ \phi= \sum_{i=f+1}^m \rho_i \otimes S_{2z_i+1}.  \]
Then $\pi$ is unramified and of Arthur type if and only if the following conditions hold.
\begin{enumerate}
    \item [(i)] $x_i=y_i$ for $1 \leq i \leq f$, $z_i=0$ for $f+1 \leq i \leq m$, and $\rho_i$ are unramified characters for $1 \leq i \leq m$. 
    \item [(ii)] For any $\rho$ and $x \geq 0$, define 
    \[ m_{\rho,x}:= \begin{cases}
    \#\{1 \leq i\leq f\ | \ \rho_i \cong \rho,\  x_i=-x\} &\text{if }x>0,\\
    \#\{f+1 \leq i\leq m\ | \ \rho_i \cong \rho\} &\text{if }x=0.
    \end{cases} \]
    Then for any $\rho$ and $x \geq 0$, $m_{\rho,x+1}\leq m_{\rho,x}$.
    \item [(iii)] The character $\varepsilon$ is trivial.
\end{enumerate}
Moreover, in this case, we have $\pi=\pi(\EE)$ where (set $m_{\rho,x}=0$ for $x <0$)
\begin{align}\label{eq EE unramified}
    \EE= \cup_{\rho} \{([x,-x]_{\rho}, \lfloor x+1/2 \rfloor,1)^{m_{\rho,x-1}-m_{\rho,x}}\}_{x \in \half{1}\Z}.
\end{align}
Here the exponent $m_{\rho,x-1}-m_{\rho,x}$ denotes the multiplicity of the extended segment.
\end{thm}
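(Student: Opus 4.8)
The plan is to prove the two directions of the stated equivalence and then pin down the shape of $\EE$, throughout leaning on Atobe's parametrization (Theorem \ref{thm Atobe's reformulation}) and the explicit formula of Theorem \ref{thm (L)}. First I would dispose of the word ``unramified'': an irreducible representation of $G_n$ has a nonzero $\mathrm{G}_n(\mathcal{O}_F)$-fixed vector if and only if it is the Langlands quotient of an unramified standard module. Combining the Langlands classification with the fact that the Steinberg representation of $\GL_m(F)$ has no unramified vectors for $m\geq 2$, this forces every segment $\Delta_{\rho_i}[x_i,y_i]$ to reduce to a single unramified character ($\rho_i$ unramified, $x_i=y_i$) and forces $\pi(\phi,\varepsilon)$ to be the unique unramified tempered representation of its $G_m$, hence $z_i=0$ and $\rho_i$ unramified for $f+1\leq i\leq m$, and (by compatibility of the Langlands/Arthur normalization with the hyperspecial vertex and the fixed Whittaker datum, cf. Theorem \ref{thm Arthur generic}) $\varepsilon$ trivial. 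This is precisely (i) together with (iii), and the converse is immediate. So the content left is: among $\pi$ with $L$-data of the shape (i), (iii), the ones of Arthur type are exactly those satisfying (ii), realized by the $\EE$ in \eqref{eq EE unramified}.

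For the ``if'' direction, assume (i), (ii), (iii) and set $\EE$ as in \eqref{eq EE unramified}; condition (ii) is exactly what makes the multiplicities $m_{\rho,x-1}-m_{\rho,x}$ (appropriately indexed) non-negative, so $\EE$ is well defined. I would then check it is a genuine extended multi-segment for $G_n$: each row has $A_i+B_i=0\geq 0$; the associated $\psi_\EE=\bigoplus_{\rho}\bigoplus_{x}(\rho\otimes S_1\otimes S_{2x+1})^{\oplus(\cdots)}$ is a good-parity local Arthur parameter (the dimension identity $\sum \dim(\rho)(2x+1)(\cdots)=N$ follows from a telescoping sum against the $L$-data of $\pi$, and good parity of $\rho\otimes S_1\otimes S_{2x+1}$ is inherited from the characters occurring in $\pi$); the sign condition \eqref{eq sign condition} holds because all $\eta_i=1$ and $l_i=\lfloor x+1/2\rfloor=\lfloor\half{A_i-B_i+1}\rfloor$ makes every local factor $(-1)^{[b_i/2]+l_i}$ equal to $1$; and ordering each $I_\rho$ compatibly with the $B_i$ gives an admissible order satisfying $(P')$. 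Moreover $\EE$ satisfies condition $(L)$ of Theorem \ref{thm (L)} (the first bullet is vacuous since all $A_i+B_i=0$, the second is the defining relation for $l_i$, the third is automatic since all signs are $+1$), so Theorem \ref{thm (L)} yields $\pi(\EE)\neq 0$, $\pi(\EE)\in\Pi_{\psi_\EE}$, and an explicit description of its $L$-data. A direct bookkeeping — writing out $\sum_\rho\sum_i\{[B_i,-A_i]_\rho,\ldots,[B_i+l_i,-A_i+l_i]_\rho\}$ (each segment a single character, as $A_i+B_i=0$) and the tempered part $\phi=\bigoplus_\rho\bigoplus_{i:\,2\mid(A_i-B_i)}\rho\otimes S_1$, then matching against the $L$-data of $\pi$ via the telescoping relation between the $m_{\rho,x}$ and the multiplicities in \eqref{eq EE unramified} — identifies $\pi(\EE)$ with $\pi$; in particular $\pi$ is of Arthur type.

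For the ``only if'' direction, suppose $\pi$ is unramified (so (i), (iii)) and $\pi\in\Pi_\psi$ for some $\psi\in\Psi^+(G_n)$; since $\pi$ is of good parity, $\psi\in\Psi_{gp}(G_n)$. All members of $\Pi_\psi$ share the same cuspidal support, which is determined by $\psi$, so $\pi$ unramified forces that support to be unramified, hence every $\rho$ occurring in $\psi$ is unramified and $\phi_\psi$ is unramified. Then $\Pi_\psi\supseteq\Pi_{\phi_\psi}$ (the right-hand side being the $(L)$-type members, by Theorem \ref{thm (L)}) contains the unramified representation attached to $\phi_\psi$ via Satake, i.e. the Langlands quotient of $\phi_\psi$; as there is at most one unramified irreducible representation with a given cuspidal support, $\pi$ equals this Langlands quotient, so $\pi=\pi(\EE)$ for the unique $\EE$ with $\psi_\EE=\psi$ satisfying $(L)$. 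Reading off the $L$-data from Theorem \ref{thm (L)} and imposing that $\pi$ is unramified (every $\Delta_{\rho_i}[x_i,y_i]$ a character) forces $A_i+B_i=0$ for every row, i.e. every summand of $\psi$ has the form $\rho\otimes S_1\otimes S_b$; expressing the multiplicities of these summands through the $m_{\rho,x}$ of $\pi$ recovers \eqref{eq EE unramified} and shows that these multiplicities being non-negative is precisely condition (ii). (Alternatively, (ii) can be verified by running Algorithm \ref{alg Arthur type} on $\pi$ and observing that, for $\pi$ of shape (i)/(iii), the monotonicity checks in Steps 2--4 collapse exactly to $m_{\rho,x+1}\leq m_{\rho,x}$.)

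The conceptual scaffolding above is short; the main obstacle is the explicit $L$-data matching in Step 2 (equivalently, the reading-off in Step 3). One must track carefully how the formula of Theorem \ref{thm (L)} distributes the characters $\rho|\cdot|^{-x},\rho|\cdot|^{-x+1},\dots$, and in particular how the segments of exponent $0$ and $\pm\tfrac12$ that appear when $l_i$ is as large as $\lfloor b_i/2\rfloor$ are reabsorbed into the tempered part of the Langlands data. Getting the index shift in \eqref{eq EE unramified} exactly right — the precise relation between $m_{\rho,x}$ and the multiplicity of the row $([x,-x]_\rho,\lfloor x+1/2\rfloor,1)$, and the boundary conventions $m_{\rho,x}=0$ for $x<0$ — is the delicate computational point.
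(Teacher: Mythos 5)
Your overall architecture (equivalence of "unramified" with (i)+(iii) via Langlands classification, then the $\EE$ from \eqref{eq EE unramified} and Theorem \ref{thm (L)} for the "if" direction, and Algorithm \ref{alg Arthur type} for the "only if" direction) is the same as the paper's. But the argument you offer as your primary route for condition (ii) has a real gap, and the parenthetical ``alternative'' at the end is in fact the only route that works and is what the paper does.

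The gap is the step ``$\pi$ unramified forces the cuspidal support to be unramified, hence every $\rho$ occurring in $\psi$ is unramified and $\phi_\psi$ is unramified.'' The unramified cuspidal support only tells you the $\rho$'s occurring in $\psi$ are unramified characters; it does \emph{not} tell you that $\psi$ is trivial on the Deligne-$\SL_2(\BC)$, i.e.\ that $a_i=1$ for every summand $\rho\otimes S_{a_i}\otimes S_{b_i}$. For instance $\psi=\rho\otimes S_2\otimes S_2\oplus\cdots$ with $\rho$ an unramified character has unramified infinitesimal character but $\phi_\psi$ contains $\rho\otimes S_2|\cdot|^{\pm1/2}$, so $\phi_\psi$ is not unramified as a Satake parameter. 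What you would actually need is that an unramified $\pi$ cannot lie in $\Pi_\psi$ unless $\psi$ is trivial on Deligne-$\SL_2$; but that is precisely the content of Proposition \ref{cor unram dual}, which the paper proves \emph{as a consequence of} this theorem. So your chain ``$\pi$ unramified $\Rightarrow\phi_\psi$ unramified $\Rightarrow\pi\in\Pi_{\phi_\psi}$ $\Rightarrow\pi=\pi(\EE)$ with $\EE$ satisfying (L)'' is circular, or at best unestablished. (A secondary issue in the same chain: the $\EE$ with $\psi_\EE=\psi$ satisfying (L) is \emph{not} unique — the signs $\eta_i$ can vary while preserving (L) — so you would further need the Arthur normalization to identify which one is unramified.) The paper sidesteps all of this by not fixing $\psi$ at all: it runs Algorithm \ref{alg Arthur type} directly on the $L$-data of $\pi$ (using (i), showing $D_{\rho|\cdot|^x}(\pi)=0$ for $x>0$ so Step 2 is trivial, computing the Step 3 quantities explicitly, and showing that failure of $m_{\rho,x+1}\leq m_{\rho,x}$ produces $M_{x+1}>M_x$ in Step 4), which determines both that (ii) is necessary and that there is an anti-tempered $\psi$ with $\pi\in\Pi_\psi$.

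Two smaller points. First, your reference for ``unramified $\Rightarrow$ $\varepsilon$ trivial'' is Theorem \ref{thm Arthur generic}, but that theorem concerns \emph{generic} representations; the fact you want — that for unramified $\phi_\psi$ the unramified member of $\Pi_{\psi}$ corresponds to the trivial character of $\widehat{\mathcal S}_{\psi}$ — is \cite[Theorem 1.5.1(a)]{Art13}, paired with \cite[Theorem 3.6]{Ato20b} to identify that member as $\pi(\EE)$. Second, in the ``if'' direction, the paper does not actually carry out the $L$-data matching you describe; instead it shows $\pi(\EE)=\pi$ via Theorem \ref{thm (L)} and then identifies this representation as the unramified one via Satake together with the same two citations. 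Your direct bookkeeping works in principle, but as you note yourself, the index bookkeeping (how the segments $[B_i+j,-A_i+j]_\rho$ get absorbed when $A_i+B_i=0$, and the boundary conventions for $m_{\rho,x}$) is exactly the delicate point — you should verify it rather than wave at it, since the $j$-range and the fate of the $j=l_i$ term are easy to get wrong.
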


\begin{proof}
Suppose $\pi$ satisfies all the conditions. Then it follows from Theorem \ref{thm (L)}  that $\pi(\EE)=\pi$, and hence $\pi$ is of Arthur type.
 Condition (i) implies that the $L$-parameter $\phi_{\pi}=\phi_{\psi_{\EE}}$ is unramified. Thus the $L$-packet $\Pi_{\phi_{\psi_{\EE}}}$ contains an unramified representation by the Satake isomorphism (\cite{Sat63}). According to \cite[Theorem 1.5.1(a)]{Art13}, this unramified representation in the local Arthur packet $\Pi_{\psi_{\EE}}$ must correspond to the trivial character in $\widehat{\mathcal{S}}_{\psi}$. On the other hand,  \cite[Theorem 3.6]{Ato20b} confirms that $\pi(\EE)$ is the only representation in $\Pi_{\psi}$ corresponding to the trivial character in $\widehat{\mathcal{S}}_{\psi}$. Therefore, we conclude that this unramified representation is exactly $\pi= \pi(\EE)$.

Next, we show that if $\pi$ is unramified and of Arthur type, then it satisfies Conditions (i), (ii) and (iii). Condition (i) follows from $\phi_{\pi}$ being unramified. Assuming Condition (ii) is already verified, then by \cite[Theorem 9.5]{HLL22}, we have $\pi=\pi(\EE')$, where
\[\EE'= \cup_{\rho} \{([x,-x]_{\rho}, \lfloor x+1/2 \rfloor,\varepsilon_{\rho})^{m_{\rho,x-1}-m_{\rho,x}}\}_{x \in \half{1}\Z},\]
with $\varepsilon_{\rho}:=\varepsilon(\rho\otimes S_1)$ if $m_{\rho,0}>0$ and $\varepsilon_{\rho}:=1$ otherwise. Again \cite[Theorem 3.6]{Ato20b} and \cite[Theorem 1.5.1(a)]{Art13} show that $\pi$ is unramified only if $\varepsilon_{\rho}=1$ for all $\rho$, which verifies Condition (iii). Therefore, it remains to show that $\pi$ is of Arthur type only if Condition (ii) holds. We confirm this by applying Algorithm \ref{alg Arthur type} on $\pi$.

In Step 1 of Algorithm \ref{alg Arthur type}, we have $A= \max\{ -x_i \ | \ 1 \leq i \leq f,\  \rho_i \cong \rho  \}$. Next, it is a direct consequence of Condition (i) and \cite[Theorem 7.1]{AM20} that $D_{\rho|\cdot|^{x}}(\pi) = 0$ for any $x > 0$. This implies that Step 2 in Algorithm \ref{alg Arthur type} is trivial. 

For Step 3, it follows from  \cite[Proposition 6.1]{AM20} that if $\pi$ satisfies Condition (i), then for $0 < x \leq A$,
\[ D_{\rho|\cdot|^{-x}}^{(m_{\rho,x}-m_{\rho,x+1})}(\pi)\]
is the highest derivative. Moreover, the resulting representation also satisfies Condition (i). From this observation, we compute the ingredients in Step 3 of the algorithm as follows.
\begin{enumerate}
    \item [$\oldbullet$] $\overline{\mathcal{B}}= \{ -x\ | \ 0< x \leq A,\ m_{\rho,x}-m_{\rho,x+1}>0\}$.
    \item [$\oldbullet$] $\overline{k_{i,t}}= \max(m_{\rho,t}-m_{\rho,t+1},0)$.
    \item [$\oldbullet$] $\overline{K_{i,t}}= m_{\rho,-\overline{B_i}}-m_{\rho,-(\overline{B_i}-1)}$ if $t= \overline{B_i}$, and $\overline{K_{i,t}}=0$ $t<\overline{B_i}$. 
\end{enumerate}
Thus, at the end of Step 3, we obtain 
\[\psi=\bigoplus_{0< x \leq A} (\rho \otimes S_1 \otimes S_{2x+1})^{\max(m_{\rho,x}-m_{\rho,x+1},0)},\]
and 
\[ \Omega= \sum_{0< x \leq A} ([x,-x]_{\rho})^{\max(m_{\rho,x}-m_{\rho,x+1},0)}.\]
Here in the direct sum and summation, $x$ is assumed to be in $\Z +A$.

Note that for $x \in A+\Z \setminus\{0\}$, the multiplicity of $\rho|\cdot|^x$ in $\Omega$ is given by
\[ \sum_{ |x| \leq y \leq X} \max(m_{\rho,y}-m_{\rho,y+1},0) \geq m_{\rho,|x|},\]
and hence $\Omega(\pi) \setminus \Omega= \{(\rho)^{\max(m_{\rho,0}-m_{\rho,1},0)}\}. $

In Step 4, suppose the contrary that there exists an $x$ such that $m_{\rho,x}< m_{\rho,x+1}$. Note that this implies $x+2\leq A+1$. Choose a maximal $x$ with this property. Then the multiplicities of $\rho|\cdot|^{ \pm (x+2)}$ (resp. $\rho|\cdot|^{ \pm (x+1)}$, $\rho|\cdot|^{\pm x}$) in $\Omega$ and $\Omega(\pi)$ are $m_{\rho,x+2}$ (resp. $m_{\rho,x+1}$, $m_{\rho,x+1}$) and $m_{\rho,x+2}$ (resp. $m_{\rho,x+1}$, $m_{\rho,x}$) respectively. In any case, we have
\[ M_{x+1}=0> -(m_{\rho,x+1}-m_{\rho,x}) =M_{x}, \]
and hence $\pi$ is not of Arthur type. This completes the verification of Condition (ii) and the proof of the theorem.
\end{proof}

The above theorem has several applications. First, it implies the Aubert-Zelevinsky dual of these unramified representations are tempered. We remark that this gives another proof of \cite[Theorem 0-5]{MT11}. Second, we have the following proposition. 

\begin{prop}\label{cor unram dual} 
Conjecture \ref{conj unram intro}(i) holds for $G_n.$ That is, if $\pi$ is unramified of Arthur type, then $\Psi(\pi)=\{\psi\}$ is a singleton, $\pi\in \Pi_{\phi_{\psi}}$, and $\psi$ is anti-generic. 
\end{prop}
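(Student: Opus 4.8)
The statement to be proved has three parts: that $\Psi(\pi)$ is a singleton, that $\pi$ lies in the $L$-packet $\Pi_{\phi_\psi}$, and that $\psi$ is anti-generic. The plan is to deduce all three from the explicit $L$-datum description in Theorem \ref{prop unram L-data}, the reduction to good parity in Corollary \ref{cor reduction from nu to gp}, and the tempered criterion Theorem \ref{thm tempered one A-packet}, the last being applied after passing to the Aubert--Zelevinsky dual.

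\emph{Reduction to good parity.} By Corollary \ref{cor reduction from nu to gp} write $\pi=\tau_{\psi_{nu,>0}}\times\tau_{\psi_{np}}\rtimes\pi_{gp}$ with $\pi_{gp}\in\Pi_{\psi_{gp}}$, $\psi_{gp}\in\Psi_{gp}(G_n)$; then $\pi$ is unramified if and only if each of the three factors is. A (shifted) Speh representation is unramified only if all of its segments have length one and all of its supercuspidal data are unramified characters; applying this to the Speh factors of $\tau_{\psi_{nu,>0}}$ and $\tau_{\psi_{np}}$ forces $a_i=1$ for every summand $\rho\otimes S_{a_i}\otimes S_{b_i}$ of $\psi_{nu,>0}$ and of $\psi_{np}$, i.e.\ these two subparameters have trivial restriction to the Deligne $\SL_2$. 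Since $\psi_{nu,>0}$ and $\psi_{np}$ are determined by the $L$-data of $\pi$, Corollary \ref{cor reduction from nu to gp} yields $|\Psi(\pi)|=|\Psi(\pi_{gp})|$, and the assertions $\pi\in\Pi_{\phi_\psi}$ and ``$\psi$ anti-generic'' follow from the corresponding statements for $\pi_{gp}$ (for the former using the analogue of Theorem \ref{thm red from nu to gp} for $L$-packets, which identifies $\tau_{\psi_{nu,>0}}$ and $\tau_{\psi_{np}}$ with the general-linear parts of $\Pi_{\phi_{\psi_{nu,>0}}}$ and $\Pi_{\phi_{\psi_{np}}}$ when the $a_i$ are all $1$).

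\emph{The good parity case.} By Theorem \ref{prop unram L-data} we have $\pi_{gp}=\pi(\EE)$ with $\EE$ as in \eqref{eq EE unramified}, all of whose rows have the form $([x,-x]_\rho,\lfloor x+\tfrac12\rfloor,1)$. Such a row corresponds to a summand $\rho\otimes S_a\otimes S_b$ with $(a,b)=(A+B+1,A-B+1)=(1,2x+1)$, so $\psi_\EE$ has trivial restriction to the Deligne $\SL_2$; equivalently $\widehat{\psi_\EE}$ is generic, i.e.\ $\psi_\EE$ is anti-generic. Moreover $\EE$ satisfies condition $(L)$ of Theorem \ref{thm (L)}, so $\pi(\EE)\in\Pi_{\phi_{\psi_\EE}}$ (this was already observed in the proof of Theorem \ref{prop unram L-data}). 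It remains to prove $\Psi(\pi_{gp})=\{\psi_\EE\}$. For this I would pass to the Aubert--Zelevinsky dual: the compatibility $\Pi_{\widehat\psi}=\{\widehat\pi\mid\pi\in\Pi_\psi\}$ induces a bijection $\psi'\mapsto\widehat{\psi'}$ from $\Psi(\pi_{gp})$ onto $\Psi(\widehat{\pi_{gp}})$, so it suffices to show $\widehat{\pi_{gp}}$ lies in a unique local Arthur packet. Now $\widehat{\pi_{gp}}=\pi(dual(\EE))$, and by Definition \ref{dual segment} every row of $dual(\EE)$ has a segment of length one, so $\widehat{\pi_{gp}}$ is tempered, say $\widehat{\pi_{gp}}=\pi(\phi',\varepsilon')$. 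By Theorem \ref{thm tempered one A-packet} one is then reduced to checking that $\varepsilon'(\rho\otimes S_a)\varepsilon'(\rho\otimes S_{a+2})=1$ whenever both $\rho\otimes S_a$ and $\rho\otimes S_{a+2}$ occur in $\phi'$, and that $\varepsilon'(\rho\otimes S_2)=1$ whenever $\rho\otimes S_2\subseteq\phi'$; these are verified by computing the signs $\eta_i'$ from the formula in Definition \ref{dual segment}, using that the rows of $\EE_\rho$ carry the common parameter $l_i=\lfloor x+\tfrac12\rfloor$ and the common sign $\eta_i=1$, so that the relevant $\alpha_i+\beta_i$ are either all of one fixed parity (giving the first condition) or forced even (giving the second).

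\emph{Main obstacle.} The genuinely delicate point is the last step --- the sign bookkeeping needed to verify the hypotheses of Theorem \ref{thm tempered one A-packet} for $\widehat{\pi_{gp}}$, equivalently the exclusion of the ``dual-type'' operators $dual_i^{\pm}$ and $dual\circ ui_{i,j}\circ dual$ on $\EE$. The subtle case is that of rows of $\EE$ with $B_i=-\tfrac12$, equivalently the summands $\rho\otimes S_2$ of $\phi'$: there $l_i$ attains the maximal value $b_i/2$, so the sign $\eta_i=1$ recorded in \eqref{eq EE unramified} is not an invariant of the extended segment, and the duality formula must be applied with the canonical normalization of Definition \ref{dual segment}(3); once this is done one finds $\eta_i'=+1$, so $dual_i^-$ is inapplicable and $\varepsilon'(\rho\otimes S_2)=1$. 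Alternatively, one can argue directly on $\EE$ via Lemma \ref{lem operator}: $ui_{i,j}$ is never applicable because $[x_i,-x_i]_\rho$ and $[x_j,-x_j]_\rho$ are nested and hence unlinked; $dual_i^+$ is never applicable because $B_i\le 0$ and $\eta_i=+1$; and $dual\circ ui_{i,j}\circ dual$ and $dual_i^-$ require exactly the sign analysis described above --- in either route the crux is the $B_i=-\tfrac12$ rows.
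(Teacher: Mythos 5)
Your proof is correct and follows essentially the same route as the paper: reduce to good parity via Corollary~\ref{cor reduction from nu to gp}, write $\pi_{gp}=\pi(\EE)$ using Theorem~\ref{prop unram L-data}, dualize to land in the tempered situation, and apply the uniqueness criterion of Theorem~\ref{thm tempered one A-packet}. The only cosmetic differences are that the paper deduces the triviality of the Deligne-$\SL_2$ action on $\psi_{nu,>0}$ and $\psi_{np}$ from the decomposition of $\phi_\pi$ into one-dimensionals rather than from unramifiedness of Speh factors, and that it states (without spelling out) the sign computation for $dual(\EE)$ that you correctly flag as requiring the normalization convention in Definition~\ref{dual segment}(3) for the $B_i=-\tfrac12$ rows.
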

\begin{proof}
Suppose $\pi$ is unramified of Arthur type. Then write $\pi= \tau_{nu,>0}\times \tau_{np} \rtimes \pi_{gp}$ as in Theorem \ref{thm red from nu to gp}. Since $\phi_{\pi}$ decomposes as a direct sum of one-dimensional representations, so does $\phi_{\tau_{nu,>0}}= \phi_{\psi_{nu,>0}}$ and $\phi_{\tau_{np}}= \phi_{\psi_{np}}$. This shows that the restrictions of $\psi_{nu,>0}$ and $\psi_{np}$ to Deligne-$\SL_2(\BC)$ are trivial. Thus by Corollary \ref{cor reduction from nu to gp}, it suffices to show $\Psi(\pi_{gp})$ is a singleton consisting of an anti-tempered local Arthur parameter.

Now we apply Theorem \ref{prop unram L-data} to $\pi_{gp}$. Take $\EE$ as in \eqref{eq EE unramified} with $\pi(\EE)=\pi_{gp}$. We have
\[dual(\EE)= \cup_{\rho} \{([x,x]_{\rho},0,\varepsilon_{\rho})^{m_{\rho,x-1}-m_{\rho,x}}\}_{x \in \half{1}\Z},\]
where if $\rho \otimes S_2$ is of the same parity as $\widehat{G}_n$ and $\varepsilon_{\rho}=-1$ otherwise. Note that 
\[ \psi_{dual(\EE)}= \bigoplus_{\rho} \bigoplus_{x \in \half{1}\Z} (\rho \otimes S_{2x+1}\otimes S_1)^{\oplus m_{\rho,x-1}-m_{\rho,x}}, \]
which is tempered, and hence $\widehat{\pi}_{gp}=\widehat{\pi(\EE)}=\pi(dual(\EE))$ is tempered. Moreover, we have $\widehat{\pi}_{gp}= \pi(\phi_{\psi_{dual(\EE)}}, \varepsilon) $, where $\varepsilon(\rho \otimes S_{2x+1})= \varepsilon_{\rho}$. In particular, $\widehat{\pi}_{gp}$ satisfies all of the conditions of Theorem \ref{thm tempered one A-packet}. Hence $\Psi(\widehat{\pi(\EE)})= \{\psi_{dual(\EE)}\}$. This implies that $\Psi(\pi_{gp})=\{\psi_{\EE}\}$, which completes the proof of the proposition.
\end{proof}

\begin{remark}\ 
\begin{enumerate}
    \item [(1)] The most crucial condition we used for an unramified representation $\pi$ is that $\phi_{\pi}$ decomposes as a direct sum of one-dimensional representations. In fact, if $\pi$ is of Arthur type and of good parity and has this property, then the arguments in Theorem \ref{prop unram L-data}(ii) also work. We also obtain $\pi= \pi(\EE)$ with $\EE$ of the form \eqref{eq EE unramified} with possibly some different $\eta_i$ since Part (iii) of the proposition may not hold. However, $\pi(\EE)$ still lies in $\Pi_{\phi_{\psi}}$ for some anti-tempered $\psi$ by \cite[Theorem 9.5]{HLL22}. Then the same arguments in Proposition \ref{cor unram dual} work, and we conclude that $\Psi(\pi)=\{\psi\}$. This can be generalized to an arbitrary representation of Arthur type whose $L$-parameter decomposes as a direct sum of one-dimensional representations.  
    \item [(2)]The Aubert-Zelevinsky dual of an unramified representation $\pi$ of Arthur type may not be always generic. Indeed, suppose $\pi \in \Pi_{\psi}$. Then $\widehat{\pi}$ is generic (with respect to the fixed Whittaker datum) if and only if the character $\varepsilon_{\psi}^{MW/W}$ defined in \cite[Definition 8.1]{Xu17b} is trivial. See \cite[Theorem 1.4]{LLS24}. 
\end{enumerate}
\end{remark}

As another application, we prove the following corollary, which directly implies Part (ii) of Conjecture \ref{conj unram intro} for groups $\mathrm{G}_n$. 

\begin{cor}\label{cor unram at most one}
For any $\psi \in \Psi^+(G_n)$, $\Pi_{\psi}$ contains at most one unramified representation.
\end{cor}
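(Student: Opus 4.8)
The plan is to deduce this immediately from Proposition \ref{cor unram dual}, combined with the elementary fact that an $L$-packet of $G_n$ contains at most one unramified representation. So I would begin by taking two unramified representations $\pi,\pi'\in\Pi_\psi$; since $\psi\in\Psi^+(G_n)$, both $\pi$ and $\pi'$ are of Arthur type, so Proposition \ref{cor unram dual} applies and produces, for $\pi$, a unique parameter $\psi_\pi$ with $\pi\in\Pi_{\psi_\pi}$, together with the assertion $\pi\in\Pi_{\phi_{\psi_\pi}}$; likewise for $\pi'$ one gets a unique $\psi_{\pi'}$ with $\pi'\in\Pi_{\psi_{\pi'}}$ and $\pi'\in\Pi_{\phi_{\psi_{\pi'}}}$. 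Because $\pi,\pi'\in\Pi_\psi$, the uniqueness forces $\psi_\pi=\psi_{\pi'}=\psi$, and hence both $\pi$ and $\pi'$ lie in the single $L$-packet $\Pi_{\phi_\psi}$; in particular they have the same $L$-parameter $\phi_\psi$.

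The second step is that two unramified irreducible representations sharing the same $L$-parameter must be isomorphic: by the Satake isomorphism the isomorphism class of an unramified irreducible representation is pinned down by its Satake parameter, which is $\phi_\psi(\mathrm{Frob})$ up to conjugacy and is therefore determined by $\phi_\psi$ once one knows that Arthur's local Langlands correspondence for $G_n$ is compatible with the unramified correspondence — a compatibility already invoked in the proof of Theorem \ref{prop unram L-data}. Hence $\pi\cong\pi'$, and $\Pi_\psi$ contains at most one unramified representation.

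I do not expect a genuine obstacle here: the substantive work is already in Theorem \ref{prop unram L-data} and Proposition \ref{cor unram dual}. The one point worth stressing is that, unlike tempered $L$-packets (Theorem \ref{thm Arthur tempered}), distinct local Arthur packets can overlap, so the naive slogan ``the unramified member is the base point of $\Pi_\psi$'' does not by itself close the argument; what makes it work is precisely the statement of Proposition \ref{cor unram dual} that an unramified representation of Arthur type lies in a \emph{unique} packet and moreover in the $L$-packet $\Pi_{\phi_\psi}$, which collapses the count to the trivial observation about $L$-packets above. An alternative route, which avoids invoking Satake in the last step, would be to reduce to good parity via Theorem \ref{thm red from nu to gp}, write any unramified member of $\Pi_{\psi_{gp}}$ as $\pi(\EE)$ with $\EE$ of the shape \eqref{eq EE unramified}, observe that $\psi_{\EE}$ must equal $\psi_{gp}$, and recover the multiplicities $m_{\rho,x}$ — hence $\EE$, hence $\pi(\EE)$ — from $\psi_{gp}$; either way the corollary follows.
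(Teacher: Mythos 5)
Your proposal is correct, and the ``alternative route'' you sketch at the end is precisely the paper's argument: reduce to good parity via Theorem \ref{thm red from nu to gp}, write each unramified $\pi_i\in\Pi_\psi$ as $\pi(\EE_i)$ with $\EE_i$ in the canonical form \eqref{eq EE unramified} (Theorem \ref{prop unram L-data}), invoke Proposition \ref{cor unram dual} to conclude $\psi=\psi_{\EE_1}=\psi_{\EE_2}$, and observe that $\psi$ determines the multiplicities $m_{\rho,x}$ and hence $\EE_1=\EE_2$. Your main route is a harmless variant that reaches the same conclusion without reducing to good parity: after Proposition \ref{cor unram dual} places both $\pi$ and $\pi'$ in the single $L$-packet $\Pi_{\phi_\psi}$, you finish by the fact that an $L$-packet contains at most one unramified member. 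The only caveat is that you should be slightly careful phrasing the Satake step — the Satake parameter is $\phi_\psi(\mathrm{Frob},\mathrm{diag}(q^{1/2},q^{-1/2}))$, not just $\phi_\psi(\mathrm{Frob})$ — but the underlying compatibility (Arthur's Theorem 1.5.1(a) together with the injectivity of $\Pi_\psi\to\widehat{\mathcal{S}}_\psi$, cited as Atobe's Theorem 3.6 in the proof of Theorem \ref{prop unram L-data}) is exactly what the paper already relies on, so the argument closes either way.
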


\begin{proof}
By Theorem \ref{thm red from nu to gp}, we may assume $\psi$ is of good parity. Let $\pi_1, \pi_2$ be unramified representations in $\Pi_{\psi}$. Theorem \ref{prop unram L-data} gives $\pi_i = \pi(\EE_i)$ where $\EE_i$ is given by \eqref{eq EE unramified}. Then Proposition \ref{cor unram dual} implies that $\psi= \psi_{\EE_1}= \psi_{\EE_2}$, and hence $\EE_1=\EE_2$. We conclude that $\pi_1=\pi_2$, which proves the corollary.
\end{proof}

\section{Global application}\label{sec global}
In this section, we discuss the global applications of the results in \S \ref{section enhanced Shahidi conjecture}, \ref{sec unram classification}. In particular, we prove Propositions \ref{thm partial Clozel 4 intro}, \ref{thm stronger condition intro} (Propositions \ref{thm partial Clozel 4}, \ref{thm stronger condition} below), and consider a sequence of Clozel's conjectures (Conjectures \ref{conj Clozel 5 intro}, \ref{conj Clozel 2 intro}). These results may be known to experts, but we record them here for the convenience of future references.

\subsection{Global Arthur packets}\label{sec global Arthur packets}

In this subsection, we recall the setting of global Arthur packets for $\mathrm{G}_n$ (\cite[\S 1.5]{Art13}). Then, as an application of the enhanced Shahidi conjecture (Theorems \ref{proof of enh sha conj}, \ref{thm non-unitary Enhanced Shahidi}), we prove Proposition \ref{thm partial Clozel 4 intro}, in particular, for any automorphic representation $\pi \in \Pi_{\psi}$, we show that if $\pi_v$ is generic at one finite place, then $\pi_v$ is generic for almost all finite places 
(see Proposition \ref{thm partial Clozel 4} below). 

Let $k$ be a number field with ring of adeles $\mathbb{A}_k$. Following \cite[\S1.4]{Art13}, a global Arthur parameter $\psi$ is a finite sum of irreducible representations $\mu\otimes S_b$ where $\mu$ is an irreducible cuspidal representation of a general linear group over $\mathbb{A}_k.$ That is, $\psi=\boxplus_{i=1}^m\mu_i\otimes S_{b_i}$ where $\mu_i$ are irreducible cuspidal representations of $\GL_{n_i}(\mathbb{A}_k).$ Let $v$ be a finite place of $k.$ Then 
\[(\mu_i)_v=\times_{j\in I_{\mu_i}}\St(\tau_j,a_j).\]
 Here, $I_{\mu_i}$ is a finite indexing set, $\tau_j$ is an irreducible supercuspidal representation (not necessarily unitary) of a general linear group over $k_v,$ $a_i$ is a positive integer, and $\St(\tau_i,a_i)$ is the unique irreducible subrepresentation of 
 \[\tau_i|\cdot|^{\frac{a_i-1}{2}}\times\cdots\times\tau_i|\cdot|^{\frac{1-a_i}{2}}.\]
 We identify $(\mu_i)_v$ with the representation $\oplus_{j\in I_{\mu_i}}\tau_j\otimes S_{a_j}$ of $W_{k_v} \times \SL_2(\BC)$. By an approximation of the Ramanujan conjecture, we may assume that $\tau_j=\tau_j'|\cdot|^x$ where $\tau_j'$ is an irreducible unitary supercuspidal representation of a general linear group over $k_v$ and $\frac{-1}{2}<x<\frac{1}{2}.$ Thus, for any finite place $v$, we associate to $\psi$ a local Arthur parameter by 
 \begin{align}\label{eq global A-par localize}
     \psi_v=\bigoplus_{i=1}^m\left( \bigoplus_{j\in I_{\mu_i}}\tau_j\otimes S_{a_j}\right)\otimes S_{b_i}.
 \end{align} 
Say $\psi$ is a global Arthur parameter of $\mathrm{G}_n(\A_k)$, each local Arthur parameter $\psi_v$ factors through $\widehat{\mathrm{G}}_n(\BC)$ and hence $\psi_v\in\Psi^+(\mathrm{G}_n(k_v))$.

 Arthur constructed global Arthur packets $\Pi_\psi$ in \cite[\S1.5]{Art13}. According to the construction, if $\pi\in\Pi_\psi$ with $\pi=\otimes_v\pi_v$, then $\pi_v\in \Pi_{\psi_v}$ for all places $v$. Moreover, the character associated to $\pi_v$ in $\widehat{\mathcal{S}}_{\psi_{v}}$ is trivial for almost all $v$.

\begin{cor}\label{cor global app unram}
    Suppose that $\pi=\otimes\pi_v$ lies in a global Arthur packet $\Pi_\psi$ of $\mathrm{G}_n$. There exists a finite set of places  $\mathcal{S}$, containing the Archimedean places, such that for $v\not\in\mathcal{S},$ $\pi_v$ is unramified and $\pi_v\in\Pi_{\phi_{\psi_v}}.$
\end{cor}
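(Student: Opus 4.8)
The plan is to build the set $\mathcal{S}$ out of three finite sets of places, outside of which (and outside the Archimedean places) the following hold simultaneously: $\pi_v$ is unramified, the localization $\psi_v$ is an unramified parameter, and the character $\langle\cdot,\pi_v\rangle\in\widehat{\mathcal{S}}_{\psi_v}$ attached to $\pi_v$ is trivial. The conclusion will then follow from Corollary~\ref{cor unram at most one}.

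First I would recall the standard fact that, $\pi=\otimes_v\pi_v$ being an irreducible automorphic representation of $\mathrm{G}_n(\A_k)$ and $\mathrm{G}_n$ being split (hence unramified at almost all finite places), there is a finite set of finite places $\mathcal{S}_1$ such that $\pi_v$ is unramified for $v\notin\mathcal{S}_1$. Next, each $\mu_i$ occurring in $\psi=\boxplus_{i=1}^m\mu_i\otimes S_{b_i}$ is an irreducible cuspidal automorphic representation of a general linear group, hence unramified outside a finite set; let $\mathcal{S}_2$ be the union of these finite sets over $i$. For $v\notin\mathcal{S}_2$ the component $(\mu_i)_v$ is an unramified (full) principal series, so in the notation of \eqref{eq global A-par localize} every $\tau_j$ is an unramified character and every $a_j=1$; hence $\psi_v=\bigoplus_i\big(\bigoplus_j\chi_{i,j}\big)\otimes S_{b_i}$ with $\chi_{i,j}$ unramified characters, and in particular $\phi_{\psi_v}$ is an unramified $L$-parameter. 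Finally, by Arthur's construction of global packets recalled above, the character $\langle\cdot,\pi_v\rangle\in\widehat{\mathcal{S}}_{\psi_v}$ is trivial for all but finitely many $v$; let $\mathcal{S}_3$ be the exceptional set.

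Now set $\mathcal{S}=\mathcal{S}_1\cup\mathcal{S}_2\cup\mathcal{S}_3$ together with the Archimedean places, a finite set containing all Archimedean places. Fix $v\notin\mathcal{S}$. Since $\phi_{\psi_v}$ is unramified, the Satake isomorphism (\cite{Sat63}) attaches to it an unramified representation $\pi_v^{0}$ of $G_n$; by definition $\pi_v^{0}\in\Pi_{\phi_{\psi_v}}$, and by \cite[Theorem 1.5.1(a)]{Art13} (used exactly as in the proof of Theorem~\ref{prop unram L-data}) we also have $\pi_v^{0}\in\Pi_{\psi_v}$. On the other hand $\pi_v\in\Pi_{\psi_v}$ and $\pi_v$ is unramified. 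Thus $\Pi_{\psi_v}$ contains two unramified representations $\pi_v$ and $\pi_v^{0}$, so Corollary~\ref{cor unram at most one} forces $\pi_v=\pi_v^{0}$; in particular $\pi_v$ is unramified and lies in $\Pi_{\phi_{\psi_v}}$.

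The only step that is not routine automorphic bookkeeping is the last one. The packet map $\Pi_{\psi_v}\to\widehat{\mathcal{S}}_{\psi_v}$ is in general neither injective nor surjective, so knowing that $\langle\cdot,\pi_v\rangle$ is trivial does not by itself single out $\pi_v$; it is precisely the uniqueness of unramified members in a local Arthur packet, Corollary~\ref{cor unram at most one}, that identifies $\pi_v$ with the Satake representation. The remaining points --- the finiteness of $\mathcal{S}_1,\mathcal{S}_2,\mathcal{S}_3$ and the fact that $\phi_{\psi_v}$ is unramified for $v\notin\mathcal{S}$ --- are standard.
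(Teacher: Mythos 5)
Your proof is correct, but it takes a genuinely different route from the paper at the decisive step. Both arguments produce the unramified representation $\pi_v^{0}\in\Pi_{\phi_{\psi_v}}$ via Satake and place it inside $\Pi_{\psi_v}$ by \cite[Theorem 1.5.1(a)]{Art13}; they part ways in how they identify it with $\pi_v$. The paper observes that for $v\notin\mathcal{S}$ the localization $\psi_v$ is anti-generic, so the packet map $\Pi_{\psi_v}\to\widehat{\mathcal{S}}_{\psi_v}$ is a \emph{bijection} (this is special to the anti-tempered case, by Aubert duality from Theorem \ref{thm Arthur tempered}, or by \cite[Lemma 7.1.1]{Art13}); since $\pi_v^{0}$ and $\pi_v$ both correspond to the trivial character, they must coincide. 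You instead bypass the bijection entirely: you take as input the standard fact that $\pi_v$ is unramified for almost all $v$ (from the restricted-tensor-product structure of the global packet), and then invoke Corollary \ref{cor unram at most one}, i.e., that a local Arthur packet contains at most one unramified member, to force $\pi_v=\pi_v^{0}$. What each approach buys: the paper's route derives the unramified-ness of $\pi_v$ as part of the conclusion rather than assuming it, whereas your route avoids invoking the (less elementary) bijectivity property of anti-generic packets and uses only the uniqueness result proved in \S\ref{sec unram classification}. One minor remark: the set $\mathcal{S}_3$ you introduce, controlling where $\langle\cdot,\pi_v\rangle$ fails to be trivial, plays no role in your final identification and could be dropped; and your closing comment that the packet map is ``in general neither injective nor surjective'' somewhat understates the paper's argument, since for the anti-generic $\psi_v$ arising here the map is in fact a bijection.
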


\begin{proof}
Let $\mathcal{S}$ be a finite set of places containing the Archimedean places such that for $v\not\in\mathcal{S},$ $(\mu_i)_v$ is unramified for any $1 \leq i \leq m$, and the character of $\pi_v$ in $\widehat{\mathcal{S}}_{\psi_v}$ is trivial. Then for $v \not\in \mathcal{S}$, $\psi_{v}$ is anti-generic and $\phi_{\psi_v}$ is unramified. This implies the $L$-packet $\Pi_{\phi_{\psi_v}}$ contains an unramified representation $\pi^{ur}_v$. By \cite[Theorem 1.5.1(a)]{Art13}, $\pi^{ur}_v\in \Pi_{\psi_v}$ corresponds to the trivial character in $\widehat{\mathcal{S}}_{\psi_v}$, and hence $\pi^{ur}_v= \pi_{v}$ since the map $\Pi_{\psi_v} \to \widehat{\mathcal{S}}_{\psi_v}$ is a bijection (by construction of $\Pi_{\psi_v}$ using extended multi-segments or \cite[Lemma 7.1.1]{Art13}). This completes the proof of the corollary.
\end{proof}

Now we prove the following application of the enhanced Shahidi conjecture (Theorems \ref{proof of enh sha conj}, \ref{thm non-unitary Enhanced Shahidi}).

\begin{prop}\label{thm partial Clozel 4}
    Suppose that $\pi=\otimes\pi_v$ lies in a global Arthur packet $\Pi_\psi$ of $\mathrm{G}_n$ and that there exists a finite place $v_0$ such that $\pi_{v_0}$ is generic. Then the following holds:
    \begin{enumerate}
        \item The global Arthur parameter is of the form $\psi=\boxplus_{i=1}^m \mu_i\otimes S_1,$ where $\mu_i$ are irreducible cuspidal representations of general linear groups.
        \item For any place $v,$  $\psi_v$ is generic. In particular, $\pi_{v}$ is generic for almost all finite places $v$.
        \item Furthermore, for almost all places $v$, the  local Arthur parameter $\psi_v$ has trivial restrictions to both the Deligne-$\SL_2(\mathbb{C})$ and Arthur-$\SL_2(\mathbb{C}).$
    \end{enumerate}
\end{prop}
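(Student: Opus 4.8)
The plan is to propagate the genericity of $\pi_{v_0}$ to the global parameter $\psi$ via Arthur's construction of global packets, invoke the enhanced Shahidi conjecture (Theorem \ref{thm non-unitary Enhanced Shahidi}), and then read off the local consequences at every place from Arthur's localization formula \eqref{eq global A-par localize}.

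For Part (1), I would start from the fact that $\pi\in\Pi_\psi$ forces $\pi_{v_0}\in\Pi_{\psi_{v_0}}$, where $\psi_{v_0}\in\Psi^+(\mathrm{G}_n(k_{v_0}))$ is the localization given by \eqref{eq global A-par localize}. Since $\pi_{v_0}$ is generic, Theorem \ref{thm non-unitary Enhanced Shahidi}(i) tells us that $\psi_{v_0}$ is a generic parameter, that is, $\psi_{v_0}|_{\SL_2^A(\BC)}$ is trivial. Writing $\psi=\boxplus_{i=1}^m\mu_i\otimes S_{b_i}$, the restriction of $\psi_{v_0}$ to $\SL_2^A(\BC)$ computed through \eqref{eq global A-par localize} contains $S_{b_i}$ with positive multiplicity for each $i$ (because $I_{\mu_i}\neq\emptyset$), so this restriction is trivial precisely when $b_i=1$ for all $i$. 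Hence $\psi=\boxplus_{i=1}^m\mu_i\otimes S_1$, which is Part (1).

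For Parts (2) and (3), I would feed $\psi=\boxplus_{i=1}^m\mu_i\otimes S_1$ back into \eqref{eq global A-par localize}: at \emph{every} place $v$ we obtain $\psi_v=\bigoplus_{i}\bigoplus_{j\in I_{\mu_i}}\tau_j\otimes S_{a_j}\otimes S_1$, which restricts trivially to $\SL_2^A(\BC)$, so $\psi_v$ is generic --- this is the first assertion of (2). Now enlarge the finite set $\mathcal{S}$ of places from Corollary \ref{cor global app unram}, if necessary, so that in addition every $(\mu_i)_v$ is unramified for $v\notin\mathcal{S}$. For such $v$, the $L$-parameter of the unramified representation $(\mu_i)_v$ is a direct sum of unramified characters of $W_{k_v}$ (it has no monodromy), so in \eqref{eq global A-par localize} we get $a_j=1$ and $\tau_j$ an unramified character for all $j$; therefore $\psi_v$ restricts trivially to both copies of $\SL_2(\BC)$, which is Part (3). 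In particular $\phi_{\psi_v}=\psi_v$, and by Corollary \ref{cor global app unram} the representation $\pi_v$ is the unramified member of the $L$-packet $\Pi_{\phi_{\psi_v}}$ of this generic parameter, so $\pi_v$ corresponds to the trivial character of $\widehat{\mathcal{S}}_{\psi_v}$; by the Shahidi conjecture for $G_n$ (Theorem \ref{thm Arthur generic}) together with the classification of generic representations (Theorem \ref{thm generic sequence}), $\pi_v$ is generic with respect to the fixed Whittaker datum. Hence $\pi_v$ is generic for all $v\notin\mathcal{S}$, which finishes Part (2).

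Because the deep ingredient --- the enhanced Shahidi conjecture --- is already in hand, the remaining argument is largely bookkeeping, and the main thing to watch is keeping the Deligne and Arthur copies of $\SL_2(\BC)$ separate in \eqref{eq global A-par localize}. The one step that is not purely formal is the last one: the unramified constituent of a generic but possibly non-tempered local $L$-packet of $G_n$ is itself generic. This is the easy direction of the standard module conjecture, which for $G_n$ follows from Theorems \ref{thm generic sequence} and \ref{thm Arthur generic} via the Langlands classification; if one instead assumes the Ramanujan conjecture for $\GL_n$ (as in Proposition \ref{thm stronger condition}), then $\phi_{\psi_v}$ is tempered for these $v$ and Theorem \ref{thm Arthur generic} applies directly.
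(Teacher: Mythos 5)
Your proposal is correct and follows essentially the same route as the paper: the enhanced Shahidi theorem at $v_0$ forces $\psi_{v_0}$ (hence every $b_i$) to be generic, the localization formula \eqref{eq global A-par localize} then gives genericity of each $\psi_v$ and triviality of both $\SL_2$'s at unramified places, and the trivial character at almost all places together with Theorems \ref{thm Arthur generic} and \ref{thm generic sequence} yields genericity of $\pi_v$ there. The only cosmetic differences are that you invoke only Theorem \ref{thm non-unitary Enhanced Shahidi}(i) where the paper also cites (ii), and you route the almost-all-places genericity through Corollary \ref{cor global app unram} at unramified places rather than directly through the triviality of the characters; both are fine, since the irreducibility of the inductions defining $\Pi_{\psi_v}$ (Theorem \ref{thm red from nu to gp}) supplies the standard-module step you flag.
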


\begin{proof}
    Since $\pi_{v_0}$ is generic of Arthur type, Theorem \ref{thm non-unitary Enhanced Shahidi}(ii) implies $\Psi(\pi_{v_0})$ is a singleton, and hence $ \Psi(\pi_{v_0})=\{ \psi_{v_0}\}$. Then, Theorem \ref{thm non-unitary Enhanced Shahidi}(i) implies $\psi_{v_0}$ is generic, and hence it is of the form
     $$
    \psi_{v_0}= \bigoplus_{\rho}\bigoplus_{i \in I_{\rho}}  \rho \otimes S_{a_i} \otimes S_{1},
    $$
    where $\rho$ is a supercuspidal representation of $\GL(k_v)$. Comparing with \eqref{eq global A-par localize}, we obtain $\psi=\boxplus_{i=1}^m \mu_i\otimes S_1.$ This proves Part (1).

    By Part (1), for each finite place $v$, we have
     \[\psi_v= \bigoplus_{i=1}^m (\mu_i)_v \otimes S_1.\]
     This proves the first statement in Part (2). Since for almost all finite places, the character of $\pi_v$ in $\widehat{\mathcal{S}}_{\psi_{v}}$ is trivial, $\pi_v$ is generic by Theorems \ref{thm Arthur generic}, \ref{thm generic sequence}. 
     This proves the second statement of Part (2). 
     For almost all places $v$, $(\mu_i)_v$ are unramified for all $1 \leq i \leq m$, and hence the restriction of $(\mu_i)_{v}$ to the Deligne-$\SL_2(\BC)$ is trivial for these places. This shows Part (3) and completes the proof of the proposition. 
\end{proof}

We expect that Part (2) of Proposition \ref{thm partial Clozel 4} is true for general quasi-split reductive groups, see Conjecture \ref{conj generic almost all intro}.

\subsection{Clozel's conjectures}\label{sec Clozel}
In this subsection, we consider the list of Clozel's conjectures in \cite{Clo07} on understanding the local components of automorphic representations in the discrete spectrum of reductive groups. As applications of our results in \S \ref{sec unram classification} on the characterization of unramified representations of Arthur type, for groups $\mathrm{G}_n$, we show that the Ramanujan conjecture of $\GL_n$ implies Conjecture \ref{conj Clozel 2 intro} (see Proposition \ref{thm conj Clozel 2} below). We also prove Proposition \ref{thm stronger condition intro} (see Proposition \ref{thm stronger condition} below). 

Let $k$ be a number field and $\mathrm{G}$ be a reductive group defined over $k$. Let
\[\mathcal{A}_{\mathrm{G}, disc} = L^2_{disc}(\mathrm{G}(k)\backslash \mathrm{G}(\A_{k}), \omega),\]
where $\omega$ is a unitary character of  
$Z(k) \backslash Z(\A_k)$, $Z$ is the center of $\mathrm{G}$.  
First, we recall Arthur's classification for the discrete automorphic spectrum for quasi-split classical groups.

\begin{thm}[{\cite[Theorem 1.5.2]{Art13}}]\label{thm global Arthur}
   If $\mathrm{G}$ is a symplectic group or quasi-split special orthogonal group, then 
   \[L^2_{disc}(\mathrm{G}(k)\backslash \mathrm{G}(\A_{k})) = \bigoplus_{\psi} \bigoplus_{\pi \in \Pi_{\psi}} m_{\pi}\pi,\]
   where $\psi$ runs through all global Arthur parameters of $\mathrm{G}(\A_k)$, $\Pi_{\psi}$ is the global Arthur packets, and $m_{\pi}$ is the multiplicity. 
\end{thm}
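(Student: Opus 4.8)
The statement is Arthur's main global classification, so the plan is to reproduce the strategy of \cite{Art13}: an intertwined local--global induction driven by the comparison of trace formulas. The starting input is the stabilization of the Arthur--Selberg trace formula for $\mathrm{G}$ and of the $\widetilde{\theta}$-twisted trace formula for $\mathrm{GL}_N$, where $\widetilde{\theta}$ is the outer automorphism chosen so that the elliptic twisted endoscopic data of $\mathrm{GL}_N$ are precisely the quasi-split symplectic and orthogonal groups together with products thereof. This requires the transfer of orbital integrals and the (weighted) fundamental lemma --- now theorems of Waldspurger, Ng\^o, and Chaudouard--Laumon --- as well as Arthur's stabilization of the full (twisted) trace formula. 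Combined with the classification of the discrete spectrum of $\mathrm{GL}_N$ due to M{\oe}glin--Waldspurger and the local Langlands correspondence for $\mathrm{GL}_N$, one obtains on the spectral side of $\mathrm{GL}_N$ a decomposition indexed by the global Arthur parameters $\psi = \boxplus_i \mu_i \otimes S_{b_i}$.

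Next I would run the induction on $N$ via Arthur's ``Standard Model''. At each stage this simultaneously produces: (i) the local packets $\Pi_{\psi_v}$ for every place $v$, together with the pairing $\langle \cdot, \pi_v \rangle$ with $\widehat{\mathcal{S}}_{\psi_v}$ recalled in \eqref{eq character of Arthur packet}; (ii) the \emph{stable multiplicity formula}, expressing the stable distribution $S^{\mathrm{G}}_{\mathrm{disc}}$ as an explicit sum over $\psi$ with coefficients involving $|\mathcal{S}_\psi|^{-1}$ and Arthur's sign character $\epsilon_\psi$; and (iii) the comparison of spectral expansions. Feeding (i)--(iii) into the trace-formula identity and using linear independence of characters on the two sides isolates, for each global parameter $\psi$, the contribution of the global packet $\Pi_\psi = \{ \otimes_v \pi_v : \pi_v \in \Pi_{\psi_v}, \ \langle\cdot,\pi_v\rangle = 1 \text{ for almost all } v \}$; the multiplicity of $\pi = \otimes_v \pi_v$ comes out as $m_\pi = |\mathcal{S}_\psi|^{-1} \sum_{x \in \mathcal{S}_\psi} \epsilon_\psi(x) \prod_v \langle x, \pi_v \rangle$, which is $0$ or $1$. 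Summing over $\psi$ yields the asserted orthogonal decomposition of $L^2_{\mathrm{disc}}(\mathrm{G}(k) \backslash \mathrm{G}(\A_k))$.

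The point to stress is that this is not a consequence of anything recalled earlier in the paper --- it is the deep input on which all of \S\S\ref{section enhanced Shahidi conjecture}--\ref{sec global} rests --- and its proof is the content of the entire monograph \cite{Art13}. The principal obstacles are: the impossibility of separating the local construction of the packets $\Pi_{\psi_v}$ from the global comparison (they must be built together by induction); the control of all non-discrete geometric and spectral terms in the trace formula (cancellation of singularities, vanishing of the supplementary terms); and the descent from the $\widetilde{\theta}$-twisted $\mathrm{GL}_N$ side --- where rigidity and multiplicity one (Jacquet--Shalika, M{\oe}glin--Waldspurger) are available --- back to $\mathrm{G}$, where both the multiplicity and the shape of the packets are a priori unknown. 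Accordingly, in this paper we simply invoke \cite[Theorem 1.5.2]{Art13}; our own contribution is the refined study of the local packets $\Pi_{\psi_v}$ carried out above.
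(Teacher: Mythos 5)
You correctly recognize that Theorem \ref{thm global Arthur} is not proved in this paper: it is stated as a citation of \cite[Theorem 1.5.2]{Art13} and used as foundational input, exactly as you conclude in your final paragraph. Your sketch of Arthur's strategy (stabilization of the ordinary and $\widetilde{\theta}$-twisted trace formulas, endoscopic transfer and the fundamental lemma, the M{\oe}glin--Waldspurger classification for $\mathrm{GL}_N$ feeding the parameters $\psi=\boxplus_i\mu_i\otimes S_{b_i}$, the local--global induction producing the packets and the stable multiplicity formula, and the resulting multiplicity $m_\pi=|\mathcal{S}_\psi|^{-1}\sum_{x}\epsilon_\psi(x)\prod_v\langle x,\pi_v\rangle$) is accurate as a summary of the monograph, but none of it appears or needs to appear in this paper, which simply invokes the result.
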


Next, we recall the Ramanujan conjecture for $\GL_n$.
\begin{conj}\label{conj Ramanujan}
    If $\pi$ is a cuspidal automorphic representation of $\GL_n(\A_k)$, then $\pi_{v}$ is tempered for any place $v$.
\end{conj}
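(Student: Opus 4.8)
The assertion is the Ramanujan--Petersson conjecture for $\GL_n$, one of the central open problems in the Langlands program; no unconditional proof is known, and in this paper it enters only as a hypothesis (in Propositions \ref{thm stronger condition} and \ref{thm conj Clozel 2}). Nonetheless, here is the route I would try to follow. The plan is to attach to a cuspidal automorphic representation $\pi$ of $\GL_n(\A_k)$ a compatible system of $\ell$-adic Galois representations realizing its Satake parameters, and then to read off temperedness of each $\pi_v$ from purity of this system, i.e.\ from the Weil conjectures over finite fields.

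First I would reduce to the regular algebraic (cohomological) case, where at a good place $v$ the Satake parameter of $\pi_v$ appears among the Frobenius eigenvalues on the $\ell$-adic cohomology of a suitable Shimura variety. The prototype is Deligne's treatment of holomorphic cusp forms on $\GL_2/\Q$ via Kuga--Sato varieties: there the bound $|a_p|\le 2p^{(k-1)/2}$ is precisely temperedness of $\pi_p$, and it follows from the Riemann hypothesis over $\mathbb{F}_p$. For $\pi$ conjugate self-dual and cohomological over a CM or totally real field, this program has been executed using Shimura varieties of unitary type, the essential input being that the associated Galois representation is pure. The hard part --- in fact the reason the conjecture remains open --- is that outside this range (non-self-dual $\pi$, arbitrary number fields, or non-cohomological $\pi$ such as Maass forms) there is at present no geometric construction of the relevant Galois representations; the strongest unconditional information there consists of the nontrivial bounds of Luo--Rudnick--Sarnak and their subsequent refinements, which stop well short of temperedness.

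It is worth recalling that over function fields the analogous statement is a theorem of Drinfeld for $n=2$ and of L.\ Lafforgue for general $n$, obtained by realizing $\pi$ in the cohomology of the moduli stack of shtukas and invoking Deligne's purity theorem; thus the number-field case is the genuine remaining obstacle, and I will not pursue it further here.
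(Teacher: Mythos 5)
You have correctly identified that the statement labeled Conjecture \ref{conj Ramanujan} is the Ramanujan--Petersson conjecture for $\GL_n$, which is an open problem: the paper offers no proof of it, and indeed invokes it only as a \emph{hypothesis} in Proposition \ref{thm conj Clozel 2} and Proposition \ref{thm stronger condition}. So there is no ``paper's own proof'' to compare your argument against, and your decision not to attempt a complete proof is the right one. Your sketch of the expected route --- attaching a compatible system of Galois representations to $\pi$, realizing Satake parameters as Frobenius eigenvalues in cohomology, and deducing temperedness from purity --- is the standard conjectural strategy, and your survey of what is actually known (Deligne for holomorphic forms on $\GL_2/\mathbb{Q}$, the conjugate self-dual cohomological case over CM/totally real fields via unitary Shimura varieties, Luo--Rudnick--Sarnak bounds in general, and Drinfeld/L.~Lafforgue over function fields) is accurate. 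To be clear, none of this constitutes a proof, and you should not record it as one; if anything were to be added to the paper here, it would be a remark explaining that the conjecture is open and pointing to the partial results you cite, not a proof environment.
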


The following is a direct consequence of the above conjecture. Recall that for each place $v$, $\Psi(\mathrm{G}(k_v))$ is the collection of local Arthur parameter of $\mathrm{G}(k_v)$ whose restriction to Weil group $W_{k_v}$ is bounded. 

\begin{lemma}\label{lem Ramanujan}
    Assume Conjecture \ref{conj Ramanujan}. Then any local Arthur parameter of $\mathrm{G}(k_v)$ lies in $\Psi(\mathrm{G}(k_v))$. 
\end{lemma}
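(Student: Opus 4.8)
The plan is to unwind the construction of the localization $\psi_v$ recalled in \S\ref{sec global Arthur packets} and apply the Ramanujan bound to each cuspidal constituent. Here a local Arthur parameter of $\mathrm{G}(k_v)$ is, as throughout \S\ref{sec global Arthur packets}, the localization $\psi_v$ of a global Arthur parameter $\psi = \boxplus_{i=1}^m \mu_i \otimes S_{b_i}$ of $\mathrm{G}(\A_k)$, with each $\mu_i$ an irreducible cuspidal automorphic representation of a general linear group over $k$; by \eqref{eq global A-par localize},
\[
\psi_v = \bigoplus_{i=1}^m \Big( \bigoplus_{j \in I_{\mu_i}} \tau_j \otimes S_{a_j}\Big) \otimes S_{b_i},
\]
where $(\mu_i)_v \simeq \times_{j \in I_{\mu_i}} \St(\tau_j, a_j)$ and, using only the trivial bound towards Ramanujan, one writes $\tau_j = \tau_j' |\cdot|^{x_j}$ with $\tau_j'$ irreducible unitary supercuspidal and $|x_j| < \half{1}$.

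First I would apply Conjecture \ref{conj Ramanujan} to each cuspidal $\mu_i$, which yields that $(\mu_i)_v$ is tempered at every place $v$. Next I would invoke the classification of the tempered dual of a general linear group over $k_v$: the induced representation $\times_{j} \St(\tau_j, a_j)$ is tempered precisely when every $\tau_j$ is unitary, i.e.\ $x_j = 0$ for all $j$. Restricting $\psi_v$ to $W_{k_v}$ then gives a direct sum of copies of the $\tau_j$'s, which has bounded image once each $\tau_j$ is unitary; hence $\psi_v|_{W_{k_v}}$ is bounded. By the definition of $\Psi(\mathrm{G}(k_v))$ recalled above — equivalently, by the vanishing of the exponents $x_i$ in the decomposition \eqref{A-param decomp} — this is exactly the statement $\psi_v \in \Psi(\mathrm{G}(k_v))$.

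I do not expect a genuine obstacle: the lemma is a direct translation of the Ramanujan conjecture for general linear groups through the local Langlands correspondence for $\GL$ and the recipe for $\psi_v$. The only step deserving a line of justification is the equivalence ``$(\mu_i)_v$ tempered $\iff x_j = 0$ for all $j$'', which is standard; at Archimedean places the identical argument applies after deleting the Deligne-$\SL_2(\BC)$ factor.
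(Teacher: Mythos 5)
Your proposal is correct and follows essentially the same route as the paper: apply Conjecture \ref{conj Ramanujan} to each cuspidal $\mu_i$ to get temperedness of $(\mu_i)_v$, deduce that the exponents $x_j$ vanish (equivalently, that the $L$-parameter of $(\mu_i)_v$ is bounded on $W_{k_v}$), and conclude that $\psi_v|_{W_{k_v}}$ is bounded, i.e.\ $\psi_v \in \Psi(\mathrm{G}(k_v))$. The only difference is that you make explicit the standard step ``$(\mu_i)_v$ tempered $\iff$ all $\tau_j$ unitary,'' which the paper compresses into the assertion that the $L$-parameter of a tempered representation has bounded restriction to $W_{k_v}$.
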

\begin{proof}
    Suppose a local Arthur parameter $\psi_v$ comes from the localization of $\psi=\boxplus_{i=1}^m \mu_i \otimes S_{b_i}$. Assuming Conjecture \ref{conj Ramanujan} holds, then $(\mu_{i})_v$ is tempered, and hence its $L$-parameter has bounded image when restricted to $W_{k_v}$. Thus the restriction of 
    \[ \psi_{v}= \bigoplus_{i=1}^m (\mu_{i})_v \otimes S_{b_i}\]
    to $W_{k_v}$ is also bounded. This completes the proof of the lemma.
\end{proof}

Here is the main result of this subsection, which is an application of the results in \S \ref{sec unram classification}.

\begin{prop}\label{thm conj Clozel 2}
    Let $k$ be a number field and $\pi=\otimes_v \pi_v$ be an automorphic representation of $\mathrm{G}_n(\mathbb{A}_k)$ in the discrete spectrum. Assume Conjecture \ref{conj Ramanujan}. Then the following holds. 
\begin{enumerate}
    \item For any finite place $v_0$ such that $\pi_{v}$ is unramified, the Satake parameter of $\pi_v$ is of the form $\phi_{\psi_v}(\mathrm{Frob}_{v}) $ for some $\psi_v \in \Psi(\mathrm{G}(k_v))$. Moreover, write the multi-set of the absolute value of eigenvalues of $\phi_{\psi_v}(\mathrm{Frob}_v)$ as $\{q_v^{w_{v,1}},\ldots, q_v^{w_{v, N}}\}$, where $q_v$ is the cardinality of the residue field of $k_{v}$. Then the multi-set $\{w_{v,1},\ldots, w_{v, N}\}$ is independent of the unramified place $v$.
    \item If there exists a finite place ${v_0}$ such that $\pi_{v_0}$ is unramified and tempered, then every component of $\pi$ is tempered.
\end{enumerate}
\end{prop}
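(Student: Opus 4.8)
The plan is to deduce both parts from Arthur's global classification (Theorem~\ref{thm global Arthur}), the characterization of unramified representations of Arthur type obtained in \S\ref{sec unram classification} (Proposition~\ref{cor unram dual}), and the Ramanujan input of Lemma~\ref{lem Ramanujan}. First I would invoke Theorem~\ref{thm global Arthur} to write $\psi=\boxplus_{i=1}^m \mu_i\otimes S_{b_i}$ for the global Arthur parameter of $\pi$, so that $\pi_v\in\Pi_{\psi_v}$ with $\psi_v=\bigoplus_i (\mu_i)_v\otimes S_{b_i}$ at every place $v$; under Conjecture~\ref{conj Ramanujan}, Lemma~\ref{lem Ramanujan} then guarantees $\psi_v\in\Psi(\mathrm{G}_n(k_v))$, i.e.\ $\psi_v|_{W_{k_v}}$ is bounded, at every place.

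For Part~(1), at any finite place $v$ where $\pi_v$ is unramified, $\pi_v$ is unramified of Arthur type, so Proposition~\ref{cor unram dual} applies and gives $\Psi(\pi_v)=\{\psi_v\}$ and $\pi_v\in\Pi_{\phi_{\psi_v}}$; in particular the $L$-parameter of $\pi_v$ is $\phi_{\psi_v}$, which is therefore unramified, and its Frobenius value $\phi_{\psi_v}(\mathrm{Frob}_v)$ is the Satake parameter of $\pi_v$, with $\psi_v\in\Psi(\mathrm{G}_n(k_v))$ as required. (It is essential here to route through Proposition~\ref{cor unram dual} rather than Corollary~\ref{cor global app unram}, since we need the conclusion at all unramified places, not just almost all.) For the statement about weights I would observe that $\phi_{\psi_v}$ being unramified forces each $(\mu_i)_v$ to be unramified (triviality of $\phi_{\psi_v}$ on inertia and on the Deligne-$\SL_2$ propagates to each summand), and by Ramanujan each $(\mu_i)_v$ is tempered, so its Satake parameter has eigenvalues of absolute value $1$. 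Unwinding $\phi_{\psi_v}(w)=\psi_v\bigl(w,1,\mathrm{diag}(|w|^{1/2},|w|^{-1/2})\bigr)$ at $w=\mathrm{Frob}_v$, the eigenvalues of $\phi_{\psi_v}(\mathrm{Frob}_v)$ are products of a unit-modulus scalar with $q_v^{-j}$ for $j\in\{\tfrac{b_i-1}{2},\tfrac{b_i-3}{2},\dots,-\tfrac{b_i-1}{2}\}$, each occurring with multiplicity $n_i=\dim\mu_i$. Hence $\{w_{v,1},\dots,w_{v,N}\}$ equals $\bigcup_{i=1}^m\{\tfrac{1-b_i}{2},\tfrac{3-b_i}{2},\dots,\tfrac{b_i-1}{2}\}$ with $n_i$-fold multiplicity, which depends only on $\psi$ and not on $v$.

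For Part~(2), the extra hypothesis is that $\pi_{v_0}$ is tempered, so (again by $\pi_{v_0}\in\Pi_{\phi_{\psi_{v_0}}}$ from Proposition~\ref{cor unram dual}) its $L$-parameter $\phi_{\psi_{v_0}}$ is bounded. Since $\psi_{v_0}\in\Psi(\mathrm{G}_n(k_{v_0}))$, writing $\psi_{v_0}$ as a sum of summands $\rho\otimes S_a\otimes S_b$ with $\rho$ bounded, each summand contributes to $\phi_{\psi_{v_0}}|_{W_{k_{v_0}}}$ the twists $|\cdot|^{(b-1)/2},\dots,|\cdot|^{-(b-1)/2}$, which is bounded only when $b=1$; as each $(\mu_i)_{v_0}$ is a nonzero representation, this forces $b_i=1$ for all $i$, i.e.\ $\psi=\boxplus_{i=1}^m\mu_i\otimes S_1$ is a generic (tempered) global parameter. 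Then at every place $v$, $\psi_v=\bigoplus_i (\mu_i)_v\otimes S_1$ has trivial restriction to the Arthur-$\SL_2(\BC)$, and by Ramanujan $\psi_v|_{W_{k_v}}=\bigoplus_i(\mu_i)_v$ is bounded, so $\psi_v$ is a tempered local Arthur parameter; hence $\Pi_{\psi_v}$ is a tempered $L$-packet and $\pi_v\in\Pi_{\psi_v}$ is tempered.

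The step I expect to be the crux is the very first move of each part, namely transferring information from a single place back to the global parameter $\psi$. This is exactly where Proposition~\ref{cor unram dual} does the work: it lets one read off $\phi_{\psi_v}$ from the unramified representation $\pi_v$ in Part~(1), and it forces $\psi$ to be generic from temperedness at the one place $v_0$ in Part~(2). Everything after that is bookkeeping with the definition of $\phi_\psi$ and the trivial Ramanujan bound, so no serious obstacle remains once Proposition~\ref{cor unram dual} is in hand.
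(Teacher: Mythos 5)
Your proof is correct, and Part~(1) runs along essentially the same lines as the paper: invoke Theorem~\ref{thm global Arthur} and Lemma~\ref{lem Ramanujan} to place $\psi_v$ in $\Psi(\mathrm{G}_n(k_v))$, apply Proposition~\ref{cor unram dual} to get $\pi_v\in\Pi_{\phi_{\psi_v}}$ with $\phi_{\psi_v}$ unramified, observe that the Deligne-$\SL_2$ must act trivially in each summand, and then read off the exponent multi-set from the $S_{b_i}$ factors and the Ramanujan bound on the $(\mu_i)_v$.

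Part~(2), however, takes a genuinely different and in fact shorter route than the paper. The paper first uses the uniqueness $\Psi(\pi_{v_0})=\{\psi_{v_0}\}$ from Proposition~\ref{cor unram dual} to conclude that $\psi_{v_0}$ itself is tempered, then argues (via \cite[Theorem 1.5.1(a)]{Art13} and Theorem~\ref{thm Arthur generic}) that the unramified tempered representation $\pi_{v_0}$ is generic, and then calls on Proposition~\ref{thm partial Clozel 4}(1) (i.e., a consequence of the enhanced Shahidi conjecture, Theorem~\ref{thm non-unitary Enhanced Shahidi}) to get $\psi=\boxplus_i\mu_i\otimes S_1$. You instead use only the weaker output $\pi_{v_0}\in\Pi_{\phi_{\psi_{v_0}}}$ of Proposition~\ref{cor unram dual}: temperedness of $\pi_{v_0}$ forces its $L$-parameter $\phi_{\psi_{v_0}}$ to be bounded, and since $\psi_{v_0}|_{W_{k_{v_0}}}$ is already bounded (Lemma~\ref{lem Ramanujan}), the extra $|\cdot|^{\pm(b_i-1)/2}$ twists coming from the Arthur-$\SL_2$ make $\phi_{\psi_{v_0}}$ unbounded unless every $b_i=1$; hence $\psi=\boxplus_i\mu_i\otimes S_1$ directly. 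This bypasses the genericity detour and does not need the enhanced Shahidi conjecture or Proposition~\ref{thm partial Clozel 4} at all in Part~(2) --- it trades a reliance on the deeper structural result (enhanced Shahidi) for a direct reading of boundedness from the formula \eqref{apequ1} for $\phi_\psi$. Both approaches hinge crucially on Proposition~\ref{cor unram dual} at the single place $v_0$, but yours keeps the logical footprint smaller; the paper's has the (minor) virtue of illustrating the generic/tempered dictionary in action. One small point worth being precise about: your claim ``$\phi_{\psi_v}$ unramified forces each $(\mu_i)_v$ unramified'' in Part~(1) is correct because the $S_{b_i}$-twists only contribute unramified characters $|\cdot|^j$, so ramification (or non-trivial $\SL_2^D$-action, i.e.\ $a_j>1$) in $(\mu_i)_v$ would survive into $\phi_{\psi_v}$; you might want to say this in one extra clause so the direct-sum decomposition and the non-cancellation are explicit.
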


\begin{proof}
 Theorem \ref{thm global Arthur} implies that $\pi$ lies in a global Arthur packet $\Pi_{\psi}$, and hence $\pi \in \Pi_{\psi_v}$, where $\psi_v$ is the localization of $\psi$. Write $\psi= \boxplus_{i=1}^m \mu_i\otimes S_{b_i}$ and 
    \begin{align}\label{eq psi_v}
        \psi_v=\bigoplus_{i=1}^m (\mu_i)_v \otimes S_{b_i}= \bigoplus_{i=1}^m \left(\bigoplus_{j \in I_{\mu_i,v}}\tau_j \otimes S_{a_j} \right) \otimes S_{b_i} 
    \end{align}

For Part (1), suppose $\pi_v$ is unramified. Then since $\pi_v \in \Pi_{\psi_v}$, Proposition \ref{cor unram dual} implies that $\pi \in \Pi_{\phi_{\psi_v}}$ and $\phi_{\psi_v}$ is unramified and hence $\phi_{\psi_v}(\mathrm{Frob}_{v})$ gives the Satake parameter of $\pi_v$. Note that assuming Conjecture \ref{conj Ramanujan}, $\psi_v \in \Psi(\mathrm{G}_n(k_v))$ by Lemma \ref{lem Ramanujan}. Next, since $\phi_{\psi_v}$ is unramified, in the right hand side of \eqref{eq psi_v}, we have $a_j=1$ for any $j \in I_{\mu_i,v}$. On the other hand, assuming Conjecture \ref{conj Ramanujan}, Lemma \ref{lem Ramanujan} implies that the eigenvalues of $\tau_j(\mathrm{Frob}_v)$ all have absolute value one. We conclude that for any unramified place $v$,
    \begin{align*}
        \{{w_{v,1}},\ldots, {w_{N,1}}\}&= \sum_{i=1}^m \sum_{j \in I_{\mu_i,v}}{\dim(\tau_j)} \left(\left\{ \half{b_i-1},\half{b_i-3},\ldots, \half{1-b_i} \right\}\right)\\
        & =\sum_{i=1}^m \dim(\mu_i) \left(\left\{ \half{b_i-1},\half{b_i-3},\ldots, \half{1-b_i} \right\}\right),
    \end{align*}
which is independent of the place $v$. Here $\dim(\tau_j)$ and $\dim(\mu_i)$ denotes the multiplicities. This completes the proof of Part (1).

For Part (2), suppose $\pi_{v_0} \in \Pi_{\psi_{v_0}}$ is both unramified and tempered. We have $\Psi(\pi_{v_0})=\{\psi_{v_0}\}$ since $\pi_{v_0}$ is unramified. On the other hand, $\psi_{v_0}$ must be tempered since $\Psi(\pi_{v_0})$ has a tempered member. Next, by \cite[Theorem 1.5.1(a)]{Art13}, the unramified representation $\pi_{v_0}$ must correspond to the trivial character in $\widehat{\mathcal{S}}_{\psi_{v_0}}$. Therefore, $\pi_{v_0}$ is generic by Theorem \ref{thm Arthur generic}. Now Proposition \ref{thm partial Clozel 4}(1) implies that $\psi=\boxplus_{i=1}^m \mu_i\otimes S_1$, and hence for any place $v$,
    \[ \psi_{v}= \bigoplus_{i=1}^m (\mu_i)_v \otimes S_1 \]
    is generic. If we further assume Conjecture \ref{conj Ramanujan}, then the restriction of $(\mu_i)_{v}$ to $W_{k_v}$ is bounded for any $i$, and hence the restriction $\psi_{v} $ to $W_{k_v}$ is also bounded. We conclude that for any place $v$, $\psi_{v}$ is tempered and so is $\pi_{v} \in \Pi_{\psi_v}$. This completes the proof of Part (2) and th proposition.
\end{proof}

 We remark that we did not assume Conjecture \ref{conj Ramanujan} throughout the paper except in the above proof. Especially, it is not assumed in the proof of Proposition \ref{thm partial Clozel 4}(2). On the other hand, if we do assume Conjecture \ref{conj Ramanujan}, then we obtain a stronger conclusion (which directly implies Conjecture \ref{conj Clozel 5 intro}), comparing to Proposition \ref{thm partial Clozel 4}(2).

\begin{prop}\label{thm stronger condition}
    Assume Conjecture \ref{conj Ramanujan}. Suppose that $\pi=\otimes\pi_v$ lies in a global Arthur packet $\Pi_\psi$ of $\mathrm{G}_{n}(\A_k)$ and that there exists a finite place $v_0$ such that $\pi_{v_0}$ is generic. Then $\pi_v$ is tempered for all places $v$. 
\end{prop}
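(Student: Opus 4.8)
The plan is to combine Proposition~\ref{thm partial Clozel 4} with Lemma~\ref{lem Ramanujan} directly. First I would invoke Proposition~\ref{thm partial Clozel 4}(1): since $\pi_{v_0}$ is generic at the finite place $v_0$, the global Arthur parameter must be of the form $\psi = \boxplus_{i=1}^m \mu_i \otimes S_1$, where each $\mu_i$ is an irreducible cuspidal automorphic representation of some $\GL_{n_i}(\A_k)$. Consequently, for \emph{every} place $v$, the localization is
\[
\psi_v = \bigoplus_{i=1}^m (\mu_i)_v \otimes S_1,
\]
which has trivial restriction to the Arthur-$\SL_2(\BC)$; that is, $\psi_v$ is generic for all $v$.

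Next I would bring in the Ramanujan conjecture. By Conjecture~\ref{conj Ramanujan}, each $(\mu_i)_v$ is tempered, so the $L$-parameter of $(\mu_i)_v$ has bounded image on $W_{k_v}$. Since $\psi_v = \bigoplus_i (\mu_i)_v \otimes S_1$ and the $S_1$ factor contributes nothing, the restriction of $\psi_v$ to $W_{k_v}$ is bounded. Combined with the fact that $\psi_v|_{\SL_2^A(\BC)}$ is trivial, this says precisely that $\psi_v$ is a \emph{tempered} local Arthur parameter, i.e. $\phi_{\psi_v}$ is a bounded (tempered) $L$-parameter. Alternatively, one can phrase this step via Lemma~\ref{lem Ramanujan}, which gives $\psi_v \in \Psi(\mathrm{G}_n(k_v))$, and then note that a generic parameter in $\Psi(\mathrm{G}_n(k_v))$ is tempered.

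Finally, since $\pi \in \Pi_\psi$ forces $\pi_v \in \Pi_{\psi_v}$ for all $v$ (by Arthur's construction, recalled in \S\ref{sec global Arthur packets}), and $\Pi_{\psi_v}$ is a tempered local Arthur packet whenever $\psi_v$ is tempered (Theorem~\ref{thm Arthur tempered}), every member of $\Pi_{\psi_v}$ is a tempered representation; in particular $\pi_v$ is tempered. This holds at every place $v$, which is the desired conclusion.

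I do not anticipate a genuine obstacle here: the proposition is essentially a corollary of Proposition~\ref{thm partial Clozel 4}(1) together with the elementary observation (Lemma~\ref{lem Ramanujan}) that the Ramanujan conjecture upgrades ``$\psi_v$ generic'' to ``$\psi_v$ tempered''. The only point requiring a sentence of care is making explicit that a generic local Arthur parameter with bounded restriction to $W_{k_v}$ is tempered in the sense used in \S\ref{sec tempered parametrization}, and hence its packet consists entirely of tempered representations; this is immediate from the definitions but worth stating so the reader sees why temperedness of the parameter transfers to temperedness of $\pi_v$.
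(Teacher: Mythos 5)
Your proof is correct and follows essentially the same route as the paper: invoke Proposition~\ref{thm partial Clozel 4}(1) to get $\psi=\boxplus_i \mu_i\otimes S_1$ (hence each $\psi_v$ is trivial on the Arthur-$\SL_2(\BC)$), then use Conjecture~\ref{conj Ramanujan} via Lemma~\ref{lem Ramanujan} to bound $\psi_v|_{W_{k_v}}$, concluding $\psi_v$ is tempered and therefore so is $\pi_v\in\Pi_{\psi_v}$. Your extra sentence spelling out why a bounded generic parameter is tempered is a slight expansion of the paper's argument, not a different one.
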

\begin{proof}
    By Proposition \ref{thm partial Clozel 4}(1), the global Arthur parameter is of the form $\psi= \boxplus_{i=1}^m \mu_i \otimes S_1$, and hence for any place $v,$ the local Arthur parameter $\psi_v$ is trivial on the Arthur-$\SL_2(\BC)$. On the other hand, assuming Conjecture \ref{conj Ramanujan}, Lemma \ref{lem Ramanujan} implies that $\psi_v$ has bounded image when restricted to $W_F$. We conclude that for any place $v$, $\psi_v$ is tempered, and hence $\pi_{v} \in \Pi_{\psi_v}$ is also tempered. This completes the proof of the proposition.
\end{proof}


\begin{thebibliography}{999999}

% \bibitem[ABV92]{ABV92} J. Adams, D. Barbasch, and D. Vogan, { The Langlands classification and irreducible characters for real reductive groups,} {\it Progress in Mathematics}, vol. 104, Birkh\"auser Boston, Inc., Boston, MA, 1992.

\bibitem[Art89]{Art89} J. Arthur, Unipotent automorphic representations: conjectures. {\em Ast{\'e}risque}. pp. 13-71 (1989), Orbites unipotentes et repr{\' e}sentations, II.

\bibitem[Art13]{Art13}
J. Arthur,
{ The endoscopic classification of representations: Orthogonal and Symplectic groups.}
{\it Colloquium Publication} Vol. \textbf{61}, 2013,
American Mathematical Society.

% \bibitem[Ato17]{Ato17} H. Atobe, On the uniqueness of generic representations in an L-packet. {\em Int. Math. Res. Not. IMRN}. no. 23, 7051-7068 (2017).

\bibitem[Ato22a]{Ato20a}  H. Atobe, On an algorithm to compute derivatives. {\it Manuscripta math.} \textbf{167}, 721–763 (2022).
 
\bibitem[Ato22b]{Ato20b} H. Atobe, Construction of local A-packets.
{\em J. Reine Angew. Math.}, vol. 2022, no. 790, 2022, pp. 1-51.

\bibitem[Ato23]{Ato23} H. Atobe, The set of local $A$-packets containing a given representation. {\em J. Reine Angew. Math.},
vol. 2023, no. 804, 2023, pp. 263-286.

%\bibitem[AM20]{AM20} H. Atobe and A. Mínguez, The explicit Zelevinsky–Aubert duality. Preprint (2020), arXiv:2008.05689.

\bibitem[AM23]{AM20} 
H. Atobe, and A. Mı́nguez, 
The explicit Zelevinsky-Aubert duality. {\em Compos. Math}. \textbf{159}, no. 2, 380-418 (2023).

\bibitem[Aub95]{Aub95} A. Aubert, Dualité dans le groupe de Grothendieck de la catégorie des représentations lisses de longueur finie d'un groupe réductif p-adique. {\em Trans. Amer. Math. Soc.} \textbf{347}, 2179-2189 (1995).

% \bibitem[BZ76]{BZ76} I. Bern\v{s}te\u{\i}n and A. Zelevinski\u{\i}, Representations of the group GL(n,F), where F is a local non-Archimedean field. {\em Uspehi Mat. Nauk}, \textbf{31}, 5-70 (1976).


% \bibitem[CH23]{CH23} D. Ciubotaru, M. Harris, On the generalized Ramanujan and Arthur conjectures over function fields. (2023). 	arXiv:2311.15300.

\bibitem[Che23]{Che23} S.-Y. Chen. Period relations between the Betti-Whittaker periods for GLn under duality. (2023). 	arXiv:2302.04714


\bibitem[Clo07]{Clo07}
L. Clozel, 
“Spectral theory and automorphic forms” in {\it Automorphic Forms and
Applications}, IAS/Park City Math. Ser. 12, Amer. Math. Soc., Providence,
2007, 43--93.

% \bibitem[CM93]{CM93} D. Collingwood, and W. McGovern, {\it Nilpotent orbits in semisimple Lie algebras}. (Van Nostrand Reinhold Co., New York, 1993).

% \bibitem[CFMMX22]{CFMMX22} C. Cunningham, A. Fiori, A. Moussaoui, J. Mracek, and B. Xu, Arthur packets for p-adic groups by way of microlocal vanishing cycles of perverse sheaves, with examples. {\em Mem. Amer. Math. Soc.} \textbf{276} (2022), ix+216.

% \bibitem[GGP20]{GGP20} W. T. Gan, B. H. Gross and D. Prasad, Branching laws for classical groups: The non-tempered case, {\it Compos. Math.} 156(11) (2020), 2298–2367.

\bibitem[HT01]{HT01}M. Harris and R. Taylor, {\it The geometry and cohomology of some simple Shimura varieties}. (Princeton University Press, 2001).

% \bibitem[HJLLZ22]{HJLLZ22} A. Hazeltine, D. Jiang, B. Liu, C. Lo, and Q. Zhang,
% On unitary representations of Arthur type for classical groups. (2022), Preprint.

\bibitem[HLL22]{HLL22} 
A. Hazeltine, B. Liu, C.-H. Lo,
On the intersection of local Arthur packets for classical groups and applications. (2022), Preprint. arXiv:2201.10539.

% \bibitem[HLL23]{HLL23} 
% A. Hazeltine, B. Liu, C. Lo,
% On a precise counting of tempered represetations in local Arthur packets. (2023), Preprint. 

\bibitem[HLLZ22]{HLLZ22} A. Hazeltine, B. Liu, C.-H. Lo, and Q. Zhang,
The closure ordering conjecture on local $L$-parameters in local Arthur packets of classical groups. (2022), Preprint. arXiv:2209.03816.

% \bibitem[HO13]{HO13} V. Heiermann and E. Opdam, On the tempered L-functions conjecture. {\em Amer. J. Math.} \textbf{135}, 777-799 (2013).

\bibitem[Hen00]{Hen00} G. Henniart, Une preuve simple des conjectures de Langlands pour GL(n) sur un corps p-adique. {\em Invent. Math.} \textbf{139}, 439-455 (2000).

% \bibitem[JPSS83]{JPSS83} H. Jacquet, I. Piatetskii-Shapiro, and J. Shalika, Rankin-Selberg convolutions. {\em Amer. J. Math.} \textbf{105}, 367-464 (1983).

\bibitem[Jan97]{Jan97} C. Jantzen, On supports of induced representations for symplectic and odd-orthogonal groups. {\em Amer. J. Math.}. \textbf{119}, 1213-1262 (1997).

\bibitem[Jan14]{Jan14} C. Jantzen, Tempered representations for classical p-adic groups. {\em Manuscripta Math.} \textbf{145}, 319-387 (2014).

% \bibitem[Jan18]{Jan18} C. Jantzen, Jacquet modules and irrreducibility of induced representations for classical p-adic groups. {\em Manuscripta Math.} \textbf{156}, no. 1-2, 23-55 (2018).

% \bibitem[Jia14]{Jia14}
% D. Jiang,
% {\it Automorphic Integral transforms for classical groups I: endoscopy correspondences}.
% Automorphic Forms: L-functions and related geometry: assessing the legacy of I.I. Piatetski-Shapiro, 179-242,
% Comtemp. Math. \textbf{614}, 2014, AMS.


\bibitem[JS03]{JS03}  D. Jiang and D. Soudry, The local converse theorem for $SO(2n+1)$ and applications. \emph{Ann. of Math. (2)} \textbf{157} (2003), no. 3, 743–806. 

\bibitem[JS04]{JS04} D. Jiang and D. Soudry, Generic representations and local Langlands reciprocity law for p-adic $\SO_{2n+1}$, in Contributions to Automorphic Forms, Geometry, and Number Theory, Johns Hopkins Univ. Press, Baltimore, MD, 2004, pp. 457–519. 

\bibitem[JS12]{JS12}D. Jiang and D. Soudry, On the local descent from GL(n) to classical groups. {\em Amer. J. Math.} \textbf{134}, 767-772 (2012).

% \bibitem[Kim05]{Kim05} H. Kim, On local L-functions and normalized intertwining operators. {\em Canad. J. Math.} \textbf{57}, 535-597 (2005).

\bibitem[Kon02]{Kon02} T. Konno, Twisted endoscopy and the generic packet conjecture. {\em Israel J. Math.} \textbf{129} pp. 253-289 (2002).

\bibitem[Kon03]{Kon03} T. Konno, A note on the Langlands classification and irreducibility of induced representations of p-adic groups. {\em Kyushu J. Math.} \textbf{57}, 383-409 (2003).

% \bibitem[LM16]{LM16} E. Lapid and A. Mı́nguez. On parabolic induction on inner forms of the general linear group over a non-archimedean local field. {\em Selecta Math. (N.S.)} \textbf{22}, 2347-2400 (2016).

\bibitem[Liu11]{Liu11} B. Liu, Genericity of representations of p-adic $Sp_{2n}$ and local Langlands parameters. {\em Canad. J. Math.} \textbf{63}, 1107-1136 (2011).

\bibitem[LS22]{LS22} B. Liu and F. Shahidi, Jiang's conjecture on the wave front sets of local Arthur packets, Preprint. (2022). arXiv:2403.11976.

\bibitem[LLS24]{LLS24} B. Liu, C.-H. Lo, and F. Shahidi, 
On the anti-tempered local Arthur packets and a lemma of Arthur, Preprint. (2024). arXiv:2405.17407.

\bibitem[MS20]{MS20}K. Magaard, and G. Savin, Computing finite Galois groups arising from automorphic forms. {\em J. Algebra}. \textbf{561} pp. 256-272 (2020).

\bibitem[M{\oe}06a]{Moe06a}
{C. M{\oe}glin}, 
Paquets d'Arthur pour les groupes classiques; point de vue combinatoire. (2006).
arXiv:math/0610189v1. 

% \bibitem[M{\oe}06b]{Moe06b}
% {C. M{\oe}glin}, 
% Sur certains paquets d'Arthur et involution d'Aubert-Schneider-Stuhler g{\'e}n{\'e}ralis{\'e}e. 
% \emph{Represent. Theory}  {\bf10}, (2006), 86-129.

% \bibitem[M{\oe}08]{Moe08} C. M{\oe}glin, Formes automorphes de carr{\'e} int{\'e}grable non cuspidales. {\em Manuscripta Math.} \textbf{127} (2008), 411-467.


\bibitem[M{\oe}09a]{Moe09a}
{C. M{\oe}glin}, 
\emph{Paquets d'Arthur discrets pour un groupe classique $p$-adique.} 
{\it Automorphic forms and L-functions II. Local aspects}, 179-257, 
\emph{Contemp. Math.}, {\bf489}, \emph{Israel Math. Conf. Proc.}, 
{\it Amer. Math. Soc., Providence, RI}, 2009.

\bibitem[M{\oe}09b]{Moe09b}
C. M{\oe}glin, Comparaison des paramètres de Langlands et des exposants à l'intérieur d'un paquet d'Arthur. {\em J. Lie Theory}. \textbf{19}, 797-840 (2009).

% \bibitem[M{\oe}10]{Moe10}
% {C. M{\oe}glin}, 
% Holomorphie des op{\'e}rateurs d'entrelacement normalis{\'e}s {\`a} l'aide des param{\`e}tres d'Arthur. 
% \emph{Canad. J. Math.} {\bf62} (2010), no.~6, 1340-1386.

\bibitem[M{\oe}11a]{Moe11}
{C. M{\oe}glin}, 
\emph{Multiplicit{\'e} $1$ dans les paquets d'Arthur aux places $p$-adiques.} 
On certain $L$-functions, 333-374, 
Clay Math.~Proc., {\bf13},  Amer.~Math.~Soc., Providence, RI, 2011.

\bibitem[M{\oe}11b]{Moe11b} C. M{\oe}glin, Image des op{\'e}rateurs d'entrelacements normalis{\'e}s et p{\^ o}les des s{\'e}ries d'Eisenstein. {\em Adv. Math.} \textbf{228}  (2011), 1068-1134.


% \bibitem[M{\oe}12]{Moe12} C. M{\oe}glin, Fonctions L de paires pour les groupes classiques. {\em Ramanujan J.} \textbf{28} (2012), 187-209.

% \bibitem[MR17]{MR17}
% {C. M{\oe}glin} and D. Renard,
% Paquets d'Arthur des groupes complexes classiques,
% {\it Contemporary Math.} 691 (2017) p. 203-256.

% \bibitem[MR21]{MR21}
% {C. M{\oe}glin} and D. Renard,
% Sur les paquets d'Arthur de $Sp(2n,\mathbb{R})$ contenant des modules unitaires de plus haut poids, scalaires. (French) [On the Arthur packets of $Sp(2n,\mathbb{R})$ containing highest-weight scalar unit modules] 
% {\it Nagoya Math. J.} 241 (2021), 44–124. 


% \bibitem[MW86]{MW86} C. Mœglin and J. Waldspurger, Sur l'involution de Zelevinski. {\em J. Reine Angew. Math.} \textbf{372} pp. 136-177 (1986).

\bibitem[MW12]{MW12} C. M{\oe}glin and J. Waldspurger, La conjecture locale de Gross-Prasad pour les groupes sp{\' e}ciaux orthogonaux: le cas g{\' e}n{\' e}ral. {\em Ast{\' e}risque}. pp. 167-216 (2012), Sur les conjectures de Gross et Prasad. II.

\bibitem[Mui98]{Mui98} G. Mui{\'c}, On generic irreducible representations of Sp(n,F) and SO(2n+1,F). {\em Glas. Mat. Ser. III}. \textbf{33(53)}, 19-31 (1998).

\bibitem[MT11]{MT11} G. Mui{\'c} and M. Tadi{\' c}, \emph{Unramified unitary duals for split classical p-adic groups; the topology and isolated representations.} On Certain L-functions, { Clay Math. Proc.}, \textbf{13}, { Amer.~Math.~Soc., Providence, RI,} pp. 375-438 (2011).

\bibitem[Sar05]{Sar05}
P. Sarnak, 
{\it Notes on the generalized Ramanujan conjectures.} Harmonic
analysis, the trace formula, and Shimura varieties, 659–685, Clay Math.
Proc., 4, Amer. Math. Soc., Providence, RI, 2005.

\bibitem[Sat63]{Sat63} I. Satake, Theory of spherical functions of reductive groups over p-adic fields, {\em IHES Publ. Math.} {\bf 18}, 1-69 (1963).

\bibitem[Sch13]{Sch13}P. Scholze, The local Langlands correspondence for $\GL_n$ over p-adic fields. {\em Invent. Math.} \textbf{192}, 663-715 (2013).

\bibitem[Sha90]{Sha90} F. Shahidi, A proof of Langlands' conjecture on Plancherel measures; complementary series for p-adic groups. {\em Ann. Of Math. (2)}. \textbf{132}, 273-330 (1990).

% \bibitem[Sha10]{Sha10} F. Shahidi, Eisenstein series and automorphic L-functions. (American Mathematical Society, Providence, RI, 2010).

\bibitem[Sha11]{Sha11} F. Shahidi,
Arthur packets and the Ramanujan
conjecture. 
{\em Kyoto Journal of Mathematics}, Vol. 51, No. 1 (2011), 1--23.

\bibitem[She08]{She08} D. Shelstad, Tempered endoscopy for real groups. III. Inversion of transfer and L-packet structure. {\em Represent. Theory}. \textbf{12} pp. 369-402 (2008).


\bibitem[Tad09]{Tad09} M. Tadić, On reducibility and unitarizability for classical p-adic groups, some general results. {\em Canad. J. Math.}. \textbf{61}, 427-450 (2009).

% \bibitem[Tad14]{Tad14} M. Tadić, Irreducibility criterion for representations induced by essentially unitary ones (case of non-Archimedean GL(n,$\mathscr{A}$)). {\em Glas. Mat. Ser. III}. \textbf{49(69)}, 123-161 (2014).

\bibitem[Var17]{Var17} S. Varma, On descent and the generic packet conjecture. {\em Forum Math.} \textbf{29}, (2017), 111-155.

% \bibitem[Vog93]{Vog93} D. Vogan, The local Langlands conjecture. {\em Representation Theory Of Groups And Algebras}. \textbf{145} (1993), 305-379.

\bibitem[Wal12]{Wal12} J. Waldspurger, La conjecture locale de Gross-Prasad pour les repr{\' e}sentations temp{\' e}r{\' e}es des groupes sp{\' e}ciaux orthogonaux. {\em Ast{\' e}risque}. pp. 103-165 (2012), Sur les conjectures de Gross et Prasad. II.


\bibitem[Xu17a]{Xu17a}B. Xu, On the cuspidal support of discrete series for p-adic quasisplit Sp(N) and SO(N). {\em Manuscripta Math.} \textbf{154}, 441-502 (2017).

\bibitem[Xu17b]{Xu17b} B. Xu, On Mœglin's parametrization of Arthur packets for p-adic quasisplit Sp(N) and SO(N). {\em Canad. J. Math.} \textbf{69}, 890-960 (2017).

% \bibitem[Xu21a]{Xu21a} B. Xu, Nonarchimedean components of non-endoscopic automorphic representations for quasisplit Sp(N) and O(N). {\em Math. Z.} \textbf{297}, 885-921 (2021).

% \bibitem[Xu21b]{Xu21b} B. Xu, A combinatorial solution to Mœglin's parametrization of Arthur packets for p-adic quasisplit Sp(N) and O(N). {\em J. Inst. Math. Jussieu}. \textbf{20}, 1091-1204 (2021).

% \bibitem[Xu21c]{Xu21c} B. Xu, Arthur packets for quasisplit GSp(2n) and GO(2n) over a p-adic field. arXiv:2111.07591.

% \bibitem[Zel80]{Zel80} A. Zelevinsky, Induced representations of reductive $p$-adic groups. II. On irreducible representations of  GL(n). {\em Ann. Sci. École Norm. Sup. (4)} \textbf{13}, 165-210 (1980).

\end{thebibliography}
\end{document}